\documentclass[autodetect-engine]{article}
\usepackage{design_ASC}
\usepackage{bm}

\title{Parallel Surfaces of Cuspidal Cross Caps} 

\author{Atsuki Hiramatsu \\ 
    \footnotesize
    Department of Mathematics, Saitama University\\
    \footnotesize
    255 Shimo-Okubo, Sakura-ku, Saitama 338-8570, Japan\\
    \footnotesize
    E-mail addresses: \texttt{a.hiramatsu.454@ms.saitama-u.ac.jp}
}

\date{\today} 

\newcommand{\Nor}{\bm{\nu}}
\newcommand{\ps}{f^\varepsilon}
\newcommand{\PS}{$M^\varepsilon$\ }
\newcommand{\Lam}{\lambda^\varepsilon}
\newcommand{\Null}{\eta^\varepsilon}
\newcommand{\Locus}{\gamma^\varepsilon}

\numberwithin{equation}{section}

\begin{document}
\setlength{\droptitle}{-5em}    
\maketitle

\begin{abstract}
    This paper investigates the geometry and singularities of parallel surfaces of cuspidal cross caps, the fundamental non-front frontal singularities. We establish a criterion for the degeneracy of the distance squared function in terms of known geometric invariants and describe the resulting configuration of singularities. Our main result demonstrates that while the parallel surface is generically $\mathcal{A}$-equivalent to a cuspidal cross cap, it degenerates into a degenerated cuspidal $S_1$ singularity at specific distances characterized by the equation $C_2(\varepsilon)=0$. These distances act as a novel analogue of the principal radii of curvature. Indeed, although the Gaussian and mean curvatures diverge at the singularity, their asymptotic expansions reveal that their constant terms correspond to the product and average of the reciprocals of these distances, respectively. 
    
\end{abstract}


\section{Introduction}
    Parallel surfaces constitute a classical subject lying at the interface between differential geometry and singularity theory.  
    For a regular surface \(f:(\mathbb R^2,0)\to(\mathbb R^3,0)\) with unit normal vector field \(\bm{\nu}\), the parallel surface at distance \(\varepsilon\) is defined by
    \[
    f^\varepsilon(u,v)=f(u,v)+\varepsilon\bm{\nu}(u,v).
    \]
    It is well known that singularities naturally appear on parallel surfaces at principal radii of curvature. Generic singularities such as cuspidal edges and swallowtails occur on parallels of regular surfaces (see Figure \ref{fig:ExpPS}.), and their geometry has been extensively studied from both differential-geometric and singularity-theoretic viewpoints.
    \begin{figure}[h!]
        \centering
        \includegraphics[width=0.25\linewidth]{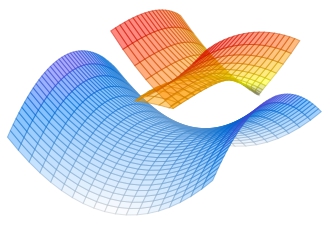}
        \caption{The example of a parallel surface of a blue saddle. We can see cuspidal edges in the orange parallel surface, and there is a swallowtail in the cuspidal edge.}
        \label{fig:ExpPS}
    \end{figure}
    
    A powerful approach to parallel surfaces was introduced by \cite{P} via the distance squared unfolding
    \[
    \Phi:\mathbb R^2\times \mathbb R^3\times \mathbb R\to\mathbb R,\qquad
    \Phi(u,v,x,y,z,\varepsilon)
    =
    \|f(u,v)-(x,y,z)\|^2-\varepsilon^2 .
    \]
    By Huygens' principle, parallel surfaces are realized as discriminant sets of \(\Phi\). Porteous showed that the singularities of \(\Phi\) reflect classical differential-geometric notions: normal directions correspond to \(A_1\)-singularities, principal radii of curvature to \(A_2\)-singularities, ridge points to \(A_3\)-singularities, and umbilics to \(D_4\)-singularities or worse.  
    Furthermore, Fukui and Hasegawa \cite{FukuiHasegawaPS} determined conditions under which the distance squared unfolding of a regular surface is versal with respect to the parameters \((x,y,z,\varepsilon)\), and identified the corresponding singularities of parallel surfaces, including cuspidal edges, swallowtails, cuspidal lips, and degenerate singularities.
    
    Parallel surfaces of fronts can be investigated in essentially the same manner as those of regular surfaces, since the front structure is preserved under parallel displacement. Consequently, the singularities arising on such parallels are again front singularities.  
    In contrast, parallel surfaces of frontals which are not fronts have not been extensively investigated. This situation is particularly interesting from the viewpoint of singularity theory, since genuinely frontal singularities may appear which cannot be detected through the classical theory of regular or front surfaces.
    
    In this paper, we study parallel surfaces of cuspidal cross caps, which are the simplest non-front frontal singularities. Recall that a map germ \(f:(\mathbb R^2,0)\to(\mathbb R^3,0)\) is a frontal if there exists a unit normal vector field \(\bm{\nu}\), while it is a front if \((f,\bm{\nu})\) is an immersion.  
    Typical examples of non-front frontals are the cuspidal \(S_k\)-singularities
    \[
    (u,v)\mapsto \bigl(u,v^2,v^3(u^{k+1}\pm v^2)\bigr),
    \]
    among which the case \(k=0\) is called the cuspidal cross cap.
    \begin{figure}[h!]
        \centering
        \includegraphics[width=0.25\linewidth]{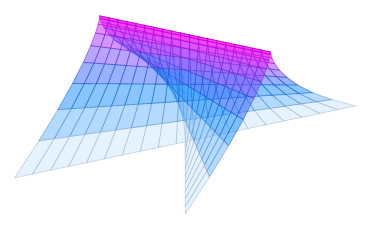}
        \caption{Cuspidal cross cap}
        \label{fig:CCCs}
    \end{figure}
    
    Unlike the regular case, the distance squared unfolding is never versal for cuspidal cross caps. Nevertheless, the degeneracy conditions of the distance squared unfolding still contain essential geometric information. The purpose of this paper is to clarify how such degeneracies control the singularities of parallel surfaces.
    
    Our main result describes when the singularity degenerates on parallels of cuspidal cross caps (Theorem \ref{thm:PSofCCC}). We show that the degeneration distances of parallel surfaces are characterized by the equation 
        $$C_2(\varepsilon):= \varepsilon^2(\tau_0-\theta_1)^2 - \qty(\frac{b_{04}}{3}\varepsilon + 1)(\varepsilon \kappa_0\sin\theta_0 + 1) =0,$$
    where the coefficients $\tau_i,\theta_i,\kappa_i,b_{ij}$ are geometric invariants defined by Section \ref{Sec:2}. Furthermore, give a necessary and sufficient condition for a cuspidal cross cap to degenerate into a degenerated cuspidal $S_1$ singularity (Corollary \ref{cor:PSofcS1}). Using the same calculation, we also show that parallels of cuspidal $S_1$ are $\mathcal{A}$-equivalent to cuspidal $S_1$ when $C_2(\varepsilon)\not=0$ (See Corollary \ref{cor:PSofcS1}.).
    
    We also express the degeneracy conditions of the distance squared unfolding (Lemma \ref{prop:DegConSumm}) in terms of the geometric invariants introduced in \cite{SUY,MS}.
    
    The distances at which parallel surfaces degenerate are naturally recognized as analogues of the principal radii of curvature for regular surfaces. In the case of cuspidal cross caps, although the Gaussian and mean curvatures diverge at the singularity, their asymptotic expansions with respect to the coordinates of Fukui's normal form reveal that the constant terms coincide with the product and average of the reciprocals of these distances, respectively (see Remark \ref{rmk:GaussMean}). Therefore, we regard these distances as a novel type of radii of curvature. Notably, these do not coincide with the radii obtained from the limits of conventional principal curvatures, both of which diverge.
    
    The paper is organized as follows. In Section \ref{Sec:2}, we review the Fukui's normal form for cuspidal edges and adapt this framework to the case of cuspidal cross caps. We then compute several geometric invariants, including the coefficients of the first fundamental form. In Section \ref{Sec:3}, we establish the criteria for the degeneracy of the distance squared unfolding and describe the resulting configuration of singularities. In Section \ref{Sec:4}, we show that a parallel surface of a cuspidal cross cap is, in general, $\mathcal{A}$-equivalent to a cuspidal cross cap. Furthermore, we prove that if the distance $\varepsilon$ satisfies $C_2(\varepsilon) = 0$, the parallel surface becomes $\mathcal{A}$-equivalent to the degenerated $S_1$ singularity introduced in Definition \ref{def:DegCS1}.


\section{Fukui's normal form} \label{Sec:2}
    \subsection{The definition of Fukui's normal form}
        In this section, we recall Fukui's normal form of a cuspidal edge following Fukui \cite{FukuiSW}. Surfaces with Fukui's normal form may have a cuspidal cross cap at the origin.
    
        Let $\gamma : \R, \bm{0} \rightarrow \R^3,\bm{0}$ be a regular curve with arc the length parameter $s$, and the frame $\{ \bm{t}, \bm{n}, \bm{b} \}$ denotes its Frenet-Serret frame. We consider a map-germ $f:\R^2, \bm{0} \rightarrow \R^3, \bm{0}$ representing a singular surface, which satisfies the following conditions:
        
        There is a sequence $\{\bm{f}_k:\R,\bm{0} \rightarrow \R^3,\bm{0}, s\mapsto \bm{f}_k(s) \}_{k=1,2,\dots}$ of $C^\infty$-maps such that
            \begin{itemize}
                \item[\ronumi] for sufficiently large $m \in \N$, the following holds:
                    $$f(s,t) = \gamma(s) + \sum_{k=1}^{m} \bm{f}_k(s) \frac{t^k}{k!} + O^{m+1}(t),\vspace{-0.5em}$$
                \item[\ronumii] the singular set of $f$ is given by $S(f) = \qty{t=0}$, \vspace{-0.5em}
                \item[\ronumiii] for $k=1,2,\dots$, the vector $\bm{f}_k(s)$ are orthogonal to $\gamma'(s)$, where $'$ denote derivative with resopect to the arc length parameter $s$, and \vspace{-0.5em}
                \item[\ronumiv] The quantity $t^2/2$ is an arc length parameter of the restriction of $f$ along $t$ in the $\bm{n}$, $\bm{b}$ plane, that is, 
                    $$\langles{f_t(s,t), f_t(s,t)} = t^2.$$
            \end{itemize}
        By condition \ronumiii{}, we have $\langles{f_t(s,0), f_t(s,0)} = 0$, and $f_t(s,0) = \bm{f}_1(s)$, then we conclude that $\bm{f}_1 = 0$. We remark that the a vector field $\eta = \partial_t$ on $S(f)$, i.e., $df(\eta) = 0$ on $S(f)$. 
    
        When the curvature $\kappa$ of $\gamma$ is not zero, we have the following Frenet-Serret formula for $\gamma$:
            $$\vecTri{\bm{t}'}{\bm{n}'}{\bm{b}'} = \qty(\begin{array}{ccc}
                 0 & \kappa & 0  \\
                 -\kappa & 0 & \tau \\
                 0 & \tau & 0
            \end{array}) \vecTri{\bm{t}}{\bm{n}}{\bm{b}}$$
        Let us define $\theta$ $(0 \leq \theta \leq \pi)$ by $\cos \theta = | \bm{t} \ \bm{b} \ \bm{f}_2|$. This angle $\theta(s)$ represents a deviation of the vector $\bm{f}_2(s)$ from the plane spanned by $\{ \bm{n}(s), \bm{b}(s) \}$. When we denote $\bm{a}_1 = \bm{t}$, $\bm{a}_2 = \cos\theta \bm{n} - \sin \theta \bm{b}$ and $\bm{a}_3 = \bm{a}_1\times\bm{a}_3 = \sin \theta \bm{n} + \cos \theta \bm{b}$, the surface $f(s,t)$ is rewritten as follows in a new orthogonal frame $\{ \bm{a}_1, \bm{a}_2, \bm{a}_3 \}$:
            \begin{equation} \label{eq:FukuiNormalForm}
                \begin{split}
                    f(s,t) &= \gamma(s) + \sum_{k=2}^m \qty{a_k(s) \frac{t^k}{k!}} \bm{a}_2(s) + \qty{\sum_{k=2}^m b_k(s) \frac{t^k}{k!}}\bm{a}_3(s) + O^{m+1}(t)\\
                    &= \gamma(s) + a(s,t)\bm{a}_2(s) + b(s,t)\bm{a}_3(s) + O^{m+1}(t),
                \end{split}
            \end{equation}
        where $a_k(s) = \langles{\bm{f}_k(s), \bm{a}_2(s)}$ and $b_k(s) = \langles{\bm{f}_k(s), \bm{a}_3(s)}$.
    
        \begin{lem}\label{lem:FukuiFormCoeff}[\cite{FukuiSW}, Lemma 1.4.]
            The coefficients $a_k$ ($k \geq 3$) are determined by the lower order terms of $b$ inductively. Precisely speaking, $a_k$ is determined by $b_2, b_3, \dots, b_{k-1}$. In particular, $a_2=1$, $b_2=0$, $a_3=0$, $a_4=-\frac{3}{4}b_3^2$, $a_5=-2b_3b_4$+.
        \end{lem}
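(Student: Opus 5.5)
The plan is to impose condition \ronumiv{}, $\langle f_t,f_t\rangle = t^2$, on the expansion \eqref{eq:FukuiNormalForm} and compare coefficients of powers of $t$. Since $\bm{a}_2,\bm{a}_3$ depend only on $s$, differentiating in $t$ does not touch the frame. Hence
\[
f_t(s,t)=a_t(s,t)\,\bm{a}_2(s)+b_t(s,t)\,\bm{a}_3(s)+O^{m}(t).
\]
Because $\{\bm{a}_2,\bm{a}_3\}$ is orthonormal, condition \ronumiv{} becomes the scalar identity
\[
a_t^2+b_t^2=t^2 .
\]
Here $a_t=\sum_{k\ge2}a_k t^{k-1}/(k-1)!$ and $b_t=\sum_{k\ge2}b_k t^{k-1}/(k-1)!$. (We already know $\bm{f}_1=0$, so both series start at $k=2$.)

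First I fix the normalization at order $t^2$. The coefficient of $t^2$ gives $a_2^2+b_2^2=1$. By construction $\bm{a}_2$ is the unit vector in the $\bm{n}\bm{b}$-plane along $\bm{f}_2$. Indeed, $\cos\theta=|\bm{t}\ \bm{b}\ \bm{f}_2|$ measures the $\bm{n}$-component, and $\bm{f}_2\perp\bm{t}$ by \ronumiii{}. So $\bm{f}_2$ is parallel to $\bm{a}_2$, which forces $b_2=\langle\bm{f}_2,\bm{a}_3\rangle=0$ and then $a_2=1$. Choosing the sign of the parameter makes $a_2=1$ rather than $-1$.

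Next comes the inductive step. The coefficient of $t^{k}$ in $a_t^2+b_t^2$, for $k\ge3$, equals $2a_2a_{k}/(k-1)!$ plus a polynomial in $a_3,\dots,a_{k-1}$ and $b_2,\dots,b_{k-1}$. Since $a_2=1$, setting this coefficient to $0$ solves for $a_{k}$ linearly in terms of strictly earlier coefficients. By induction on $k$, each $a_k$ is a polynomial in $b_2,\dots,b_{k-1}$ alone.

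Finally I read off the low orders explicitly.
\begin{itemize}
\item At $t^3$: $2a_3=0$, so $a_3=0$.
\item At $t^4$: $2\cdot a_4/3!+(b_3/2)^2=0$ (using $a_3=0$, $b_2=0$), so $a_4=-\tfrac34 b_3^2$.
\item At $t^5$: $2a_5/4!+2\cdot(b_3/2)(b_4/3!)=0$, so $a_5=-2b_3b_4$.
\end{itemize}
The only delicate point is the first step, namely justifying $b_2=0$ and $a_2=1$ from the definition of $\theta$. I would check the sign conventions in the definition of $\bm{a}_2$ against $\cos\theta$ carefully. Everything after that is routine bookkeeping of Taylor coefficients.
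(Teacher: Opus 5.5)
Your proposal is correct and is essentially the paper's proof. Both compare coefficients of $t^k$ in $\langle f_t,f_t\rangle=t^2$; the paper writes these as inner products $\langle\bm{f}_i,\bm{f}_j\rangle$, you as $a_t^2+b_t^2$ in the orthonormal pair $\bm{a}_2,\bm{a}_3$. Both use $\bm{a}_2=\bm{f}_2$ to get $a_2=1$, $b_2=0$, then solve each higher coefficient linearly for $a_k$.

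Two small fixes:
\begin{itemize}
\item The sign $a_2=+1$ cannot be arranged by reversing $t$, since $t\mapsto -t$ leaves $\bm{f}_2$ (the coefficient of $t^2/2$) unchanged. It comes from the convention that $\theta$ is chosen so that $\bm{a}_2=\bm{f}_2$, which is what the paper assumes.
\item The $t^3$ coefficient is $2a_2a_3/2!=a_3$, not $2a_3$. This still gives $a_3=0$.
\end{itemize}
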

    
        \begin{proof}
            From the condition \ronumi{}, we have
                $$t^2 = \langles{f_t, f_t} = \langles{\bm{f}_2, \bm{f}_2} t^2 + \langles{\bm{f}_2, \bm{f}_3} t^3 + \sum_{k=4}^m \qty{2\frac{\langles{\bm{f}_2, \bm{f}_k}}{(k-1)!} + \sum_{k=2}^{k-2} \frac{\langles{\bm{f}_{i+1}, \bm{f}_{k-i+1}}}{i!(k-i)!}}t^k.$$
            Since $\bm{a}_2=\bm{f}_2$ and $\bm{f}_k$ is written by $a_k \bm{a}_2 + b_k \bm{a}_3$, in particular $\bm{f}_2 = a_2 \bm{a}_2 + b_2 \bm{a}_3$ and $\bm{f}_3 = a_3 \bm{a}_2 + b_3 \bm{a}_3$, we obtain $a_2=1$, $b_2=0$, $a_3 = \langles{\bm{f}_3, \bm{a}_2} = \langles{\bm{f}_3, \bm{f}_2} = 0$ and $a_k = \langles{\bm{f}_k, \bm{f}_2}$. Therefore $a_k$ ($k\geq 3$) are determined inductively by $b_3, b_4, \dots b_{k-1}$, inductively.
        \end{proof}
    
        The functions $b_k(s)$ are known as invariants of cuspidal edges, under actions by orientation-preserving diffeomorphisms of the source that preserve the singular curves with its orientation, together with rotation of $\R^3$ (see \cite[Remark 1.5.]{FukuiSW}). In addition, $b_k$ appears in the criterion for a cuspidal edge, a cuspidal cross cap, ..., etc.
    
        \begin{fact}[\cite{FukuiSW}, Proposition 1.6.] \label{fact:INV}
            Let $f:\R^2,\bm{0} \rightarrow \R^3,\bm{0}$ be a map as in the first paragraph in this section. We have that \vspace{-0.5em}
            \begin{itemize}
                \item the singularity of $f$ is cuspidal edge if $b_3(0) \not = 0$, and \vspace{-0.5em}
                \item the singularity of $f$ is cuspidal cross cap if $b_3(0) = 0$, $b_3'(0) \not= 0$.
            \end{itemize}
        \end{fact}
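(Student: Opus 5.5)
The plan is to reduce the recognition question to the known criteria for cuspidal edges and cuspidal cross caps of frontals, written in Fukui's normal form. By condition (ii) the singular set is $\{t=0\}$, and $\eta=\partial_t$ is a null vector field along it. The steps are: compute a unit normal $\Nor$, find a signed area density $\lambda$, evaluate the functions $\phi$ and $\psi$ that test whether $f$ is a front, and then apply the criteria of Kokubu--Rossman--Saji--Umehara--Yamada and Fujimori--Saji--Umehara--Yamada. For cuspidal edges the criterion is: $f$ is a front at $0$, $d\lambda\neq0$, and $\eta\lambda(0)\neq0$. For cuspidal cross caps it is: $d\lambda\neq0$, $\eta\lambda(0)\neq0$, $\psi(0)=0$, and $\psi'(0)\neq0$, where $\psi(s)=\det(\gamma',\Nor,d\Nor(\eta))(s,0)$.

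First, differentiate \eqref{eq:FukuiNormalForm} using Lemma \ref{lem:FukuiFormCoeff}. Since $a_2=1$, $b_2=0$ and $a_3=0$, we get
\[
f_t = t\,\bm{a}_2 + \Bigl(\tfrac{b_3}{2}t^2+\tfrac{b_4}{6}t^3\Bigr)\bm{a}_3 + O(t^3)\bm{a}_2 + O(t^4).
\]
So $f_t = t\,h$ with $h(s,0)=\bm{a}_2$. Moreover $f_s(s,0)=\bm{t}=\bm{a}_1$, and $f_s = \bm{a}_1 + O(t^2)$. Hence
\[
\Nor = \frac{f_s\times h}{|f_s\times h|} = \bm{a}_3 - \tfrac{b_3}{2}t\,\bm{a}_2 + O(t^2)
\]
is a smooth unit normal, and $f$ is a frontal. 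The signed area density can be taken as
\[
\lambda = \det(f_s,f_t,\Nor) = t\,\det(f_s,h,\Nor) = t\,(1+O(t)).
\]
Therefore $d\lambda(0)\neq0$ and $\eta\lambda(0)=1\neq0$, so the singularity is non-degenerate of the first kind for every choice of the $b_k$.

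Next, compute $d\Nor(\eta)=\Nor_t$ along $t=0$. From the expansion above, $\Nor_t(s,0) = -\tfrac{b_3(s)}{2}\bm{a}_2 + c(s)\bm{a}_3$. The $\bm{a}_3$-component $c(s)$ vanishes because $|\Nor|=1$, so $\Nor_t(s,0)=-\tfrac{b_3(s)}{2}\bm{a}_2$. Then
\[
\psi(s) = \det\bigl(\bm{a}_1,\bm{a}_3,-\tfrac{b_3}{2}\bm{a}_2\bigr) = \tfrac{b_3(s)}{2}
\]
up to sign. The front condition at a non-degenerate point with $\eta$ transverse to $S(f)$ is that $(df,d\Nor)$ is injective on $\eta$, i.e.\ $\Nor_t(0)\neq0$, which is equivalent to $b_3(0)\neq0$. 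This gives the first bullet via the cuspidal edge criterion. For the second bullet, $\psi(0)=0$ holds if and only if $b_3(0)=0$, and $\psi'(0)=\pm\tfrac12 b_3'(0)\neq0$. The cuspidal cross cap criterion then applies.

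The main obstacle is checking carefully that the $O(t^2)$ corrections in $f_s$ do not affect $\Nor_t(s,0)$, and that the sign and scale of $\psi$ are as stated. The key point is that $\gamma$, $\bm{a}_2$ and $\bm{a}_3$ depend on $s$, so $f_s$ contains terms like $\tfrac{t^2}{2}\bm{a}_2'$. However, these terms are $O(t^2)$, so they cannot contribute to $\partial_t\Nor$ at $t=0$. I would verify this by expanding $f_s\times h$ to first order in $t$ explicitly.
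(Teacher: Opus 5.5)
Your proof is correct and follows essentially the route the paper has in mind: the paper only cites Appendix B of Fukui for this fact, but it carries out the same computation in its proof of Proposition \ref{prop:DegCSk}. There it finds $d\bm{\nu}(\eta)|_{S(f)}=-\frac{b_3}{2}\bm{a}_2$, hence $\psi=b_3/2$, and feeds this into the known criteria, just as you do. Your closing worry about the $O(t^2)$ terms in $f_s$ is already settled by your first-order expansion $f_s\times h=\bm{a}_3-\frac{b_3}{2}t\,\bm{a}_2+O(t^2)$, and in fact $\psi=+b_3/2$ exactly, so there is no sign ambiguity.
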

        
        \begin{proof}
            See Appendix B in \cite{FukuiSW}.
        \end{proof}
    
        \begin{prop}\label{prop:DegCSk}
            Let $f:\R^2,\bm{0} \rightarrow \R^3,\bm{0}$ be a map as in the first paragraph in this section. The singularity of $f$ is the cuspidal $S_1$-singularity at $\bm{0}$ if $b_3(0) = b_3'(0) = 0$, $b_5(0)b_3^{(2)}(0) \not= 0$.
        \end{prop}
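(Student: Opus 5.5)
We will use a recognition criterion for the cuspidal $S_1$-singularity rather than construct the coordinate changes by hand. The criterion we have in mind is the one of Saji (``Criteria for cuspidal $S_k$ singularities''). Let $f$ be a frontal with unit normal $\bm{\nu}$, and suppose $0$ is a non-degenerate singular point with null vector field $\eta$. Suppose further that $\xi$ is a vector field tangent to $S(f)$ and $\tilde\eta$ is an extension of $\eta$. Then $f$ at $0$ is $\mathcal{A}$-equivalent to the cuspidal $S_1$-singularity provided the following hold:
\begin{itemize}
\item[(a)] $\eta\lambda(0)\neq 0$, where $\lambda$ is an identifier of singularities;
\item[(b)] $f$ is of cuspidal-edge type along $S(f)$, in the sense that $\xi f$ and $\tilde\eta^2 f$ are linearly independent and $\tilde\eta^3 f(0)\in\langle \xi f,\tilde\eta^2 f\rangle$;
\item[(c)] the function $\phi(s)=\det(\xi f,\tilde\eta^2 f,\tilde\eta^3 f)$ restricted to $S(f)$ satisfies $\phi(0)=\phi'(0)=0$ and $\phi''(0)\neq0$;
\item[(d)] a non-degeneracy condition involving the next derivative, $\tilde\eta^5 f$, holds. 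This is the condition that separates $S_1$ from worse cuspidal singularities.
\end{itemize}
The aim is to compute all of these quantities directly from the Fukui normal form \eqref{eq:FukuiNormalForm}, with $\xi=\partial_s$ and $\eta=\partial_t$.

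First we record the derivatives. By Lemma~\ref{lem:FukuiFormCoeff}, $a=t^2/2+O(t^4)$ and $b=b_3t^3/6+b_4t^4/24+b_5t^5/120+\cdots$. Then
\[
f_t=t\,\bm{a}_2+\tfrac{b_3}{2}t^2\bm{a}_3+O(t^3),\qquad
f_s=\bm{t}+O(t^2).
\]
Hence $f_s\times f_t=t\,\bm{\mu}(s,t)$ with $\bm{\mu}(s,0)=\bm{a}_3\neq0$. So $\lambda=t$ (up to a unit), $S(f)=\{t=0\}$, $\eta\lambda=1$, and $f$ is a frontal with $\bm{\nu}=\bm{\mu}/|\bm{\mu}|$. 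This settles (a).

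Along $t=0$ we have $\partial_t^2f=\bm{a}_2$ and $\partial_t^3f=b_3\bm{a}_3$. Therefore
\[
\phi(s)=\det(\bm{t},\bm{a}_2,b_3\bm{a}_3)=b_3(s),
\]
and the hypothesis $b_3(0)=b_3'(0)=0$, $b_3''(0)\neq0$ is exactly condition (c). Condition (b) follows at $s=0$ because $b_3(0)=0$.

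The remaining condition (d) is where $b_5$ enters, and it is the main obstacle. Saji's criterion for $S_k$ with $k\geq1$ requires extra data, and the pure $t$-derivatives are not enough: the term $b_4$ must be eliminated. I would first try to reduce to a model directly. Expand $f$ to order $t^5$ in the frame $\{\bm{a}_1,\bm{a}_2,\bm{a}_3\}$, change target coordinates using the frame, and change source coordinates by $s\mapsto s$, $t\mapsto$ (coordinate with $t^2/2=v^2$). This brings $f$ into the form
\[
\bigl(u,\;v^2,\;v^3\,\beta(u)+v^4\,\delta(u)+v^5\,\rho(u)+\cdots\bigr),
\]
with $\beta=b_3$ up to a unit, $\rho(0)$ proportional to $b_5(0)$, and curvature contributions going into the first two components.

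The $v^4$ term $\delta(u)v^4=\delta(u)(v^2)^2$ is a function of the second coordinate and can be removed by a target diffeomorphism. The same holds for the curvature terms, which are functions of $(u,v^2)$ modulo terms already absorbed. What remains is
\[
\bigl(u,\;v^2,\;v^3(c\,u^2+\cdots)+\rho(0)\,v^5+\cdots\bigr).
\]
Then the finite-determinacy result for cuspidal $S_1$ (the $S_1$ singularity $(u,v^2,v(u^2\pm v^2))$ is $\mathcal{A}$-finitely determined, and hence so is its cuspidal lift) gives the normal form, provided $b_3''(0)b_5(0)\neq0$.

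The delicate points in this last step are two. First, $b_3'(0)=0$ must ensure that there is no $uv^3$ term. Second, the mixed terms $u\,v^5$ and $v^3u^3$ must be shown to be of higher order in the weighted sense with weights $(1,1)$ on $(u,v)$ relevant for $S_1$. Alternatively, the condition $b_5(0)b_3''(0)\neq0$ can be checked against the invariant formulation in Saji's criterion, which is expressed through $\det(\xi f,\tilde\eta^2f,\tilde\eta^5 f)$ after correcting $\tilde\eta$ so that the $\tilde\eta^4$ contribution is killed. Here that determinant equals $b_5$ plus terms that vanish at $0$.
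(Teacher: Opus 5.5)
You handle the non-degeneracy condition and the conditions on $b_3$ correctly, and this matches the paper's use of Saji's criterion. The paper works with $\psi=\det(\hat\gamma',\bm{\nu},d\bm{\nu}(\eta))$, which in Fukui's form equals $b_3(s)/2$ along $S(f)$, so your $\phi=b_3$ is just $2\psi$ there. The gap is the step that uses $b_5(0)\neq0$, which you leave as an unspecified condition (d). Your main route for it, normalizing to $(u,v^2,v^3\beta+v^4\delta+v^5\rho+\cdots)$ and appealing to finite determinacy, cannot work. The cuspidal $S_1$ germ $(u,v^2,v^3(u^2\pm v^2))$ is \emph{not} $\mathcal{A}$-finitely determined: at every point $(u,0)$ with $u\neq0$ it is a cuspidal edge, which is an unstable germ $\mathbb{R}^2\to\mathbb{R}^3$, whereas a finitely determined germ has an isolated instability (Mather--Gaffney). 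So finite determinacy of the $S_1$ germ $(u,v^2,v(u^2\pm v^2))$ does not pass to its cuspidal version. Even a correct jet normalization would not finish the proof without a recognition result valid within the class of frontals. (The normalization is also more delicate than substituting $t=\sqrt2\,v$, because the moving frame $\bm{a}_i(s)$ puts odd powers of $t$ into the first two components.)

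The missing ingredient is the second condition of Saji's criterion, stated precisely. There must be a curve $c$ with $c'(0)$ parallel to $\eta(0)$ such that $\hat c=f\circ c$ satisfies $\hat c'(0)=0$, $\hat c''(0)\neq0$, $\hat c'''(0)=l\,\hat c''(0)$ for some $l\in\mathbb{R}$, and $\det\bigl(\hat\gamma',\hat c'',3\hat c^{(5)}-10l\,\hat c^{(4)}\bigr)(0)\neq0$. No ``correction of $\tilde\eta$'' is needed. Take $c(x)=(0,x)$; the paper uses $(\frac{c_2}{2}x^2+O(x^6),x)$, to the same effect. Because $b_3(0)=0$, Lemma~\ref{lem:FukuiFormCoeff} gives $a_4(0)=a_5(0)=0$, so
\[
\hat c(x)=\frac{x^2}{2}\bm{a}_2(0)+\Bigl(\frac{b_4(0)}{24}x^4+\frac{b_5(0)}{120}x^5\Bigr)\bm{a}_3(0)+O(x^6).
\]
Hence $\hat c'''(0)=0$ and $l=0$. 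The $b_4$-term enters only through $\hat c^{(4)}$, which is multiplied by $l=0$. The determinant is $\det(\bm{a}_1,\bm{a}_2,3b_5(0)\bm{a}_3)=3b_5(0)\neq0$. Together with your (a) and (c), this is exactly the paper's proof.
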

    
        \begin{proof}
            The proof is given later when we compute the Weingarten matrix in Fukui's form.
        \end{proof}

    \subsection{Taylor expansions of some notions in Fukui's normal form}
        \subsubsection{Taylor expansions of Fukui's frame and the singular locus.}
            Fukui's frame $\bm{a}_1,\bm{a}_2,\bm{a}_3$ holds a formula like Frenet-Seret Theorem.
            $\gamma$
            
            \begin{lem}\label{lem:SeFrType}
                Assume the curvature $\kappa$ of the singular locus $\gamma$ is not zero. We have
                    $$\vecTri{\bm{a}'_1}{\bm{a}'_2}{\bm{a}'_3} = \matrixTT{0}{\kappa\cos\theta}{\kappa\sin\theta}{-\kappa\cos\theta}{0}{\tau-\theta'}{-\kappa\sin\theta}{-(\tau-\theta')}{0}\vecTri{\bm{a}_1}{\bm{a}_2}{\bm{a}_3}$$
            \end{lem}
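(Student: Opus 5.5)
The plan is a direct computation: differentiate the definitions $\bm{a}_1=\bm{t}$, $\bm{a}_2=\cos\theta\,\bm{n}-\sin\theta\,\bm{b}$, $\bm{a}_3=\sin\theta\,\bm{n}+\cos\theta\,\bm{b}$ with respect to $s$, substitute the Frenet--Serret formulas for $\gamma$, and re-express the result in the frame $\{\bm{a}_1,\bm{a}_2,\bm{a}_3\}$. The hypothesis $\kappa\neq0$ is used only to make the Frenet frame, and hence $\theta(s)$ and the $\bm{a}_i$, well defined and smooth.

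\textbf{Sign convention.} I take the Frenet--Serret formulas in the standard form
\[
\bm{t}'=\kappa\bm{n},\qquad \bm{n}'=-\kappa\bm{t}+\tau\bm{b},\qquad \bm{b}'=-\tau\bm{n}.
\]
The matrix displayed earlier has $+\tau$ in its $(3,2)$ entry. That is presumably a typo: the matrix must be skew-symmetric because the frame is orthonormal, and the claimed result is only consistent with $\bm{b}'=-\tau\bm{n}$. I will also use the inverse relations
\[
\bm{n}=\cos\theta\,\bm{a}_2+\sin\theta\,\bm{a}_3,\qquad \bm{b}=-\sin\theta\,\bm{a}_2+\cos\theta\,\bm{a}_3,
\]
which follow from inverting the rotation by angle $\theta$ in the normal plane.

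\textbf{First row.} We have $\bm{a}_1'=\kappa\bm{n}=\kappa\cos\theta\,\bm{a}_2+\kappa\sin\theta\,\bm{a}_3$.

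\textbf{Second row.} Differentiating gives
\[
\bm{a}_2'=-\theta'\sin\theta\,\bm{n}+\cos\theta\,\bm{n}'-\theta'\cos\theta\,\bm{b}-\sin\theta\,\bm{b}'.
\]
After substituting the Frenet formulas and collecting terms, this becomes
\[
\bm{a}_2'=-\kappa\cos\theta\,\bm{t}+(\tau-\theta')(\sin\theta\,\bm{n}+\cos\theta\,\bm{b}).
\]
Since $\sin\theta\,\bm{n}+\cos\theta\,\bm{b}=\bm{a}_3$, this is $-\kappa\cos\theta\,\bm{a}_1+(\tau-\theta')\bm{a}_3$.

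\textbf{Third row.} In the same way,
\[
\bm{a}_3'=-\kappa\sin\theta\,\bm{t}-(\tau-\theta')(\cos\theta\,\bm{n}-\sin\theta\,\bm{b})=-\kappa\sin\theta\,\bm{a}_1-(\tau-\theta')\bm{a}_2.
\]

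\textbf{Consistency check.} Because $\{\bm{a}_i\}$ is orthonormal, the coefficient matrix must be skew-symmetric. The entries $\kappa\cos\theta$, $\kappa\sin\theta$ and $\tau-\theta'$ therefore determine the whole matrix, and this cross-checks the second and third rows against the first.

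The only real obstacle is the bookkeeping of signs, both in the convention for $\bm{b}'$ and in the definition $\bm{a}_3=\bm{a}_1\times\bm{a}_2$ (the text writes $\bm{a}_1\times\bm{a}_3$, evidently a typo). Once these are fixed as above, the computation is routine.
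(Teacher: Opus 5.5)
Your proof is correct and is essentially the paper's argument: the paper writes $(\bm{a}_1,\bm{a}_2,\bm{a}_3)^t$ as the rotation matrix in $\theta$ applied to $(\bm{t},\bm{n},\bm{b})^t$, differentiates using the Frenet--Serret formulas, and multiplies back by the inverse rotation, which is the matrix form of your row-by-row computation. Your convention $\bm{b}'=-\tau\bm{n}$ is the one the paper actually uses inside its proof, so you are right to treat the $+\tau$ entry in the earlier display, and $\bm{a}_1\times\bm{a}_3$, as typos.
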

    
            \begin{proof}
                Since $\vecTri{\bm{a}_1}{\bm{a}_2}{\bm{a}_3} = \qty(\begin{array}{ccc}
                    1 & 0 & 0 \\
                    0 & \cos\theta & -\sin\theta \\
                    0 & \sin\theta & \cos\theta
                \end{array})\vecTri{\bm{t}}{\bm{n}}{\bm{b}}$, $\vecTri{\bm{t}}{\bm{n}}{\bm{b}} = \qty(\begin{array}{ccc}
                    1 & 0 & 0 \\
                    0 & \cos\theta & -\sin\theta \\
                    0 & \sin\theta & \cos\theta
                \end{array})^{-1}\vecTri{\bm{a}_1}{\bm{a}_2}{\bm{a}_3}$ and $\qty(\begin{array}{ccc}
                    1 & 0 & 0 \\
                    0 & \cos\theta & -\sin\theta \\
                    0 & \sin\theta & \cos\theta
                \end{array})^{-1}=\matrixTT{1}{0}{0}{0}{\cos\theta}{\sin\theta}{0}{-\sin\theta}{\cos\theta}$, we have
                    $$\begin{array}{rcl}
                        \vecTri{\bm{a}_1'}{\bm{a}_2'}{\bm{a}_3'} &=& \theta'\qty(\begin{array}{ccc}
                            0 & 0 & 0 \\
                            0 & -\sin\theta & -\cos\theta \\
                            0 & \cos\theta & -\sin\theta
                        \end{array})\vecTri{\bm{t}}{\bm{n}}{\bm{b}} + \qty(\begin{array}{ccc}
                            1 & 0 & 0 \\
                            0 & \cos\theta & -\sin\theta \\
                            0 & \sin\theta & \cos\theta
                        \end{array})\qty(\begin{array}{ccc}
                             0 & \kappa & 0  \\
                             -\kappa & 0 & \tau \\
                             0 & -\tau & 0
                        \end{array}) \vecTri{\bm{t}}{\bm{n}}{\bm{b}}\\
                     &=& \matrixTT{0}{\kappa}{0}{-\kappa\cos\theta}{\sin\theta(\tau - \theta')}{\cos\theta(\tau - \theta')}{-\kappa\sin\theta}{-\cos\theta(\tau-\theta')}{\sin\theta(\tau-\theta')}\matrixTT{1}{0}{0}{0}{\cos\theta}{\sin\theta}{0}{-\sin\theta}{\cos\theta}\vecTri{\bm{a}_1}{\bm{a}_2}{\bm{a}_3}\\
                     &=& \matrixTT{0}{\kappa\cos\theta}{\kappa\sin\theta}{-\kappa\cos\theta}{0}{\tau-\theta'}{-\kappa\sin\theta}{-(\tau-\theta')}{0}\vecTri{\bm{a}_1}{\bm{a}_2}{\bm{a}_3}.
                    \end{array}$$
            \end{proof}
    
            We compute $\bm{a}_1,\bm{a}_2,\bm{a}_3$ using Picard iteration. Taylor expansions of $\bm{a}_1,\bm{a}_2,\bm{a}_3$ are obtained to calculate recurrence relations. As a initial condition, a direction of the singular locus at the origin set $\bm{t}=(1,0,0)$, $\bm{n}=(0,1,0)$ and $\bm{b}=(0,0,1)$, then $\bm{a}_1(0),\bm{a}_2(0),\bm{a}_3(0)$ are calculated for the definition.
                $$\vecTri{\bm{a}_1(0)}{\bm{a}_2(0)}{\bm{a}_3(0)}  =  \qty(\begin{array}{ccc}
                    1 & 0 & 0 \\
                    0 & \cos\theta(0) & -\sin\theta(0) \\
                    0 & \sin\theta(0) & \cos\theta(0)
                \end{array})\vecTri{\bm{t}}{\bm{n}}{\bm{b}} = \matrixTT{1}{0}{0}{0}{\cos\theta_0}{-\sin\theta_0}{0}{\sin\theta_0}{\cos\theta_0}$$
            We assume a sequence $\qty{(\bm{a}_1,\bm{a}_2,\bm{a}_3)^t_n}_{n=1}^\infty$ converges to $(\bm{a}_1,\bm{a}_2,\bm{a}_3)^t$ and $(\bm{a}_1,\bm{a}_2,\bm{a}_3)^t_1 = (\bm{a}_1(0),\bm{a}_2(0),\bm{a}_3(0))^t$. Then, the next recurrence relations follow from the Lemma \ref{lem:SeFrType}.
                $$\frac{d}{ds}\vecTri{\bm{a}_1}{\bm{a}_2}{\bm{a}_3}_{n+1} = \matrixTT{0}{\kappa\cos\theta}{\kappa\sin\theta}{-\kappa\cos\theta}{0}{\tau-\theta'}{-\kappa\sin\theta}{-(\tau-\theta')}{0}\vecTri{\bm{a}_1}{\bm{a}_2}{\bm{a}_3}_n$$
            We rewrite the recurrence relations as a integrate form.
                $$\vecTri{\bm{a}_1}{\bm{a}_2}{\bm{a}_3}_{n+1} = \qty(\begin{array}{ccc}
                1 & 0 & 0 \\
                0 & \cos\theta(0) & -\sin\theta(0) \\
                0 & \sin\theta(0) & \cos\theta(0)
                \end{array}) + \int_0^s\matrixTT{0}{\kappa\cos\theta}{\kappa\sin\theta}{-\kappa\cos\theta}{0}{\tau-\theta'}{-\kappa\sin\theta}{-(\tau-\theta')}{0}\vecTri{\bm{a}_1}{\bm{a}_2}{\bm{a}_3}_n ds$$
            Calculating these recurrence relations, we obtain an asymptotic expression of $\bm{a}_2$ and $\bm{a}_3$.
                \begin{equation}\label{eq:a2}
                    \begin{split}
                        j^3\bm{a}_2 &= \vecTri{0}{\cos\theta_0}{-\sin\theta_0} + \vecTri{-\kappa_0\cos\theta_0}{(\tau_0-\theta_1)\sin\theta_0}{(\tau_0-\theta_1)\cos\theta_0}s - \vecTri{\kappa_0(\tau_0-2\theta_1)\sin\theta_0+\kappa_1\cos\theta_0}{-(\tau_1-\theta_2)\sin\theta_0+\qty{(\tau_0-\theta_1)^2+\kappa_0^2}\cos\theta_0}{-(\tau_0-\theta_1)^2\sin\theta_0-(\tau_1-\theta_2)\cos\theta_0}\frac{s^2}{2}\\
                        & + \vecTri{-\qty{\kappa_0(2\tau_1-3\theta_2)} \sin\theta_0 + \qty(\kappa_0\qty{\kappa_0^2+
                        \tau_0^2-3\theta_1(\tau_0-\theta_1)}-\kappa_2) \cos\theta_0}
                        {-\qty{(\tau_0-\theta_1)^3+\kappa_0^2(\tau_0-3\theta_1)-(\tau_2-\theta_3)} \sin\theta_0 - 3\qty{\tau_1(\tau_0-\theta_1)-\theta_2(\tau_0-\theta_1)+\kappa_0\kappa_1} \cos\theta_0}
                        {3(\tau_0-\theta_1)(\tau_1-\theta_2) \sin\theta_0 - \qty{(\tau_0-\theta_1)^3+\kappa_0^2\tau_0-(\tau_2-\theta_3)
                        } \cos\theta_0}\frac{s^3}{6}
                    \end{split}
                \end{equation}
                \begin{equation}\label{eq:a3}
                    j^2\bm{a}_3 = \vecTri{0}{\sin\theta_0}{\cos\theta_0}
                    - \vecTri{\kappa_0\sin\theta_0}{(\tau_0-\theta_1)\cos\theta_0}{-(\tau_0-\theta_1)\sin\theta_0}s
                    - \vecTri{\kappa_1\sin\theta_0-\kappa_0 (\tau_0-2\theta_1)  \cos\theta_0}
                    {\qty{(\tau_0-\theta_1)^2+\kappa_0^2}\sin\theta_0+(\tau_1-\theta_2)\cos\theta_0}
                    {-(\tau_1-\theta_2)\sin\theta_0+(\tau_0-\theta_1)^2\cos\theta_0}\frac{s^2}{2}
                \end{equation}
    
            Similarly, the singular locus $\gamma$ is calculated. We assume a sequence $\qty{(\bm{t},\bm{b},\bm{n})^t_n}_{n=1}^\infty$ converges to $(\bm{t},\bm{n},\bm{b})^t$ and $(\bm{t},\bm{n},\bm{b})^t_1 = (\bm{t}_1(0),\bm{n}_2(0),\bm{b}_3(0))^t$. Then, the next recurrence relations hold due to the Frenet-Serret Theorem.
                $$\frac{d}{ds}\vecTri{\bm{t}}{\bm{n}}{\bm{b}}_{n+1} = \qty(\begin{array}{ccc}
                     0 & \kappa & 0  \\
                     -\kappa & 0 & \tau \\
                     0 & \tau & 0
                \end{array}) \vecTri{\bm{t}}{\bm{n}}{\bm{b}}_n$$
            We rewrite the recurrence relations as a integrate form.
                $$\vecTri{\bm{t}}{\bm{n}}{\bm{b}}_{n+1} = \matrixTT{1}{0}{0}{0}{1}{0}{0}{0}{1} + \int_0^s\qty(\begin{array}{ccc}
                 0 & \kappa & 0  \\
                 -\kappa & 0 & \tau \\
                 0 & \tau & 0
            \end{array})\vecTri{\bm{t}}{\bm{n}}{\bm{b}}_n ds$$
            Calculating these recurrence relations, we obtain an asymptotic expression of $\gamma$.
                \begin{equation*}
                    \begin{split}
                        j^4\bm{t} = \vecTri{1}{0}{0} + &\vecTri{0}{\kappa_0}{0}s + \vecTri{-\kappa_0^2}{\kappa_1}{\kappa_0\tau_0}\frac{s^2}{2} - \vecTri{3\kappa_0\kappa_1}{\kappa_0(\tau_0^2-\kappa_0^2)-\kappa_2}{-(\kappa_0\tau_1+2\kappa_1\tau_0)}\frac{s^3}{6}\\
                        & + \vecTri{\kappa_0^2(\kappa_0^2+\tau_0^2)-4\kappa_0\kappa_2-3\kappa_1^2}{-3\kappa_0(\tau_0\tau_1+2\kappa_0\kappa_1)-3\kappa_1\tau_0^2+\kappa_3}{-\kappa_0(\tau_0^3-\tau_2+\kappa_0^2\tau_0) + 3(\kappa_1\tau_1+\kappa_2\tau_0)}\frac{s^4}{24}
                    \end{split}
                \end{equation*}
            The singular locus $\gamma$ is integrated the tangent vector $\bm{t}$, thus
                \begin{equation}\label{eq:gamma}
                    \begin{split}
                        j^5\gamma = \vecTri{1}{0}{0}s + &\vecTri{0}{\kappa_0}{0}\frac{s^2}{2} + \vecTri{-\kappa_0^2}{\kappa_1}{\kappa_0\tau_0}\frac{s^3}{6} - \vecTri{3\kappa_0\kappa_1}{\kappa_0(\tau_0^2-\kappa_0^2)-\kappa_2}{-(\kappa_0\tau_1+2\kappa_1\tau_0)}\frac{s^4}{24}\\
                        & + \vecTri{\kappa_0^2(\kappa_0^2+\tau_0^2)-4\kappa_0\kappa_2-3\kappa_1^2}{-3\kappa_0(\tau_0\tau_1+2\kappa_0\kappa_1)-3\kappa_1\tau_0^2+\kappa_3}{-\kappa_0(\tau_0^3-\tau_2+\kappa_0^2\tau_0) + 3(\kappa_1\tau_1+\kappa_2\tau_0)}\frac{s^5}{120}
                    \end{split}
                \end{equation}
    

        \subsubsection{The first fundamental form and the second fundamental form }
            \begin{lem}
                We obtain the following expressions of the first fundamental form.
                    $$E = 1-\kappa\cos\theta t^2 - \frac{b_3}{3}\kappa\sin\theta t^3 + O^4(t),\ \ \ F = O^4(t),\ \ \ G = t^2$$
            \end{lem}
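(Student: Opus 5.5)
The plan is to compute $f_s$ and $f_t$ directly from the normal form \eqref{eq:FukuiNormalForm}. I will express both in the moving frame $\{\bm a_1,\bm a_2,\bm a_3\}$ and then read off the inner products using orthonormality. Throughout I take $m$ large, say $m\ge 5$. Differentiating the remainder $O^{m+1}(t)$ in $s$ keeps it $O^{m+1}(t)$, and differentiating it in $t$ gives $O^{m}(t)$, so neither affects terms up to order $t^4$.

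\textbf{The coefficient $G$.} This is immediate from condition \ronumiv{}: $G=\langles{f_t,f_t}=t^2$ exactly.

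\textbf{Setting up $f_s$.} By Lemma \ref{lem:FukuiFormCoeff} we have $a(s,t)=\tfrac{t^2}{2}+a_4\tfrac{t^4}{24}+\cdots$ with $a_4=-\tfrac34 b_3^2$. We also have $b(s,t)=b_3\tfrac{t^3}{6}+b_4\tfrac{t^4}{24}+\cdots$. Since $a_2\equiv 1$ and $a_3\equiv 0$, it follows that $a_s=O^4(t)$. Using $\gamma'=\bm a_1$ and Lemma \ref{lem:SeFrType} (assuming $\kappa\neq0$, as there), we substitute
\[
\bm a_2'=-\kappa\cos\theta\,\bm a_1+(\tau-\theta')\bm a_3,\qquad \bm a_3'=-\kappa\sin\theta\,\bm a_1-(\tau-\theta')\bm a_2 .
\]
This gives
\[
f_s=\bigl(1-a\kappa\cos\theta-b\kappa\sin\theta\bigr)\bm a_1+\bigl(a_s-b(\tau-\theta')\bigr)\bm a_2+\bigl(b_s+a(\tau-\theta')\bigr)\bm a_3+O^{m+1}(t).
\]
Similarly, $f_t=a_t\bm a_2+b_t\bm a_3+O^{m}(t)$, where $a_t=t+O^3(t)$ and $b_t=\tfrac{b_3}{2}t^2+O^3(t)$.

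\textbf{The coefficient $E$.} The $\bm a_2$- and $\bm a_3$-components of $f_s$ are $O^2(t)$, so their squares are $O^4(t)$. Hence $E$ equals the square of the $\bm a_1$-component modulo $O^4(t)$. That component is
\[
1-\tfrac{\kappa\cos\theta}{2}t^2-\tfrac{b_3\kappa\sin\theta}{6}t^3+O^4(t).
\]
Squaring it yields $E=1-\kappa\cos\theta\, t^2-\tfrac{b_3}{3}\kappa\sin\theta\, t^3+O^4(t)$.

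\textbf{The coefficient $F$.} Here $F=(a_s-b(\tau-\theta'))a_t+(b_s+a(\tau-\theta'))b_t$. The first product is $O^3(t)\cdot O^1(t)=O^4(t)$, and the second is $O^2(t)\cdot O^2(t)=O^4(t)$. Therefore $F=O^4(t)$.

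The only delicate point is bookkeeping of orders. In particular, one must check that no $t^3$ term survives in $F$ and that the cross terms in $E$ start at order $4$. This relies on $a_3=0$ and $b_2=0$ from Lemma \ref{lem:FukuiFormCoeff}, which I take as given.
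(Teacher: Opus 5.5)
Your proof is correct and follows the same route as the paper. Like the paper, you write $f_s$ and $f_t$ in the frame $\{\bm{a}_1,\bm{a}_2,\bm{a}_3\}$ using Lemma~\ref{lem:SeFrType}, observe that the $\bm{a}_2$- and $\bm{a}_3$-components of $f_s$ are $O^2(t)$, and read off $E$, $F$, $G$ by orthonormality. Your $\bm{a}_1$-component $1-\kappa(a\cos\theta+b\sin\theta)$ is the correct one and produces the stated $-\frac{b_3}{3}\kappa\sin\theta\,t^3$ term; the paper's intermediate display writes $-b\sin\theta$ inside the parentheses, which is a sign slip.
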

    
            \begin{proof}
                Since
                    $$\begin{array}{rcl}
                         f_t &=& \pardiff{}{t}\qty(\gamma(s) + a(s,t)\bm{a}_2+b(s,t)\bm{a}_3) \\
                         &=& a_t\bm{a}_2+b_t\bm{a}_2 \\
                         &=& \D \qty{t + \sum_{k=3}^{m-1} a_{k+1} \frac{t^k}{k!}}\bm{a}_2 + \qty{\sum_{k=2}^{m-1} b_{k+1} \frac{t^k}{k!}}\bm{a}_3 + O^m(t)\\
                         &=& (\bm{a}_1, \bm{a}_2, \bm{a}_3)\vecTri{0}{t+a_4\frac{t^3}{6} + O^4(t)}{b_3\frac{t^2}{2} + b_4\frac{t^3}{6} + O^4(t)}
                    \end{array}$$
                    $$\begin{array}{rcl}
                         f_s &=& \pardiff{}{s}\qty(\gamma(s)+a(s,t)\bm{a}_2(s) +b(s,t)\bm{a}_3(s)) \\
                         &=& \bm{a}_1 + (a_s\bm{a}_2 + a\bm{a}'_2) + (b_s\bm{a}_3 + b\bm{a}_3')\\
                         &=& \bm{a}_1 + a_s\bm{a}_2 + b_s\bm{a}_3 + a(-\kappa\cos\theta\bm{a}_1 + (\tau - \theta')\bm{a}_3) + b(-\kappa\sin\theta \bm{a}_1 - (\tau-\theta')\bm{a}_2)\\
                         &=& (\bm{a}_1,\bm{a}_2,\bm{a}_3)\vecTri{1-\kappa(a\cos\theta - b\sin\theta)}{a_s-b(\tau-\theta')}{b_s + a(\tau-\theta')}\\
                        &=& (\bm{a}_1,\bm{a}_2,\bm{a}_3)\vecTri{1-\kappa\cos\theta\frac{t^2}{2} + O^3(t)}{O^3(t)}{(\tau-\theta')\frac{t^2}{2} + O^3(t)}
                    \end{array}$$
                    $$a_s = \sum_{k=4}^{m}a_k'(s)\frac{t^k}{k!} + O^{m+1}(t),\ \ \ b_s = \sum_{k=3}^{m}b'_k(s)\frac{t^k}{k!} + O^{m+1}(t)$$
                we have
                    $$\begin{array}{rcl}
                         E &=& \qty(1-\kappa(a\cos\theta - b\sin\theta))^2 + \qty(a_s-b(\tau-\theta'))^2 + \qty(b_s + a(\tau-\theta'))^2 \\
                         &=& 1 -\kappa\cos\theta t^2 - \frac{b_3}{3}\kappa\sin\theta t^3 + O^4(t)\\
                         F &=& \D \qty{t + \sum_{k=3}^{m-1} a_{k+1} \frac{t^k}{k!}}\qty{a_s-b(\tau-\theta')} + \qty{\sum_{k=2}^{m-1} b_{k+1} \frac{t^k}{k!}}\qty{b_s + a(\tau-\theta')}\\
                         &=& O^4(t) \\
                         G &=& \langles{f_t,f_t} \ =\ t^2
                    \end{array}$$
            \end{proof}
    
            \begin{lem} \label{lem:EFG}
                We obtain the following expression of a norm $|f_s\times f_t|$ and the unit normal vector $\nu$.
                    $$|f_s\times f_t|^2 = t^2\qty(1-\kappa\cos\theta t^2 + O^3(t))$$
                    $$\nu=(\bm{a}_1, \bm{a}_2, \bm{a}_3)\vecTri{-(\tau-\theta')\frac{t^2}{2} + O^3(t)}{-b_3\frac{t}{2}-b_4\frac{t^2}{6} + O^3(t)}{1 - \frac{b_3^2}{8}t^2 + O^3(t)}$$
            \end{lem}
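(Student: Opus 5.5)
The plan is to compute $f_s\times f_t$ directly in the frame $\{\bm{a}_1,\bm{a}_2,\bm{a}_3\}$, using the component expressions of $f_s$ and $f_t$ obtained in the proof of the previous lemma. The one change is that we keep one more order in $t$ than was written there. We have
$$f_t=(\bm{a}_1,\bm{a}_2,\bm{a}_3)\,{}^t\!\bigl(0,\; t+a_4\tfrac{t^3}{6}+O^4(t),\; b_3\tfrac{t^2}{2}+b_4\tfrac{t^3}{6}+O^4(t)\bigr).$$
Writing $f_s=(\bm{a}_1,\bm{a}_2,\bm{a}_3)\,{}^t(X,Y,Z)$, the previous proof gives
$$X=1-\kappa(a\cos\theta-b\sin\theta),\qquad Y=a_s-b(\tau-\theta'),\qquad Z=b_s+a(\tau-\theta').$$
Substituting $a=\tfrac{t^2}{2}+O^4(t)$, $b=b_3\tfrac{t^3}{6}+O^4(t)$ and $b_s=b_3'\tfrac{t^3}{6}+O^4(t)$ yields
$$X=1-\kappa\cos\theta\,\tfrac{t^2}{2}+O^3(t),\qquad Y=O^3(t),\qquad Z=(\tau-\theta')\tfrac{t^2}{2}+O^3(t).$$

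Since the frame is orthonormal and positively oriented ($\bm{a}_1\times\bm{a}_2=\bm{a}_3$, and cyclically), the cross product in components is
$$f_s\times f_t=(\bm{a}_1,\bm{a}_2,\bm{a}_3)\,{}^t\bigl(Y f_t^3-Z f_t^2,\; -X f_t^3,\; X f_t^2\bigr),$$
where $f_t^2,f_t^3$ denote the $\bm{a}_2,\bm{a}_3$ components of $f_t$. Each component is divisible by $t$, and dividing out gives
$$\frac{f_s\times f_t}{t}=(\bm{a}_1,\bm{a}_2,\bm{a}_3)\,{}^t\Bigl(-(\tau-\theta')\tfrac{t^2}{2}+O^3(t),\; -b_3\tfrac{t}{2}-b_4\tfrac{t^2}{6}+O^3(t),\; 1-\kappa\cos\theta\,\tfrac{t^2}{2}+O^3(t)\Bigr).$$
For the first component, $Yf_t^3=O^5(t)$ and $Zf_t^2=(\tau-\theta')\tfrac{t^3}{2}+O^4(t)$. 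In the second component the product of $X$'s $t^2$ correction with $b_3t/2$ only enters at order $t^3$. In the third component $a_4t^2/6$ enters at order $t^2$, but $a_4=-\tfrac34 b_3^2$ by Lemma \ref{lem:FukuiFormCoeff}, so I will keep it for the next step.

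The norm follows either from $|f_s\times f_t|^2=EG-F^2$, using the previous lemma, or directly from the components. From the lemma,
$$EG-F^2=t^2\bigl(1-\kappa\cos\theta\,t^2+O^3(t)\bigr)-O^8(t),$$
which is the claimed formula. Then $\nu=(f_s\times f_t)/|f_s\times f_t|$, and the common factor $t$ cancels for $t>0$. This also shows that $\nu$ extends smoothly across $t=0$, i.e. $f$ is a frontal with this $\nu$.

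The main obstacle is the consistency of the third component of $\nu$, where $\kappa\cos\theta$ terms and $b_3^2$ terms must combine correctly. I would avoid fragile bookkeeping by fixing the $\bm{a}_3$-component by unit length: $\nu_3=\sqrt{1-\nu_1^2-\nu_2^2}$, with $\nu_2^2=b_3^2t^2/4+O^3(t)$ and $\nu_1^2=O^4(t)$, gives $\nu_3=1-\tfrac{b_3^2}{8}t^2+O^3(t)$. The $\kappa\cos\theta$ correction in the raw third component is then cancelled exactly by the normalizing factor $(1-\kappa\cos\theta\,t^2)^{-1/2}=1+\kappa\cos\theta\,\tfrac{t^2}{2}+O^3(t)$. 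This factor does not change $\nu_1,\nu_2$ to the stated orders, so the expression for $\nu$ follows.
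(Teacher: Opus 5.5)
Your proposal is correct and follows essentially the paper's route: expand $f_s\times f_t/t$ in the frame $\{\bm{a}_1,\bm{a}_2,\bm{a}_3\}$, take $|f_s\times f_t|^2=EG-F^2$ from the previous lemma, and normalize by $1+\kappa\cos\theta\,\frac{t^2}{2}+O^3(t)$. The only difference is how you get $\nu_3$: you use the unit-length condition, whereas the paper multiplies the raw third component $1+\bigl(\frac{a_4}{6}-\frac{\kappa\cos\theta}{2}\bigr)t^2+O^3(t)$ by the normalizing factor and then uses $a_4=-\frac34 b_3^2$. That raw component is what your display should show, as you acknowledge in the text; both routes give $1-\frac{b_3^2}{8}t^2+O^3(t)$.
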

    
            \begin{proof}
                We compute $EG-F^2$, then
                    $$|f_s\times f_t|^2 = EG-F^2 = t^2(1 -\kappa\cos\theta t^2 + O^3(t)).$$
                Since
                    $$\begin{array}{rcl}
                         \bm{a}_2'\times\bm{a}_2 &=& (-\kappa\cos\theta\bm{a}_1+(\tau-\theta')\bm{a}_3)\times\bm{a}_2\\
                         &=& -\kappa\cos\theta\bm{a}_1\times\bm{a}_2 + (\tau-\theta')\bm{a}_3\times\bm{a}_2\\
                         &=& - (\tau-\theta')\bm{a}_1 - \kappa\cos\theta\bm{a}_3,
                    \end{array}$$
                we have
                    $$\begin{array}{rcl}
                         \D f_s\times \frac{f_t}{t} &=& \D \qty(\bm{a}_1+\bm{a}_2'\frac{t^2}{2} + O^3(t))\times\qty(\bm{a}_2+b_3\bm{a}_3\frac{t}{2}+\qty(a_4\bm{a}_2+b_4\bm{a}_3)\frac{t^2}{6} + O^3(t)) \\
                         &=& \D \bm{a}_1\times\bm{a}_2 + b_3\bm{a}_1\times\bm{a}_3\frac{t}{2} + (3\bm{a}_2'\times\bm{a}_2+a_4\bm{a}_1\times\bm{a}_2+b_4\bm{a}_1\times\bm{a}_3)\frac{t^2}{6} + O^3(t) \\
                         &=& \D \bm{a}_3 - b_3 \bm{a}_2\frac{t}{2}+(3\bm{a}_2'\times\bm{a}_2+a_4\bm{a}_3-b_4\bm{a}_2)\frac{t^2}{6} +  O^3(t) \\
                         &=& \D \bm{a}_3 - b_3 \bm{a}_2\frac{t}{2}+(- 3(\tau-\theta')\bm{a}_1 - b_4\bm{a}_2 + (a_4- 3\kappa\cos\theta) \bm{a}_3)\frac{t^2}{6} +  O^3(t) \\
                         &=& (\bm{a}_1, \bm{a}_2, \bm{a}_3)\vecTri{-(\tau-\theta')\frac{t^2}{2} + O^3(t)}{-b_3\frac{t}{2}-b_4\frac{t^2}{6} + O^3(t)}{1  + (a_4- 3\kappa\cos\theta)\frac{t^2}{6} + O^3(t)}
                    \end{array}$$
                A vector $f_s\times f_t/t$ is divided by a norm of $f_s\times f_t/t$ to use Taylor expansion of $\frac{1}{\sqrt{1+x}}$.
                    $$\begin{array}{rcl}
                         \D \frac{f_s\times f_t/t}{|f_s\times f_t/3t|} &=& \D \frac{f_s\times f_t/t}{\sqrt{1 -\kappa\cos\theta t^2 + O^3(t)}} \\
                         &=&\D \qty(1+\frac{\kappa\cos\theta}{2}t^2+O^3(t))\times(\bm{a}_1, \bm{a}_2, \bm{a}_3)\vecTri{-(\tau-\theta')\frac{t^2}{2}}{-b_3\frac{t}{2}-b_4\frac{t^2}{6}}{1 + (a_4- 3\kappa\cos\theta)\frac{t^2}{6}}\\
                         &=& (\bm{a}_1, \bm{a}_2, \bm{a}_3)\vecTri{-(\tau-\theta')\frac{t^2}{2} + O^3(t)}{-b_3\frac{t}{2}-b_4\frac{t^2}{6} + O^3(t)}{1 - \frac{b_3^2}{8}t^2 + O^3(t)}
                    \end{array}$$   
            \end{proof}
            \begin{lem}
                We obtain the following expressions of the second fundamental form.
                    $$L = \kappa\sin\theta -\frac{b_3}{2}\kappa\cos\theta t + O^2(t),\ \ \ M = (\tau-\theta')t + b_3'\frac{t^2}{2} + O^3(t),\ \ \ N = b_3\frac{t}{2} + b_4\frac{t^2}{3} + O^3(t)$$
            \end{lem}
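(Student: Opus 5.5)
The three coefficients are $L=\langle f_{ss},\nu\rangle$, $M=\langle f_{st},\nu\rangle$ and $N=\langle f_{tt},\nu\rangle$. The plan is to write $f_{ss}$, $f_{st}$, $f_{tt}$ in the frame $\{\bm{a}_1,\bm{a}_2,\bm{a}_3\}$ and pair them with the expansion of $\nu$ from Lemma \ref{lem:EFG}. Since this frame is orthonormal, each inner product is just the sum of products of corresponding components. Throughout I will use the frame derivatives of Lemma \ref{lem:SeFrType}:
\[
\bm{a}_1'=\kappa\cos\theta\,\bm{a}_2+\kappa\sin\theta\,\bm{a}_3,\qquad
\bm{a}_2'=-\kappa\cos\theta\,\bm{a}_1+(\tau-\theta')\bm{a}_3,\qquad
\bm{a}_3'=-\kappa\sin\theta\,\bm{a}_1-(\tau-\theta')\bm{a}_2 .
\]
I will also use Lemma \ref{lem:FukuiFormCoeff} ($a_2=1$, $b_2=0$, $a_3=0$), so that $a=t^2/2+O^4(t)$ and $b=b_3t^3/6+O^4(t)$.

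\emph{The coefficient $N$.} From the expression of $f_t$ computed before Lemma \ref{lem:EFG}, differentiating in $t$ gives
\[
f_{tt}=(1+a_4t^2/2)\bm{a}_2+(b_3t+b_4t^2/2)\bm{a}_3+O^3(t).
\]
Pairing with $\nu=(-(\tau-\theta')t^2/2,\,-b_3t/2-b_4t^2/6,\,1-b_3^2t^2/8)$ yields
\[
N=-b_3t/2-b_4t^2/6+b_3t+b_4t^2/2+O^3(t)=b_3t/2+b_4t^2/3+O^3(t).
\]

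\emph{The coefficient $M$.} I differentiate $f_t=(t+a_4t^3/6)\bm{a}_2+(b_3t^2/2+b_4t^3/6)\bm{a}_3+O^4(t)$ in $s$, differentiating both the scalar coefficients and the frame vectors. Up to $O^3(t)$ this gives
\[
f_{st}=-\kappa\cos\theta\,t\,\bm{a}_1-(\tau-\theta')\tfrac{b_3}{2}t^2\,\bm{a}_2+\bigl((\tau-\theta')t+b_3'\tfrac{t^2}{2}\bigr)\bm{a}_3+O^3(t).
\]
The first two components meet components of $\nu$ that are $O^2(t)$ and $O(t)$ respectively, so both contribute only $O^3(t)$. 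Hence $M=(\tau-\theta')t+b_3't^2/2+O^3(t)$.

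\emph{The coefficient $L$.} Only an error of $O^2(t)$ is needed here. Since $a,b=O^2(t)$ and $a_s,b_s=O^3(t)$, the frame expression of $f_s$ gives $f_s=\bm{a}_1+O^2(t)$, and this remains true after differentiating in $s$. Therefore
\[
f_{ss}=\bm{a}_1'+O^2(t)=\kappa\cos\theta\,\bm{a}_2+\kappa\sin\theta\,\bm{a}_3+O^2(t),
\]
and pairing with $\nu$ gives $L=\kappa\sin\theta-\frac{b_3}{2}\kappa\cos\theta\,t+O^2(t)$.

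The main point needing care is the order bookkeeping. In $M$ one must check that the cross terms from $\bm{a}_1$ and $\bm{a}_2$ really are $O^3(t)$. In $L$ one must check that the $O^2(t)$ remainder of $f_s$ stays $O^2(t)$ after $s$-differentiation; this holds because the expansions in $t$ have coefficients smooth in $s$.
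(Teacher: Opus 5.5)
Your proof is correct and follows the same route as the paper: you expand $f_{ss}$, $f_{st}$, $f_{tt}$ in the frame $\{\bm{a}_1,\bm{a}_2,\bm{a}_3\}$ using Lemma \ref{lem:SeFrType} and pair them with $\nu$ from Lemma \ref{lem:EFG}. One harmless slip: the $\bm{a}_1$-component of $f_{st}$ also contains $-\frac{b_3}{2}\kappa\sin\theta\,t^2$ (from $b_t\bm{a}_3'$), but as your order count already shows, it pairs with the $O^2(t)$ component of $\nu$ and so does not affect $M$.
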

    
            \begin{proof}
                Since
                    $$\begin{array}{rcl}
                        f_{ss} &=& (\bm{a}_1,\bm{a}_2,\bm{a}_3)'\vecTri{1 + O^2(t)}{O^2(t)}{O^2(t)} + (\bm{a}_1,\bm{a}_2,\bm{a}_3)\vecTri{ O^2(t)}{O^2(t)}{O^2(t)}\\
                        &=& (\bm{a}_1,\bm{a}_2,\bm{a}_3)\qty{\matrixTT{0}{-\kappa\cos\theta}{-\kappa\sin\theta}{\kappa\cos\theta}{0}{-(\tau-\theta')}{\kappa\sin\theta}{\tau-\theta'}{0} \vecTri{1 + O^2(t)}{O^2(t)}{O^2(t)} + \vecTri{ O^2(t)}{O^2(t)}{O^2(t)}}\\
                        &=& (\bm{a}_1,\bm{a}_2,\bm{a}_3)\vecTri{O^2(t)}{\kappa\cos\theta + O^2(t)}{\kappa\sin\theta + O^2(t)}\\
                    \end{array}$$
                    $$\begin{array}{rcl}
                         f_{st} &=& (\bm{a}_1, \bm{a}_2, \bm{a}_3)'\vecTri{0}{t+O^3(t)}{b_3\frac{t^2}{2} + O^3(t)} + (\bm{a}_1, \bm{a}_2, \bm{a}_3)\vecTri{0}{O^3(t)}{b_3'\frac{t^2}{2} + O^3(t)}\\
                         &=& (\bm{a}_1,\bm{a}_2,\bm{a}_3)\qty{\matrixTT{0}{-\kappa\cos\theta}{-\kappa\sin\theta}{\kappa\cos\theta}{0}{-(\tau-\theta')}{\kappa\sin\theta}{\tau-\theta'}{0}\vecTri{0}{t+O^3(t)}{b_3\frac{t^2}{2} + O^3(t)} + \vecTri{0}{O^3(t)}{b_3'\frac{t^2}{2} + O^3(t)}} \\
                         &=& (\bm{a}_1,\bm{a}_2,\bm{a}_3)\vecTri{-\kappa\cos\theta t -b_3\kappa\sin\theta \frac{t^2}{2}+ O^3(t)}{-b_3(\tau-\theta')\frac{t^2}{2} + O^3(t)}{(\tau-\theta')t + b_3'\frac{t^2}{2} + O^3(t)}
                    \end{array}$$
                    $$\begin{array}{rcl}
                         f_{tt} &=& (\bm{a}_1, \bm{a}_2, \bm{a}_3)\vecTri{0}{1+a_4\frac{t^2}{2} + O^3(t)}{b_3t + b_4\frac{t^2}{2} + O^3(t)} 
                    \end{array}$$
                    for Lemma. \ref{lem:SeFrType}, we have
                        $$\begin{array}{rcl}
                             L &=& \langles{f_{ss},\nu}\ =\ \langles{\vecTri{O^2(t)}{\kappa\cos\theta + O^2(t)}{\kappa\sin\theta + O^2(t)}, \vecTri{O^2(t)}{-b_3\frac{t}{2} + O^2(t)}{1 + O^2(t)}} \\
                             &=& \D \kappa\sin\theta - b_3\kappa\cos\theta\frac{t}{2} + O^2(t) 
                        \end{array}$$
                        $$\begin{array}{rcl}
                             M &=& \langles{f_{ss},\nu}\ =\ \langles{\vecTri{-\kappa\cos\theta t -b_3\kappa\sin\theta \frac{t^2}{2}+ O^3(t)}{-b_3(\tau-\theta')\frac{t^2}{2} + O^3(t)}{(\tau-\theta')t + b_3'\frac{t^2}{2} + O^3(t)}, \vecTri{-(\tau-\theta')\frac{t^2}{2} + O^3(t)}{-b_3\frac{t}{2}-b_4\frac{t^2}{6} + O^3(t)}{1 - \frac{b_3^2}{8}t^2 + O^3(t)}} \\
                             &=& \D (\tau-\theta')t + b_3'\frac{t^2}{2} + O^3(t)
                        \end{array}$$
                        $$\begin{array}{rcl}
                            N &=& \langles{f_{ss},\nu}\ =\ \langles{\vecTri{0}{1+a_4\frac{t^2}{2} + O^3(t)}{b_3t + b_4\frac{t^2}{2} + O^3(t)}, \vecTri{-(\tau-\theta')\frac{t^2}{2} + O^3(t)}{-b_3\frac{t}{2}-b_4\frac{t^2}{6} + O^3(t)}{1 - \frac{b_3^2}{8}t^2 + O^3(t)}} \\
                             &=& \D b_3\frac{t}{2} + b_4\frac{t^2}{3} + O^3(t).
                        \end{array}$$
            \end{proof}
    
            \begin{lem}\label{lem:Weingarten}
                We obtain the following expression of the Weingarten matrix.
                    $$W = \matrixDD{\kappa\sin\theta - \frac{b_3}{2}\kappa\cos\theta t + O^2(t)}{(\tau -\theta')t + O^2(t)}{\frac{1}{t}((\tau-\theta') + b_3'\frac{t}{2} + O^2(t))}{\frac{1}{t}(\frac{b_3}{2} + b_4\frac{t}{3} + O^2(t))}$$
            \end{lem}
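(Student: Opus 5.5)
We compute the Weingarten matrix from the standard formula $W = \mathrm{I}^{-1}\,\mathrm{II}$, where $\mathrm{I}=\begin{pmatrix}E&F\\F&G\end{pmatrix}$ and $\mathrm{II}=\begin{pmatrix}L&M\\M&N\end{pmatrix}$. We plug in the expansions from the two preceding lemmas:
$$E = 1-\kappa\cos\theta\, t^2+O^3(t),\qquad F=O^4(t),\qquad G=t^2,$$
together with the expansions of $L,M,N$. The inverse is
$$\mathrm{I}^{-1}=\frac{1}{EG-F^2}\begin{pmatrix}G&-F\\-F&E\end{pmatrix},$$
and by Lemma \ref{lem:EFG} we have $EG-F^2=t^2(1-\kappa\cos\theta\,t^2+O^3(t))$. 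Hence
$$\frac{G}{EG-F^2}=1+O^2(t),\qquad \frac{F}{EG-F^2}=O^2(t),\qquad \frac{E}{EG-F^2}=\frac{1}{t^2}\bigl(1+O^2(t)\bigr),$$
using $1/(1-x)=1+x+\cdots$. The key point is that $F$ vanishes to order four. The off-diagonal correction terms therefore stay small even after division by $t^2$.

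Next we multiply out entry by entry.

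The $(1,1)$ entry is $\frac{GL-FM}{EG-F^2}$. Since $FM=O^5(t)$, this equals $L(1+O^2(t))+O^3(t)$, giving $\kappa\sin\theta-\frac{b_3}{2}\kappa\cos\theta\,t+O^2(t)$.

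The $(1,2)$ entry is $\frac{GM-FN}{EG-F^2}=M+O^2(t)=(\tau-\theta')t+O^2(t)$.

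The $(2,1)$ entry is $\frac{EM-FL}{EG-F^2}=\frac{1}{t^2}\bigl(M+O^3(t)\bigr)(1+O^2(t))$, which equals $\frac1t\bigl((\tau-\theta')+b_3'\frac t2+O^2(t)\bigr)$.

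The $(2,2)$ entry is $\frac{EN-FM}{EG-F^2}=\frac1{t^2}\bigl(N+O^3(t)\bigr)$, which equals $\frac1t\bigl(\frac{b_3}{2}+\frac{b_4}{3}t+O^2(t)\bigr)$.

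The only delicate step is the order bookkeeping in the second row, where division by $t^2$ costs two orders. There we must check that the errors in $E$ (order $t^2$) and $F$ (order $t^4$) contribute only $O^2(t)$ inside the bracket after factoring out $1/t$. This holds because $M$ and $N$ are $O(t)$, so $(E-1)M/t^2=O(t)$ becomes $O^2(t)$ after factoring out $1/t$, and $FL/t^2=O^2(t)$ likewise. A further subtlety is that $M,N$ are only known modulo $O^3(t)$, which is exactly enough for the stated precision.

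Depending on the sign convention, $W$ may instead be defined as the transpose $\mathrm{II}\,\mathrm{I}^{-1}$. We fix the convention $W=\mathrm{I}^{-1}\mathrm{II}$, since it matches the displayed placement of the $1/t$ factors in the second row.

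Finally, Proposition \ref{prop:DegCSk} is to be proved via this matrix. As a byproduct, the entries show that when $b_3=b_3'=0$ the leading singular terms in the second row vanish, which is how the deferred criterion enters.
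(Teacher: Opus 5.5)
Your proof is correct and follows the paper's route: write $W=\mathrm{I}^{-1}\mathrm{II}$ via the adjugate of $\mathrm{I}$, use $G=t^2$, $F=O^4(t)$ and $EG-F^2=t^2(1+O^2(t))$, and read off each entry with the same order bookkeeping. Your convention also matches what the paper actually computes and displays, despite its label $W^t$ and its miswritten adjugate. The one slip is in your closing aside, which the lemma does not need: $b_3=b_3'=0$ removes only the singular leading term $b_3/(2t)$ of the $(2,2)$ entry, not the $(\tau-\theta')/t$ term of the $(2,1)$ entry, and it is the $(2,2)$ entry that enters the proof of Proposition~\ref{prop:DegCSk}.
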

    
            \begin{proof}
                Weingarten matrix holds next formula;
                    $$(\bm{\nu}_s,\bm{\nu}_t)=-(f_s,f_t)W.$$
                Thus, $W^t=\matrixDD{E}{F}{F}{G}^{-1}\matrixDD{L}{M}{M}{N}$ holds. Then,
                    $$\matrixDD{E}{F}{F}{G}^{-1}\matrixDD{L}{M}{M}{N} \ =\ \frac{1}{EG-F^2}\matrixDD{E}{-F}{-F}{G} \matrixDD{L}{M}{M}{N} \\$$
                    $$\begin{array}{rcl}
                        &=& \D \frac{1}{t^2(1 + O^2(t))}\matrixDD{t^2}{O^4(t)}{O^4(t)}{1 + O^2(t)}\matrixDD{\kappa\sin\theta -\frac{b_3}{2}\kappa\cos\theta t + O^2(t)}{(\tau-\theta')t + b_3'\frac{t^2}{2} + O^3(t)}{(\tau-\theta')t + b_3'\frac{t^2}{2} + O^3(t)}{b_3\frac{t}{2} + b_4\frac{t^2}{3} + O^3(t)} \\
                        &=& \D \frac{1}{t^2(1 + O^2(t))}\matrixDD{\kappa\sin\theta t^2 -\frac{b_3}{2}\kappa\cos\theta t^3 + O^4(t)}{(\tau -\theta')t^3 + O^4(t)}{(\tau-\theta')t + b_3'\frac{t^2}{2} + O^3(t)}{b_3\frac{t}{2} + b_4\frac{t^2}{3} + O^3(t)}\\
                        &=& \matrixDD{\kappa\sin\theta - \frac{b_3}{2}\kappa\cos\theta t + O^2(t)}{(\tau -\theta')t + O^2(t)}{\frac{1}{t}((\tau-\theta') + b_3'\frac{t}{2} + O^2(t))}{\frac{1}{t}(\frac{b_3}{2} + b_4\frac{t}{3} + O^2(t))}
                    \end{array}$$
            \end{proof}
            \begin{cor}\label{cor:GH}
                We obtain the Gaussian curvature $K$ and the mean curvature $H$.
                    $$\begin{array}{rcl}
                         H&=& \frac{1}{t} \qty[ \frac{b_4}{4} + \frac{1}{2}\qty(\frac{b_4}{3} + \kappa\sin\theta)t + O(t^2) ],\\
                        K&=& \frac{1}{t} \qty[ \frac{b_3\kappa\sin\theta}{2} + \qty{ \frac{b_4\kappa\sin\theta}{3} - \qty(\tau-\theta')^2 - \frac{b_3^2\kappa\cos\theta}{4}} t + O(t^2) ].  
                    \end{array}$$
                    $$$$
            \end{cor}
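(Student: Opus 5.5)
Corollary \ref{cor:GH} follows from Lemma \ref{lem:Weingarten}, using $K=\det W$ and $H=\frac12\operatorname{tr}W$. The only care needed is the singular factor $1/t$ in the second row of $W$, since the coefficients of $1/t$ must be expanded one order further than what is displayed. I would write
\[
W=\begin{pmatrix} w_{11} & w_{12}\\ \frac1t\,\tilde w_{21} & \frac1t\,\tilde w_{22}\end{pmatrix},
\]
where $w_{11},w_{12},\tilde w_{21},\tilde w_{22}$ are smooth in $t$. From Lemma \ref{lem:Weingarten} these are
\begin{gather*}
w_{11}=\kappa\sin\theta-\tfrac{b_3}{2}\kappa\cos\theta\, t+O^2(t),\qquad w_{12}=(\tau-\theta')t+O^2(t),\\
\tilde w_{21}=(\tau-\theta')+\tfrac{b_3'}{2}t+O^2(t),\qquad \tilde w_{22}=\tfrac{b_3}{2}+\tfrac{b_4}{3}t+O^2(t).
\end{gather*}

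For the mean curvature, the trace is
\[
\operatorname{tr}W=\frac1t\bigl(\tilde w_{22}+t\,w_{11}\bigr)=\frac1t\Bigl(\frac{b_3}{2}+\Bigl(\frac{b_4}{3}+\kappa\sin\theta\Bigr)t+O^2(t)\Bigr).
\]
Halving it gives $H$ directly. The leading term should then be $b_3/4$ rather than the printed $b_4/4$, which I would flag as a typo to be checked against the convention for $H$.

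For the Gaussian curvature, the determinant is
\[
\det W=\frac1t\bigl(w_{11}\tilde w_{22}-w_{12}\tilde w_{21}\bigr).
\]
I would multiply out to order $t$:
\begin{align*}
w_{11}\tilde w_{22}&=\frac{b_3\kappa\sin\theta}{2}+\Bigl(\frac{b_4\kappa\sin\theta}{3}-\frac{b_3^2\kappa\cos\theta}{4}\Bigr)t+O^2(t),\\
w_{12}\tilde w_{21}&=(\tau-\theta')^2t+O^2(t).
\end{align*}
Subtracting yields exactly the stated bracket for $K$.

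The main obstacle is justifying that the $O^2(t)$ remainders in Lemma \ref{lem:Weingarten} are adequate. They are multiplied by $1/t$, so they contribute only at order $O(t)$ inside the bracket, which is absorbed in the stated error term.

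A subtler point is that $W$ was obtained through $(EG-F^2)^{-1}$, with $EG-F^2=t^2(1-\kappa\cos\theta\,t^2+O^3(t))$. The factor $(1+O^2(t))^{-1}$ therefore does not alter the first two orders. I would therefore recheck the $t^2$-coefficient of $N$ and of $EG-F^2$ only to the extent needed. Since the correction from $EG-F^2$ begins at $t^2$, it does not affect the coefficients computed above.

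I would also double-check sign conventions, namely $W$ versus $W^t$ and the identity $(\nu_s,\nu_t)=-(f_s,f_t)W$. Neither affects $\det$ or $\operatorname{tr}$, so the result is independent of those choices.
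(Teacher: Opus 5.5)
Your proposal is correct and follows the paper's own argument. The paper's proof simply reads off $K=\det W$ and $H=\frac12\tr W$ from Lemma~\ref{lem:Weingarten}, and your expansions of $\frac1t(\tilde w_{22}+t\,w_{11})$ and $\frac1t(w_{11}\tilde w_{22}-w_{12}\tilde w_{21})$ carry this out correctly. You are also right that this lemma gives leading term $\frac{b_3}{4}$ for $H$, so the printed $\frac{b_4}{4}$, which is repeated in Remark~\ref{rmk:GaussMean}, is a typo.
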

            \begin{proof}
                Since $K = \det W$ and $H = \frac12\tr W$, we obtain the state.
            \end{proof}
        
        \subsubsection{Invariants}
            \begin{lem} \label{lem:invariants}
                Several geometric invariants for cuspidal edges were already defined in \cite{MS}, \cite{MSUY}, \cite{SUY}. Here is a list of these invariants.
                \begin{itemize}
                    \item \textbf{normal curvature} in \cite{SUY} $\kappa_{\bm{\nu}} = \kappa\cos\theta$
                    \item \textbf{singular curvature} in \cite{SUY} $\kappa_s = \kappa\sin\theta$
                    \item \textbf{cuspidal curvature} in \cite{MSUY} $\kappa_c = b_3$
                    \item \textbf{cusp-directional torsion} in \cite{MS} $\kappa_t = \tau-\theta'$
                    \item \textbf{edge inflectional curvature} in \cite{MS} $\kappa_i =\kappa\tau\cos\theta+\kappa'\sin\theta$
                \end{itemize}
            \end{lem}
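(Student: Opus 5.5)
The plan is to verify each identification separately. I take the defining formulas of $\kappa_s,\kappa_\nu$ from \cite{SUY}, $\kappa_c$ from \cite{MSUY}, and $\kappa_t,\kappa_i$ from \cite{MS}. Each is given for an arbitrary adapted coordinate system $(s,t)$ with singular curve $\gamma$ and null vector field $\eta$. I then evaluate them in Fukui's normal form, where everything simplifies. In this form $S(f)=\{t=0\}$, $\gamma(s)=f(s,0)$ is parametrized by arc length (so $|\gamma'|=1$), and $\eta=\partial_t$ is a null vector field along $S(f)$. By Lemma \ref{lem:EFG}, $\bm\nu(s,0)=\bm a_3(s)$. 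So along $\gamma$ the triple $(\gamma',\,\cdot\,,\bm\nu)$ is exactly the orthonormal frame $(\bm a_1,\bm a_2,\bm a_3)$. Every determinant appearing in the definitions becomes a single component in this frame, via $\det(\bm a_1,\bm a_2,X)=\langle X,\bm a_3\rangle$ and $\det(\bm a_1,X,\bm a_3)=-\langle X,\bm a_2\rangle$.

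The ingredients I need at $t=0$ are the following.
\begin{itemize}
\item From Frenet--Serret and the definitions of $\bm a_2,\bm a_3$:
\[
\gamma''=\kappa\bm n=\kappa\cos\theta\,\bm a_2+\kappa\sin\theta\,\bm a_3 .
\]
\item Differentiating once more:
\[
\gamma'''=-\kappa^2\bm t+\kappa'\bm n+\kappa\tau\bm b .
\]
\item From \eqref{eq:FukuiNormalForm} and Lemma \ref{lem:FukuiFormCoeff}, using $a_2=1$, $b_2=0$, $a_3=0$:
\[
f_{tt}=\bm a_2,\qquad f_{ttt}=b_3\bm a_3 .
\]
\item From Lemma \ref{lem:SeFrType}:
\[
f_{stt}=\bm a_2'=-\kappa\cos\theta\,\bm a_1+(\tau-\theta')\bm a_3 .
\]
\end{itemize}

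Each formula then becomes a one-line computation.
\begin{itemize}
\item The two curvatures of $\gamma$ relative to the surface are the components of $\gamma''$ in the normal plane: its component along $\bm\nu=\bm a_3$ and its component along the conormal $\bm\nu\times\gamma'=\bm a_2$. This yields the two expressions $\kappa\sin\theta$ and $\kappa\cos\theta$.
\item The cuspidal curvature reduces to
\[
\kappa_c=\frac{\det(\gamma',f_{tt},f_{ttt})}{|\gamma'\times f_{tt}|^{5/2}}\Big|_{t=0}=\det(\bm a_1,\bm a_2,b_3\bm a_3)=b_3 .
\]
\item In the formula for the cusp-directional torsion, the correction term containing $\langle\gamma',f_{tt}\rangle$ vanishes because $\bm a_1\perp\bm a_2$. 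What remains is
\[
\kappa_t=\det(\bm a_1,\bm a_2,\bm a_2')=\tau-\theta' .
\]
\item For the edge inflectional curvature, the analogous correction terms also drop out. The remaining term is $\det(\gamma',f_{tt},\gamma''')=\langle\gamma''',\bm a_3\rangle$. Since $\bm a_3=\sin\theta\,\bm n+\cos\theta\,\bm b$, this equals $\kappa'\sin\theta+\kappa\tau\cos\theta$.
\end{itemize}

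The main obstacle is matching normalizations rather than any computation. The cited definitions involve sign factors such as $\operatorname{sgn}(d\lambda(\eta))$, a choice of orientation of $\eta$ and $\bm\nu$, and division by powers of $|\gamma'|$ and $|\gamma'\times f_{tt}|$. I must check that all of these equal $1$ in Fukui's setting. In particular I must check that $\lambda=\det(f_s,f_t,\bm\nu)=t+O(t^2)$ has $d\lambda(\eta)>0$, which follows from Lemma \ref{lem:EFG}. I must also make sure which of $\kappa\cos\theta$ and $\kappa\sin\theta$ is assigned to the component along $\bm\nu$ and which to the conormal component, consistently with the orientation conventions fixed in \cite{SUY}. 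I would settle these by writing $f_s\times f_t=t\,\bm a_3+O(t^2)$ explicitly and comparing with the sign conventions in each reference before assembling the list.
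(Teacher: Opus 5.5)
Your route is the same as the paper's. Both evaluate each defining formula at $t=0$ in Fukui's frame, where $\gamma'=\bm a_1$, $f_{tt}=\bm a_2$ and $\nu=\bm a_3$. For $\kappa_c$, $\kappa_t$ and $\kappa_i$ your computations match the paper's determinants $\det(f_s,f_{tt},f_{ttt})$, $\det(f_s,f_{tt},f_{stt})$ and $\det(f_s,f_{tt},f_{sss})$, and they are fine.

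The gap is in the first two bullets, and it is not a normalization issue. Orientation choices for $\nu$ and $\eta$ can only change signs; they cannot exchange $\kappa\cos\theta$ and $\kappa\sin\theta$. Take the definitions of \cite{SUY}: $\kappa_\nu=\langle\gamma'',\nu\rangle/|\gamma'|^2$ and $\kappa_s=\mathrm{sgn}(d\lambda(\eta))\det(\gamma',\gamma'',\nu)/|\gamma'|^3$. Your own data then give $\kappa_\nu=\langle\kappa\bm n,\bm a_3\rangle=\kappa\sin\theta$. Since $\lambda=t+O(t^2)$, so that $d\lambda(\partial_t)=1$, they also give $\kappa_s=\langle\gamma'',\bm a_3\times\bm a_1\rangle=\langle\gamma'',\bm a_2\rangle=\kappa\cos\theta$. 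This is the reverse of what the lemma asserts.

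A minor slip: $\det(\bm a_1,X,\bm a_3)=+\langle X,\bm a_2\rangle$, not $-\langle X,\bm a_2\rangle$. With your sign you would get $\kappa_s=-\kappa\cos\theta$.

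A sanity check confirms the reversal. For $\theta\equiv 0$, $\gamma''=\kappa\bm n=\kappa\bm a_2$ lies in the limiting tangent plane spanned by $\bm a_1$ and $\bm a_2$. So the normal curvature must vanish while the singular curvature equals $\kappa$.

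The paper reaches the stated labels only because its proof interchanges the two formulas. It sets $\kappa_{\bm\nu}:=\det(f_s,f_{ss},\bm a_3)$ and $\kappa_s:=\langle f_{ss},\bm\nu\rangle$. Elsewhere the paper itself uses the standard assignment. It has $L=\langle f_{ss},\nu\rangle=\kappa\sin\theta+O(t)$. In Corollary~\ref{prop:DegConSumm}(iii)--(iv), the quantity called $\kappa_{\bm\nu}$ enters through $\kappa_0\sin\theta_0$.

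So you cannot finish a proof of the statement verbatim under the cited definitions. You have two options. You can say that the first two bullets must be interchanged, giving $\kappa_{\bm\nu}=\kappa\sin\theta$ and $\kappa_s=\kappa\cos\theta$. Alternatively, adopt the paper's nonstandard definitions explicitly; the verification is then the one-line frame computation you describe.
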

            \begin{proof}
                Since
                    $$\begin{array}{rcl}
                         f_{ttt} &=& (\bm{a}_1, \bm{a}_2, \bm{a}_3)\vecTri{0}{O^1(t)}{b_3 + O^1(t)} \\
                    \end{array}$$
                    $$\begin{array}{rcl}
                         f_{stt} &=& (\bm{a}_1, \bm{a}_2, \bm{a}_3)\qty{\matrixTT{0}{-\kappa\cos\theta}{-\kappa\sin\theta}{\kappa\cos\theta}{0}{-(\tau-\theta')}{\kappa\sin\theta}{\tau-\theta'}{0}\vecTri{0}{1 + O^1(t)}{O^1(t)} + \vecTri{0}{ O^1(t)}{O^1(t)}}\\
                         &=& (\bm{a}_1, \bm{a}_2, \bm{a}_3)\vecTri{\kappa\cos\theta+O^1(t)}{O^1(t)}{\tau-\theta'+O^1(t)}
                    \end{array}$$
                    $$\begin{array}{l}
                         f_{sss}\  =\\ 
                         (\bm{a}_1, \bm{a}_2, \bm{a}_3)\qty{\matrixTT{0}{-\kappa\cos\theta}{-\kappa\sin\theta}{\kappa\cos\theta}{0}{-(\tau-\theta')}{\kappa\sin\theta}{\tau-\theta'}{0}\vecTri{O^1(t)}{\kappa\cos\theta + O^1(t)}{\kappa\sin\theta + O^1(t)} + \vecTri{O^1(t)}{\kappa'\cos\theta - \kappa\theta'\sin\theta + O^1(t)}{\kappa'\sin\theta + \kappa\theta'\cos\theta + O^1(t)}}\\
                         =\ (\bm{a}_1, \bm{a}_2, \bm{a}_3)\vecTri{-\kappa^2(\cos^2\theta+\sin^2\theta + O^1(t))}{-\kappa(\tau-\theta')\sin\theta + \kappa'\cos\theta - \kappa\theta'\sin\theta + O^1(t)}{\kappa(\tau-\theta')\cos\theta + \kappa'\sin\theta + \kappa\theta'\cos\theta + O^1(t)}\\
                         =\ (\bm{a}_1, \bm{a}_2, \bm{a}_3)\vecTri{-\kappa^2+O^1(t)}{-\kappa\tau\sin\theta+\kappa'\cos\theta+O^1(t)}{\kappa\tau\cos\theta+\kappa'\sin\theta+O^1(t)},
                    \end{array}$$
                we compute according to the definition.
                    $$\begin{array}{c@{\hskip3pt}rcccl}
                         \bullet & \kappa_{\bm{\nu}} &=& \det(f_s,\ f_{ss},\ \bm{a}_3)|_{t=0} &=& \left.\qty|\begin{array}{ccc}
                              1 & 0 & 0 \\
                              0 & \kappa\cos\theta & 0 \\
                              0 & \kappa\sin\theta & 1 \\
                         \end{array}| + O^1(t)\right|_{t=0}\\
                         \bullet & \kappa_s &=& \langles{f_{ss},\bm{\nu}}|_{t=0} &=& \left.\langles{\kappa\cos\theta\bm{a}_2+\kappa\sin\theta\bm{a}_3,\bm{a}_3} + O^1(t)\right|_{t=0}\\
                         \bullet & \kappa_c &=& \det(f_s,\ f_{tt},\ f_{ttt})|_{t=0} &=& \left.\qty|\begin{array}{ccc}
                              1 & 0 & 0 \\
                              0 & 1 & 0 \\
                              0 & 0 & b_3 \\
                         \end{array}| + O^1(t)\right|_{t=0}\\
                         \bullet & \kappa_t &=& \det(f_s,\ f_{tt},\ f_{stt})|_{t=0} &=& \left.\qty|\begin{array}{ccc}
                              1 & 0 & \kappa\cos\theta \\
                              0 & 1 & 0 \\
                              0 & 0 & \tau-\theta' \\
                         \end{array}| + O^1(t)\right|_{t=0} \\
                         \bullet & \kappa_i &=& \det(f_s,\ f_{tt},\ f_{sss})|_{t=0} &=& \left.\qty|\begin{array}{ccc}
                              1 & 0 & -\kappa^2 \\
                              0 & 1 & -\kappa\tau\sin\theta+\kappa'\cos\theta \\
                              0 & 0 & \kappa\tau\cos\theta+\kappa'\sin\theta \\
                         \end{array}| + O^1(t)\right|_{t=0} \\
                    \end{array}$$
            \end{proof}
    
            The coefficient $b_3$, which determinant cupidal $S_k$ is geometric invariant $\kappa_c$.
            
            \begin{proof}[Proof of Proposition \ref{prop:DegCSk}.]
                According to \cite{SSk}, a map $f:\R^2\rightarrow\R^3$ is diffeomorphic to cuspidal $S_1$ singularity at $\bm{0}$ if and only if:
                \begin{itemize}
                    \item[(a)]  The origin $\bm{0}$ is a non-degenerate singular point and the null vector is transverse to $S(f)$ at $\bm{0}$,
                    \item[(b)] There exists a curve $c:(-\varepsilon,\varepsilon),\bm{0} \rightarrow \R^2,\bm{0}$ such that $c'(0)$ is parallel to $\eta(\bm{0})$, $\hat{c}'(0)=0$, $\hat{c}''(0)\not=0$ and there exists $l\in\R$ satisfying $\hat{c}'''(0) = l\hat{c}''(0)$ and the determinant $\det \qty(\gamma'\ \hat{c}'',\ 3\hat{c}^{(5)}-10lc^{(4)})(0)\not=0$, where $\hat{c}=f\circ c$, and
                    \item[(c)] The function $\psi(0)=\psi'(0)=0$ and $\psi''(0)\not=0$, where $\psi$ is the function defined by $\det \qty(\frac{d}{dt}\gamma(t), \nu|_{ S(f)}, d\nu_{S(f)}(\eta(t)))$.
                \end{itemize}
                Note that $S(f)=\{t=0\}$, $\eta =\partial_t$, so (a) holds automatically in Fukui's normal form. First, we consider (c). 
        
                    $$\gamma' =\bm{t}=\bm{a}_1$$
                    $$\nu|_{S(f)} = (O^1(t))\bm{a}_1 + (O^1(t))\bm{a}_2 + (1 + O^1(t))\bm{a}_3|_{S(f)} = \bm{a}_3$$
                    \begin{equation*}
                        \begin{split}
                            d&\nu_{S(f)}(\eta(t)) = (\nu_s,\ \nu_t)\left.\vecDual{0}{1}\right|_{S(f)} = (f_s,\ f_t)W\left.\vecDual{0}{1}\right|_{S(f)}\\
                            &= (\bm{a}_1,\bm{a}_2,\bm{a}_3)\qty(\begin{array}{cc}
                                 O^0(t) & O^2(s,t) \\
                                 O^0(t) & t + O^2(s,t) \\
                                 O^0(t) & O^2(s,t)
                            \end{array})\qty(\begin{array}{cc}
                                 * & O^1(t) \\
                                 * & \frac{1}{t}\qty(g(s) +O^1(t))
                            \end{array}).\left.\vecDual{0}{1}\right|_{S(f)}\\
                            &=(\bm{a}_1,\bm{a}_2,\bm{a}_3)\left.\qty(\begin{array}{cc}
                                 * & O^1(t) \\
                                 * & -g(s)+O^1(t)\\
                                 * & O^1(t)
                            \end{array})\vecDual{0}{1}\right|_{S(f)} =-g(s)\bm{a}_2,
                        \end{split}
                    \end{equation*}
                where $g(s)=\frac{b_{03}}{2}+\frac{b_{13}}{2}s+\frac{b_{23}}{4}s^2+O^3(s)$. Thus, $\psi(s)=g(s)$, $\psi(0)=b_{03}=b_3(0)$, $\psi'(0)=b_{13}=b_3'(0)$ and $\psi^{(2)}(s)=b_{23}=b_3^{(2)}$. That is $b_3'(0)=b_3''(0)=0$ and $b_3^{(2)}\not=0$. Based on this result, we may assume $b_{03}=b_{13}=0$.
            
                We show that the curve $c(x)=(\frac{c_2}{2}x^2+O^6(x),x)$, $c_2\not=0$, whose derivative at 0 $\frac{d}{dx}c(0)$ is parallel to $\partial_t$, satisfies condition (c). We substitute $c(x)$ into $f(s,t)$ term by term, using $f(s,t) = \gamma(s)+a(s,t)\bm{a}_2+b(s,t)\bm{a}_3$.
                    $$\gamma(s)=(\bm{e}_1,\bm{e}_2,\bm{e}_3)\vecTri{s + O^6_{\bm{w}}(s)}
                    {\frac{\kappa_0}{2}s^2 + O^6_{\bm{w}}(s)}
                    {O^6_{\bm{w}}(s)}$$
                    
                    $$a(s,t)\bm{a}_2(s)=(\bm{e}_1,\bm{e}_2,\bm{e}_3)\vecTri{-\frac{1}{2}\kappa_0\cos\theta_0st^2 + O^6_{\bm{w}}(s,t)}
                    {\frac{\cos\theta_0}{2}t^2 + \frac{\sin\theta_0}{2}(\tau_0-\theta_1)st^2 + O^6_{\bm{w}}(s,t)}
                    {-\frac{\sin\theta_0}{2}t^2 + \frac{\cos\theta_0}{2}(\tau_0-\theta_1)st^2 + O^6_{\bm{w}}(s,t)}$$
                    
                    $$b(s,t)\bm{a}_3(s)=(\bm{e}_1,\bm{e}_2,\bm{e}_3)\vecTri{O^6_{\bm{w}}(s,t)}
                    {\frac{b_{04}}{24}\sin\theta_0t^4 + \frac{b_{05}}{120}\sin\theta_0t^5 + O^6_{\bm{w}}(s,t)}
                    {\frac{b_{04}}{24}\cos\theta_0t^4 + \frac{b_{05}}{120}\cos\theta_0t^5 + O^6_{\bm{w}}(s,t)}$$
                We express these Taylor expansions in a weighted form with respect to $(s,t)$, assigning weights $\bm{w}=(2,1)$. The following are $\gamma$, $a\bm{a}_2$ and $b\bm{a}_3$ after substituting $c(x)$.
                    $$\gamma\circ c(x) = (\bm{e}_1,\bm{e}_2,\bm{e}_3)\vecTri{\frac{c_2}{2}x^2+O^6(x)}{\frac{c_2^2}{8}\kappa_0x^4 + O^6(x)}{O^6(x)}$$
                    $$a\bm{a}_2\circ c(x) = (\bm{e}_1,\bm{e}_2,\bm{e}_3)\vecTri{-\frac{c_2}{4}\kappa_0\cos\theta_0x^4 + O^6(x)}
                    {\frac{1}{2}\cos\theta_0x^2+\frac{c_2}{2}(\tau_0-\theta_1) \sin\theta_0 x^4 + O^5(x)}
                    {-\frac{1}{2}\sin\theta_0x^2+\frac{c_2}{4}(\tau_0-\theta_1)\cos\theta_0x^4 + O^6(x)}$$
                    $$b\bm{a}_3\circ c(x) = (\bm{e}_1,\bm{e}_2,\bm{e}_3)\vecTri{O^6(x)}{\frac{b_{04}}{24}\sin\theta_0x^4+\frac{b_{05}}{120}\sin\theta_0x^5 + O^6(x)}{\frac{b_{04}}{24}\cos\theta_0x^4 + \frac{b_{05}}{120}\cos\theta_0x^5 + O^6(x)}$$
                Thus, $\hat{c}'(0) = \left.\frac{d}{dx}(\gamma+a\bm{a}_2+b\bm{a}_3)\circ c\right|_{x=0}=\bm{0}$, $\hat{c}''(0)=c_2\bm{e}_1\cos\theta_0\bm{e}_2-\sin\theta_0\bm{e}_3$ and $\hat{c}'''(0)=\bm{0}$, so $l=0$ since $\hat{c}'''(0)=l\hat{c}''(0)$. We have $\gamma'(0)=\bm{t}(0)=\bm{e}_1$ from initial condition and 
                    $$3\hat{c}^{(5)}-10l\hat{c}^{(4)}=3\hat{c}^{(5)}=(\bm{e}_1,\bm{e}_2,\bm{e}_3)\vecTri{0}{b_{05}\sin\theta_0}{b_{05}\cos\theta_0}.$$
                Therefore, we obtain $\det \qty(\gamma'\ \hat{c}'',\ 3\hat{c}^{(5)}-10lc^{(4)})(0)=b_{05}$. That is, there exists a curve if $b_{05}\not=0$. Conversely, this calculation shows that there is no such curve if $b_{05}=0$.
            \end{proof}

\section{Via distance square function} \label{Sec:3}
    \subsection{Criterion for $A_k$, $D_k$, $E_k$-singularities}
        We consider a function $g:\R^2,\bm{0} \rightarrow \R, \bm{0}$ whose Taylor expansion at $(0,0)$ is
            $$g(u,v) = \frac{c_{20}}{2} u^2 + \frac{c_{02}}{2}v^2 + \sum_{i,j \geq 3} \frac{c_{ij}}{i!j!} u^iv^j,$$
        and denote homogeneous $\sum_{i+j = k} \frac{c_{ij}}{i!j!} u^i v^j$ of degree $k$.
        
        It is well known that $g$ has an $A_1$-singularity at \Origin{} if and only if the Hessian matrix of $g$ is of full rank, i.e., $c_{20}\not=0$ and $c_{02}\not=0$. If either $c_{20}$ or $c_{02}$ is 0, $g$ has a corank 1 singularity at $(0,0)$, and if both are 0, $g$ has a corank 2 singularity at \Origin{}.
    
        \begin{prop} \label{prop:ACriteria}
            We assume $g(u,v)$ has a corank 1 singularity, so we set $c_{20} = 0$, $c_{02} \not= 0$. Then\\
            (1) The function $g(u,v)$ has an $A_2$ singularity at \Origin{} if and only if $c_{30} \not= 0$.\\
            (2) The function $g(u,v)$ has an $A_3$ singularity at \Origin{} if and only if $c_{30} = 0$ and $c_{02}c_{40} - 3c_{21}^2 \not= 0$.\\
            (3) The function $g(u,v)$ has an $A_4$ singularity at \Origin{} if and only if $c_{30} = 0$, $c_{02}c_{40} - 3c_{21}^2 = 0$ and
                $$c_{02}^2c_{50} - 10 c_{02} c_{21} c_{31} + 15c_{12}c_{21}^2 \not=0.$$
            (4) The function $g(u,v)$ has an $A_5$ singularity at \Origin{} if and only if $c_{30} = 0$, $c_{02}c_{40} - 3c_{21}^2 = 0$, 
                $$c_{02}^2c_{50} - 10 c_{02} c_{21} c_{31} + 15c_{12}c_{21}^2 =0 \text{ and }$$
                $$c_{02}^2 (3c_{21}c_{41} + 2c_{31}^2) - 3c_{02}c_{21}(4c_{12}c_{31} + 3c_{21} c_{22}) + 3c_{21}^2(c_{03}c_{21} + 6c_{12}^2) \not = 0$$
        \end{prop}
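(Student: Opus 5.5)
Since $c_{02}\neq 0$, I would reduce $g$ to a function of one variable by the splitting lemma and then read off the $A_k$ type from the order of the resulting one-variable germ. Concretely, $g_v(u,v)=0$ can be solved for $v=\phi(u)$ near the origin by the implicit function theorem, with $\phi(0)=0$. The change of coordinates $v\mapsto v-\phi(u)$ puts $g$ into the form
$$\tilde g(u)+\tfrac12 Q(u,v)\,(v-\phi(u))^2,\qquad \tilde g(u):=g(u,\phi(u)),\quad Q(0,0)=c_{02}\neq0 .$$
Since $c_{02}\neq0$, this is right-equivalent to $\tilde g(u)\pm v^2$. Then $g$ has an $A_k$ singularity if and only if $\tilde g$ has order exactly $k+1$ at $0$, i.e.\ $\tilde g^{(j)}(0)=0$ for $j\le k$ and $\tilde g^{(k+1)}(0)\neq0$. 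So all four statements (1)--(4) reduce to computing the Taylor coefficients of $\tilde g$ up to order $6$.

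\textbf{Expanding $\phi$.} Write $\phi(u)=p_2u^2/2+p_3u^3/6+p_4u^4/24+\cdots$; there is no linear term because $c_{11}=0$ and $c_{20}=0$. I would solve $g_v(u,\phi(u))=0$ order by order. At order $u^2$ this gives
$$c_{02}p_2+c_{21}=0,\qquad p_2=-\frac{c_{21}}{c_{02}}.$$
At order $u^3$ it gives $c_{02}p_3/6+c_{31}/6+c_{12}p_2\cdot(\text{const})+\cdots=0$, so $p_3$ is determined. The same procedure at order $u^4$ determines $p_4$, which is only needed for (4).

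\textbf{Reading off the coefficients of $\tilde g$.} Substituting $v=\phi(u)$ into $g$ and collecting powers of $u$ should give:
\begin{itemize}
\item at order $u^3$: $c_{30}/6$, which gives (1);
\item at order $u^4$: $c_{40}/24+c_{21}p_2/4+c_{02}p_2^2/8$, which is proportional to $c_{02}c_{40}-3c_{21}^2$ after inserting $p_2$, giving (2);
\item at order $u^5$: $c_{50}$, $c_{31}p_2$, $c_{12}p_2^2$ and cross terms involving $p_3$.
\end{itemize}
For the $u^5$ coefficient I would use the envelope property: $g_v(u,\phi(u))=0$, so $\frac{d}{du}\tilde g=g_u(u,\phi(u))$. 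This means terms involving the highest-order coefficient of $\phi$ do not contribute at the critical order. With this, the $u^5$ coefficient is $c_{02}^{-2}$ times
$$c_{02}^2c_{50}-10c_{02}c_{21}c_{31}+15c_{12}c_{21}^2,$$
and this uses the vanishing conditions already imposed. That gives (3).

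\textbf{The main obstacle.} Statement (4) requires the $u^6$ coefficient, which involves $p_3$ and $c_{41}, c_{22}, c_{03}, c_{12}, c_{31}$. The bookkeeping is heavy. To control it I would:
\begin{itemize}
\item compute $\tilde g'(u)=g_u(u,\phi(u))$ and expand it to order $u^5$, which needs $\phi$ only up to order $u^3$;
\item use the earlier conditions $c_{30}=0$ and $c_{40}=3c_{21}^2/c_{02}$ to eliminate $c_{40}$;
\item use the $A_4$-vanishing condition to eliminate $c_{50}$.
\end{itemize}
After clearing the factor $c_{02}^{-3}$, the result should be the stated polynomial, and I would verify it against the monomial $c_{03}c_{21}^3$ as a check.

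The converse directions need no extra work: if the stated quantity vanishes, $\tilde g$ has higher order, so $g$ is not of the given $A_k$ type.
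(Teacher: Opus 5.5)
Your splitting-lemma reduction is correct, and for (1)--(3) it is the same idea as the paper's proof, carried out more economically. The paper constructs an explicit change $\varphi=(\mathrm{id},\varphi^2)$, listing every $\varphi^2_{ij}$ up to degree $5$, and reads off $j^6(g\circ\varphi)$. You only need the one-variable series $\phi$ and the critical-value function $\tilde g(u)=g(u,\phi(u))$. Your $u^3$, $u^4$ and $u^5$ coefficients agree with the paper's.

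\textbf{The gap is in (4).} The computation you describe cannot end at the displayed polynomial. The coefficient $[u^6]\tilde g$ contains $c_{60}/720$, coming from the monomial $\frac{c_{60}}{720}u^6$ of $g$, which the substitution $v=\phi(u)$ leaves untouched. $c_{60}$ is absent from your list of contributing coefficients. Carrying out your method with $p_2=-c_{21}/c_{02}$ and $p_3=-(c_{02}c_{31}-3c_{12}c_{21})/c_{02}^2$ gives
\[
[u^6]\,\tilde g=\frac{c_{60}}{720}+\frac{c_{41}p_2}{48}+\frac{c_{22}p_2^2}{16}+\frac{c_{03}p_2^3}{48}-\frac{c_{02}p_3^2}{72}=\frac{c_{02}^3c_{60}-5P}{720\,c_{02}^3},
\]
where $P$ denotes the polynomial displayed in (4). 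Hence (4) is false as stated:
\begin{itemize}
\item $g=\frac{c_{02}}{2}v^2+\frac{c_{60}}{720}u^6$ with $c_{60}\neq0$ is $A_5$, although $P=0$.
\item $g=\frac{c_{02}}{2}v^2+\frac{c_{31}}{6}u^3v+\frac{c_{60}}{720}u^6$ with $c_{02}c_{60}=10c_{31}^2\neq0$ equals $\frac{c_{02}}{2}\bigl(v+\frac{c_{31}}{6c_{02}}u^3\bigr)^2$. It satisfies every hypothesis of (4), including $P=2c_{02}^2c_{31}^2\neq0$, yet it is not $A_5$.
\end{itemize}
The correct last condition is $c_{02}^3c_{60}-5P\neq0$.

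The paper's proof has the same defect. Its $u^6$ coefficient of $j^6(g\circ\varphi)$ omits $c_{60}$, and it also has the opposite sign, which is harmless. That coefficient must equal $[u^6]\tilde g$, because a coordinate change fixing $u$ does not alter the critical value in $v$.

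There are two smaller slips in your plan for (4).
\begin{itemize}
\item If you work with $\tilde g'(u)=g_u(u,\phi(u))$, expanding to order $u^5$ does require $p_4$. The term $c_{21}uv$ of $g_u$ contributes $\frac{c_{21}p_4}{24}u^5$. Since $g_u$ contains no $c_{03}$ at all, the monomial $c_{03}c_{21}^3$ you intend to check against enters only through $p_4$. Your check would therefore fail if you truncated $\phi$ at order $3$ in that route.
\item The principle you want is stationarity. Because $g_v(u,\phi)=0$, truncating $\phi$ to $\phi_{\le 3}=\frac{p_2}{2}u^2+\frac{p_3}{6}u^3$ changes $g(u,\phi(u))$ only by $O(u^8)$. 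So direct substitution needs only $p_2,p_3$ for the $u^6$ term, and only $p_2$ for the $u^5$ term.
\end{itemize}
Also, $c_{40}$ and $c_{50}$ do not appear in $[u^6]\tilde g$ at all, so nothing needs to be eliminated. The earlier conditions only serve to make $u^6$ the leading term.
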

        \begin{proof}
            The proof is given by explicit coordinate changes $\varphi=(id_\R,\varphi^2):\R^2,0\rightarrow\R^2,0$ satisfying
                \begin{equation*}
                    \begin{split}
                        j^5 \varphi^2(u,v) = & v - \qty( \frac{c_{21}}{c_{02}} \frac{u^2}{2!} +\frac{c_{12}}{2c_{02}} uv + \frac{c_{03}}{3c_{02}} \frac{v^2}{2!} )\\
                         & - \qty( \frac{c_{02}c_{31} - 3c_{12}c_{21}}{c_{02}^2} \frac{u^3}{3!} + \frac{c_{02}c_{22} - 2c_{03}c_{21} - 3c_{12}^2}{4c_{02}^2} \frac{u^2v}{2!1!} + \frac{c_{02}c_{13} - 2c_{03}c_{12}}{3c_{02}^2} \frac{uv^2}{1!2!} + \frac{3c_{02}c_{04} - 5c_{03}^2}{12c_{02}^2} \frac{v^3}{3!})\\
                         & - \sum_{i+j = 4} \varphi^2_{ij}\frac{u^iv^j}{i!j!} + \sum_{i+j = 5} \varphi^2_{ij}\frac{u^iv^j}{i!j!}.
                    \end{split}
                \end{equation*}
            Here $\varphi^2_{ij}$ are given explicitly for $i+j=4,5$, as listed below.
                $$\begin{array}{ccl}
                    \varphi^2_{40} &= &  {1\over{a_{02}^ 3}} \qty{{{a_{02}^2a_{41} - a_{02}\left(4a_{12}a_{31}+6a_{21} a_{22}\right) + 3a_{21}(a_{03}a_{21} + 4a_{12}^2)}}}\\
                    
                    \varphi^2_{31} &= &  {1\over{8a_{02}^3}} \qty{{4a_{02}^2a_{32} -2 a_{02}\left(2a_{03}a_{31}+9 a_{12}a_{22}+6a_{13}a_{21}\right) + 15a_{12}(2a_{03}a_{21}+ a_{12}^2)}}\\
                    
                    \varphi^2_{22} &= &  {1\over{3a_{02}^3}} \qty{{a_{02}^2a_{23} - a_{02}\left(2a_{03}a_{22} + a_{04} a_{21} + 4a_{12}a_{13}\right) + 2a_{03}(a_{03}a_{21} + 3a_{12}^ 2)}}\\
                    
                    \varphi^2_{13} &= &  {1\over{24a_{02}^3}} \qty{{6a_{02}^2a_{14} - 5a_{02}\left(4a_{03}a_{13} + 3a_{04}a_{12}\right) + 35a_{03}^2a_{12}}}\\
                    
                    \varphi^2_{04} &= &  {1\over{45a_{02}^3}} \qty{{9a_{02}^2a_{05}-45a_{02}a_{03}a_{04}+40a_{03}^3 }}\\
                    
                \end{array}$$
                $$\begin{array}{ccl}
                    \varphi^2_{50} &= &  {1\over{a_{02}^4}} \{a_{02}^2\left(5a_{12}a_{41}+10a_{21}a_{32}+10 a_{22}a_{31}\right) \\
                    & & + a_{02}\left(\left(-10a_{03}a_{21}-20 a_{12}^2\right)a_{31}-60a_{12}a_{21}a_{22}-15a_{13} a_{21}^2\right)+45a_{03}a_{12}a_{21}^2+60a_{12}^3a_{21} \}\\
                    
                    \varphi^2_{41} &= &  {1\over{16a_{02}^4}} \{a_{02}^2\left(8a_{03}a_{41}+48a_{12}a_{32}+32 a_{13}a_{31}+48a_{21}a_{23}+36a_{22}^2\right)\\
                    & & +a_{02}\left(- 80a_{03}a_{12}a_{31}+\left(-120a_{03}a_{21}-180a_{12}^2 \right)a_{22}-24a_{04}a_{21}^2-240a_{12}a_{13}a_{21} \right)\\
                    & & +60a_{03}^2a_{21}^2+420a_{03}a_{12}^2a_{21}+105 a_{12}^4\}\\
                    
                    \varphi^2_{32} &= &  {1\over{3a_{02}^4}} \{a_{02}^2\left(2a_{03}a_{32}+a_{04}a_{31}+6a_{12} a_{23}+6a_{13}a_{22}+3a_{14}a_{21}\right) \\
                    & & +a_{02}\left(-2 a_{03}^2a_{31}-18a_{03}a_{12}a_{22}+\left(-12a_{03}a_{13}-9 a_{04}a_{12}\right)a_{21}-18a_{12}^2a_{13}\right)+24a_{03} ^2a_{12}a_{21}+24a_{03}a_{12}^3\}\\
                    
                    \varphi^2_{23} &= &  {1\over{48a_{02}^4}} \{a_{02}^2\left(40a_{03}a_{23}+30a_{04}a_{22}+12 a_{05}a_{21}+60a_{12}a_{14}+40a_{13}^2\right)\\
                    & & +a_{02}\left(- 70a_{03}^2a_{22}-70a_{03}a_{4}a_{21}-280a_{03}a_{12} a_{13}-105a_{04}a_{12}^2\right)+70a_{03}^3a_{21}+315a_{3}^2 a_{12}^2\}\\
                    
                    \varphi^2_{14} &= &  {1\over{45a_{02}^4}} \{a_{02}^2\left(45a_{03}a_{14}+45a_{04}a_{13}+27 a_{05}a_{12}\right)+a_{02}\left(-120a_{03}^2a_{13}-180a_{03} a_{04}a_{12}\right)+200a_{03}^3a_{12}\}\\
                    
                    \varphi^2_{05} &= &  {1\over{144a_{02}^4}} \{a_{02}^2\left(168a_{03}a_{05}+105a_{04}^2\right)-630 a_{02}a_{03}^2a_{04}+385a_{03}^4\}\\
                \end{array}$$
            Then, the composition of the function $g$ with the coordinate change $\varphi$ is given as follows:
                \begin{equation*}
                    \begin{split}
                        j^6 g\circ \varphi (u,v) = & \ c_{20} \frac{u^2}{2} + c_{02}\frac{v^2}{2} + c_{30}\frac{u^3}{3!} + \frac{c_{02}c_{40} - 3c_{21}^2}{24c_{02}} u^4 + \frac{c_{02}^2c_{50} - 10 c_{02} c_{21} c_{31} + 15c_{12}c_{21}^2}{120c_{02}^2} u^5\\
                        & + \frac{c_{02}^2 (3c_{21}c_{41} + 2c_{31}^2) - 3c_{02}c_{21}(4c_{12}c_{31} + 3c_{21} c_{22}) + 3c_{21}^2(c_{03}c_{21} + 6c_{12}^2)}{144c_{02}^3} u^6
                    \end{split}
                \end{equation*}
        \end{proof}
    
        \begin{prop}\label{prop:DCriterion}
            We assume $g(u,v)$ has a corank 2 singularity, so we set $c_{20} = c_{02} = 0$. Then $g(u,v)$ has a $D_4$ singularity at \Origin{} if and only if
                $$\qty|\begin{array}{cccc}
                     c_{30} & 2c_{21} & c_{12} & 0  \\
                     0 & c_{30} & 2c_{21} & c_{12} \\
                     c_{21} & 2c_{12} & c_{03} & 0 \\
                     0 & c_{21} & 2c_{12} & c_{03} \\
                \end{array}| \not = 0.$$
        \end{prop}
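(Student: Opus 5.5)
The plan is to reduce to the classical fact that a function germ with vanishing $2$-jet is $\mathcal{R}$-equivalent to a $D_4$ singularity exactly when its cubic part has three distinct linear factors over $\mathbb{C}$. The cubic part must therefore have nonzero discriminant.

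Since $c_{20}=c_{02}=0$, and presumably $c_{11}=0$ as well (the expansion of $g$ is written with no $uv$ term), the $2$-jet of $g$ vanishes. Write the cubic part as
$$j^3g = \frac{1}{6}\bigl(c_{30}u^3+3c_{21}u^2v+3c_{12}uv^2+c_{03}v^3\bigr) =: \frac16 P(u,v).$$

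\textbf{Sufficiency of a nondegenerate cubic.} A binary cubic with no repeated factor is $3$-determined with respect to $\mathcal{R}$. This holds because the Jacobian ideal of such a cubic contains $\mathfrak{m}^2$, so every term of order $\ge 4$ lies in $\mathfrak{m}^2 J(P)$. By Mather's lemma, or the standard finite-determinacy theorem, $g$ is then $\mathcal{R}$-equivalent to $P$. A linear change of coordinates, over $\mathbb{R}$, takes $P$ to $u^3\pm uv^2$ or to $u^2v\pm v^3$, which is $D_4^\pm$.

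\textbf{Necessity of a nondegenerate cubic.} Conversely, suppose $P$ has a repeated factor, or $P\equiv 0$. Then the singularity is of type $D_k$ with $k\ge5$, or $E_6$, or worse, since the $\mu$-invariant exceeds $4$. So $g$ has a $D_4$ singularity if and only if $P$ has no repeated linear factor.

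\textbf{Identifying the determinant.} The remaining step is to see that the displayed $4\times4$ determinant is the resultant $\mathrm{Res}(P_u,P_v)$, up to a nonzero constant. The partial derivatives are
$$P_u/3=c_{30}u^2+2c_{21}uv+c_{12}v^2,\qquad P_v/3=c_{21}u^2+2c_{12}uv+c_{03}v^2.$$
The Sylvester matrix of these two binary quadratics is exactly the displayed matrix. Their resultant vanishes iff $P_u$ and $P_v$ have a common projective root. By Euler's identity $3P=uP_u+vP_v$, a common root of $P_u$ and $P_v$ is the same as a multiple root of $P$. The degenerate case $P\equiv0$ also gives determinant zero, as does the case where both quadratics have vanishing leading coefficients, since the root at infinity $v=0$ is then shared.

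\textbf{Expected difficulty.} The only delicate point is the resultant–discriminant correspondence at the root at infinity: handling the homogeneous form shows that no case is lost. I would verify it by checking that the determinant equals $-27\,c_{30}^{-1}$ times the discriminant-type expression when $c_{30}\ne0$, or simply by citing the homogeneous Sylvester resultant theory.
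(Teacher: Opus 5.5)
Your proposal is correct and follows the paper's route. The paper's proof only says that $g$ is of type $D_4$ exactly when the cubic form $H_3$ has simple roots, i.e.\ nonzero discriminant. Your identification of the displayed matrix as the Sylvester matrix of $P_u/3$ and $P_v/3$, combined with Euler's identity, supplies the step the paper leaves implicit.

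Your side remark about the constant is off. The determinant is exactly
$$(c_{30}c_{03}-c_{21}c_{12})^2-4(c_{30}c_{12}-c_{21}^2)(c_{21}c_{03}-c_{12}^2),$$
which equals $-\frac{1}{27}$ times the usual discriminant of $P(x,1)$, with no factor $c_{30}^{-1}$.

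One justification needs repair. $J(P)$ does not contain $\mathfrak{m}^2$: that would force $\mu\le 3$, whereas $\mu(D_4)=4$. What is true is the following. Since $P_u$ and $P_v$ are coprime, the cubics $uP_u, vP_u, uP_v, vP_v$ are linearly independent, so they span all cubics. This gives $\mathfrak{m}^3\subseteq\mathfrak{m}J(P)$, hence $\mathfrak{m}^4\subseteq\mathfrak{m}^2J(P)$, which is exactly the $3$-determinacy you need.
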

    
        \begin{proof}
            A function $g$ is $\mathcal{R}$-equivalent to $(x^2 \pm y^2)y$, that is, $g$ has a $D_4$ singularity, if and only if a cubic form $H_3(u,v)$ of $g$ has only simple roots. Thus, the discriminant of $H_3(u,v)$ is not zero.
        \end{proof}
    
        \begin{prop}\label{prop:ECriteria}
            We assume $g(u,v)$ has a corank 2 singularity, so let $c_{20} = c_{02} = 0$ and the coefficients $c_{21}$, $c_{12}$ and $c_{03}$ are also 0. Then \\
            (1) The function $g(u,v)$ has a $E_6$ singularity at \Origin{} if and only if $c_{30} \not= 0$ and $c_{04}\not=0$.\\
            (2) The function $g(u,v)$ has a $E_7$ singularity at \Origin{} if and only if $c_{30} \not= 0$, $c_{04} =0$ and $c_{14}\not=0$.\\
            (3) The function $g(u,v)$ has a $E_8$ singularity at \Origin{} if and only if $c_{30} \not= 0$, $c_{04} = c_{13} = 0$, $c_{05} \not= 0$.
        \end{prop}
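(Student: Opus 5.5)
The plan is to reduce $g$ to a normal form of the shape $\pm u^3 + \varphi(v)$ with explicit control of the $v$-terms, and then read off the type from the standard $\mathcal{R}$-normal forms $E_6: u^3\pm v^4$, $E_7: u^3+uv^3$, $E_8: u^3+v^5$. Under the hypotheses $c_{20}=c_{02}=c_{21}=c_{12}=c_{03}=0$, the cubic part is $H_3 = \frac{c_{30}}{6}u^3$. If $c_{30}\neq 0$, this is a perfect cube, so $g$ falls into the $E$-series rather than the $D$-series.

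First I would assign weights. For $E_6$ take $\mathrm{wt}(u)=4$, $\mathrm{wt}(v)=3$, so the principal part is $\frac{c_{30}}{6}u^3+\frac{c_{04}}{24}v^4$. Every other monomial $u^iv^j$ with $i+j\ge 4$ and $(i,j)\ne(0,4)$ has weight $4i+3j>12$, except possibly those with $i+j=3$, and these vanish apart from $u^3$. Hence, if $c_{30}c_{04}\ne 0$, the principal part is nondegenerate and $g$ is semi-quasihomogeneous. By the standard finite-determinacy theorem (Arnold), $g$ is $\mathcal{R}$-equivalent to $u^3\pm v^4$, which is $E_6$. For the converse, $E_6$ has Milnor number $6$. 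If $c_{04}=0$, the Newton diagram shows $\mu\ge 7$, and if $c_{30}=0$ the singularity is not in the $E$-series at all, since the 3-jet vanishes and the corank is 2, giving $\mu\ge 9$ or worse.

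Next, for $E_7$ assume $c_{04}=0$ and use weights $\mathrm{wt}(u)=3$, $\mathrm{wt}(v)=2$, so that $u^3$ and $uv^3$ both have weight $9$. The relevant term is $uv^3$, with coefficient $\frac{c_{13}}{6}$. I would check here whether the statement's "$c_{14}$" is a typo for $c_{13}$; the weighted homogeneity forces the condition to be $c_{13}\ne 0$. The remaining monomials of weight $\le 9$ are $v^4$ (weight 8, killed by $c_{04}=0$), $u^2v$, $uv^2$, $v^3$ (all zero by hypothesis), and $u^3$. Everything else has weight $>9$; for example $v^5$ has weight 10. So $u^3+uv^3$, which has an isolated singularity, gives semi-quasihomogeneity and hence $E_7$.

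For $E_8$, with $c_{04}=c_{13}=0$, use weights $(5,3)$: $u^3$ and $v^5$ both have weight $15$. The monomials $u^2v^2$ (weight 16) and $uv^3$ (weight 14, zero by hypothesis) must be checked, as must $v^4$ (weight 12, zero). I would also check $u^2v$ (weight 13), $uv^2$, $v^3$ (zero), and $uv^4$ (weight 17). So the principal part is $\frac{c_{30}}{6}u^3+\frac{c_{05}}{120}v^5$, and $c_{30}c_{05}\ne 0$ gives $E_8$.

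For the "only if" directions, I would argue via Milnor numbers or via the Newton polygon: if the relevant coefficient vanishes, the next stratum in Arnold's classification is reached, and $\mu$ strictly increases. The main obstacle is purely bookkeeping. One must verify that no lower-weight monomial survives in each case, including the $u^4$ term, which has weight $16>12$, $12>9$, and $20>15$ respectively and is therefore harmless. One must also handle the apparent index mismatch in the $E_7$ condition.
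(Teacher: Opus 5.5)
Your argument is correct but runs differently from the paper's, and you are right that the $E_7$ condition must read $c_{13}\neq 0$. As literally stated, (2) fails: $u^3+uv^3$ is $E_7$ with $c_{14}=0$, while $u^3+uv^4$ has $c_{14}\neq 0$ but $\mu=10$. The paper's own proof confirms the $c_{13}$ reading, since its $E_7$ condition (``the triple root is a simple root of $H_4$'') means $c_{04}=0\neq c_{13}$. So does its later use of $b_{13}\neq 0$ in Proposition~\ref{prop:corank2Deg}(iii).

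The paper simply quotes Arnold's determinator without further justification. It phrases the conditions as the multiplicity of the kernel line $u=0$ of $H_3=\frac{c_{30}}{6}u^3$ as a root of $H_4$ and $H_5$. You instead prove sufficiency directly:
\begin{itemize}
\item you choose weights $(4,3)$, $(3,2)$, $(5,3)$;
\item you check that the principal part is nondegenerate and that every other surviving monomial (including $u^4$, $u^2v^2$, $uv^4$) lies strictly above it;
\item you invoke semi-quasihomogeneous determinacy, which applies without moduli because the local algebras of $E_6,E_7,E_8$ have no upper monomials.
\end{itemize}
Necessity then comes from $\mathcal{R}$-invariance of $\mu$.

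The paper's formulation is shorter and is exactly the form used when the criterion is applied to $\Phi$. Yours is in effect a proof of the determinator in this special case, and it makes the ``only if'' directions checkable, which the paper leaves implicit. When you write those up, make the case split explicit instead of appealing to ``the next stratum'':
\begin{itemize}
\item $c_{30}=0$ means the $3$-jet of $g$ vanishes, so $\mu\geq 9$.
\item If $c_{30}\neq 0$, then $c_{04}\neq 0$, or $c_{04}=0\neq c_{13}$, or $c_{04}=c_{13}=0\neq c_{05}$ give $\mu=6,7,8$ respectively, by your sufficiency arguments.
\item In the residual case $c_{30}\neq 0$, $c_{04}=c_{13}=c_{05}=0$, the Newton polygon lies on or above the segment from $(3,0)$ to $(0,6)$, so $\mu\geq 10$ (or $\mu=\infty$).
\end{itemize}
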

    
        \begin{proof}
            The cubic form $H_3(u,v)$ of $g$ has a triple root $(u,v) = (0, s)$ for all $s\in \R$ since $g(u,v) = \frac{c_{30}}{6} u^3 + O^4(u,v)$. Then $g$ is $\mathcal{R}$-equivalent to $E_6$-singularity if and only if a triple root is not a root of the quadric form $H_4(u,v)$ of $g(u,v)$.  The triple root is a simple root of $H_4(u,v)$ if and only if it is an $E_7$-singularity, and the triple root is a double root of $H_4(u,v)$ and not a root of $H_5(u,v)$ if and only if $E_8$-singularity. 
        \end{proof}
    
    \subsection{Singularities of the distance squared unfolding}
        Let $f:\R^2,\bm{0}\rightarrow \R^3,\bm{0}$ be parameterized in Fukui's normal form.
        To study singularities of parallel surfaces of $f$, we consider the \textbf{distance squared unfolding} $\Phi:\R^2\times\R^3\times\R \rightarrow \R$ defined by $(s,t,x,y,z, \varepsilon) \mapsto || (x,y,z) - f(s,t) || - \varepsilon^2$.
    
        The discriminant set $\mathcal{D}(\varphi)$ is given by
            $${\small \begin{array}{@{\hskip1pt}cc@{\hskip3pt}c@{\hskip3pt}l}
                & \mathcal{D}(\Phi) & := & \qty{ (\bm{p}, \varepsilon) \in \mathbb{R}^4 \ |\  \Phi = \pardiff{\Phi}{s}=\pardiff{\Phi}{t}=0 \text{ for some } (s,t)\in\R^2 }\  \\
                & & = & \qty{ (\bm{p}, \varepsilon) \in \mathbb{R}^4 \ |\  \left\langle \pardiff{f}{s}, f(s,t)-\bm{p} \right\rangle =  \left\langle \pardiff{f}{t}, f(s,t)-\bm{p} \right\rangle = \Phi = 0 \text{ for some } (s,t)\in\R^2} \\
                & & = & \qty{ (\bm{p}, \varepsilon) \in \mathbb{R}^4 \ |\  f(s,t) - \bm{p} = ^\exists \lambda \nu, \ || f(s,t) - \bm{p} ||^2 = \varepsilon^2 \text{ for some } (s,t)\in\R^2} \\
                & & = & {\qty{ (\bm{p}, \varepsilon) \in \mathbb{R}^4 \ |\  \bm{p} = f(s,t) - \varepsilon \bm{\nu} \text{ for some } (s,t)\in\R^2}},
            \end{array}}$$
        where $\bm{\nu}$ is the unit normal vector field of $f$. Its intersection with a hypersurface $\varepsilon = \varepsilon_0$ gives a parallel surface of $f$ at distance $\varepsilon_0$. This explains why identifying singularities of $\Phi$ is crucial.
        
        The 2-jet of $\Phi$ is
            $$j^2 \Phi = \Phi(s,t, \bm{p}, \varepsilon) = -\varepsilon^2 + p^2 +q^2 +r^2 - 2ps + (-\kappa_0q + 1)s^2 + (-q\cos\theta_0 + r\sin\theta_0)t^2,$$
        where $\bm{p} = (p,q,r)\in \R^3$. Then $\Phi$ is $\mathcal{R}$-equivalent to an $A_1$-singularity at \Origin{} if and only if $p=0$, $q\not=\frac{1}{\kappa_0}$ and $-q\cos\theta_0 + r \sin\theta_0 \not= 0$. 
    
        We denote it by a $A^t_k$ or $A^s_k$ singularity that the $\Phi$ is degenerated to a corank 1 singularity if a kernel direction of the Hessian for $\Phi$ is an $s$ direction or a $t$ direction, respectively.
        
        \begin{prop} \label{prop:AksDeg}
            Let $f$ be parameterized in Fukui's normal form and have a cuspidal cross cap at \Origin{} and let $\Phi$ be a distance squared unfolding. Then, \vspace{-0.25em}
            \begin{itemize}
                \item[\ronumi] The distance squared unfolding $\Phi$ is $\mathcal{R}$-equivalent to $A_1^{s,\pm}$ singularity, $s^2 \pm t^2$ if and only if $-q\cos\theta_0 + r\sin\theta_0 \not = 0$ and $q\not=\frac{1}{\kappa_0}$.  \vspace{-0.5em}
                \item[\ronumii] The distance squared unfolding $\Phi$ is $\mathcal{R}$-equivalent to $A_2^{s,\pm}$ singularity, $s^3 \pm t^2$ if and only if $-q\cos\theta_0 + r\sin\theta_0 \not = 0$, $q=\frac{1}{\kappa_0}$ and $r\not=-\frac{\kappa_1}{\kappa_0^2\tau_0}$. \vspace{-0.5em}
                \item[\ronumiii] The distance squared unfolding $\Phi$ is $\mathcal{R}$-equivalent to $A_3^{s,\pm}$ singularity, $s^4 \pm t^2$ if and only if $-q\cos\theta_0 + r\sin\theta_0 \not = 0$, $q=\frac{1}{\kappa_0}$, $r=-\frac{\kappa_1}{\kappa_0^2\tau_0}$ and \vspace{-0.5em}
                    $$\kappa_0^2\tau_0^3 + \kappa_0(\kappa_1\tau_1-\kappa_2\tau_0) + 2\kappa_1^2\tau_0 \not = 0.$$
            \end{itemize}
        \end{prop}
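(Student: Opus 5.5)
The plan is to expand $\Phi$ at a fixed target point $\bm{p}=(p,q,r)$ as a function germ in $(s,t)$ and apply Proposition \ref{prop:ACriteria}, with the roles of the variables arranged so that the kernel direction is $s$. In that proposition's notation, $u=s$ and $v=t$. Since $f$ is in Fukui's normal form with a cuspidal cross cap, we have $b_3(0)=0$ and $b_3'(0)\neq 0$ (Fact \ref{fact:INV}). We also have $\bm{f}_1=0$, $a_2=1$, $b_2=0$ and $a_3=0$ (Lemma \ref{lem:FukuiFormCoeff}). So $f(s,t)=\gamma(s)+\tfrac{t^2}{2}\bm{a}_2(s)+b(s,t)\bm{a}_3(s)+O(t^4)$, where $b(s,t)=b_{13}\tfrac{st^3}{6}+b_{04}\tfrac{t^4}{24}+\cdots$. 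Substituting this together with the expansions (\ref{eq:a2}), (\ref{eq:a3}) and (\ref{eq:gamma}) into $\Phi=\|f-\bm{p}\|^2-\varepsilon^2$ gives the coefficients $c_{ij}$ directly.

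The first step is the $2$-jet. It is already recorded before the statement: the linear term $-2ps$ forces $p=0$. The Hessian is then diagonal, with $c_{20}=2(1-\kappa_0 q)$ and $c_{02}=2(r\sin\theta_0-q\cos\theta_0)$. This gives (i): the singularity is $A_1$ exactly when both entries are nonzero, and the sign of $c_{20}c_{02}$ gives the $\pm$. For (ii) and (iii) we impose $c_{02}\neq 0$ and $q=1/\kappa_0$, so $c_{20}=0$ and the kernel is the $s$-direction. Proposition \ref{prop:ACriteria} then applies after rescaling by $c_{02}$.

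The pure-$s$ coefficients $c_{k0}$ are the derivatives of $\|\gamma(s)-\bm{p}\|^2$, that is, the classical distance-squared function of the space curve $\gamma$. From (\ref{eq:gamma}), $c_{30}$ is proportional to $-\langle \gamma''',\bm{p}\rangle$ plus the correction from $\langle\gamma',\gamma''\rangle$. At $q=1/\kappa_0$ this reduces to a multiple of $\kappa_1 q+\kappa_0\tau_0 r$. Its vanishing is $r=-\kappa_1/(\kappa_0^2\tau_0)$, which proves (ii). For (iii) I compute $c_{40}$ from the $s^4$ term of (\ref{eq:gamma}) and the cross term $\langle\gamma'',\gamma''\rangle$.

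I also need the mixed coefficients $c_{21}$, $c_{12}$ and $c_{31}$. Because $f-\gamma$ starts at order $t^2$, every monomial $s^it$ with odd power $t^1$ has zero coefficient, so $c_{21}=c_{31}=0$. This is why the $A_3$ condition $c_{02}c_{40}-3c_{21}^2\neq 0$ collapses to $c_{40}\neq 0$, a condition involving only the curve invariants $\kappa_0,\kappa_1,\kappa_2,\tau_0,\tau_1$. Substituting $q=1/\kappa_0$ and $r=-\kappa_1/(\kappa_0^2\tau_0)$ into $c_{40}$ and clearing denominators should produce $\kappa_0^2\tau_0^3+\kappa_0(\kappa_1\tau_1-\kappa_2\tau_0)+2\kappa_1^2\tau_0$.

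The main obstacle is this last simplification. The $s^4$ coefficient of $\gamma$ mixes $\kappa_2$, $\kappa_0\kappa_1$ and $\kappa_0\tau_0^2$ terms, and it must be combined with $\|\gamma''\|^2$ and $\langle\gamma',\gamma'''\rangle$ with the correct factorials. I would carry this out carefully, and cross-check it against the classical criterion for $A_3$ contact of a curve with a sphere at its osculating-sphere center. A second point to check is the claim $c_{21}=0$: it rests on the vanishing of $\bm{f}_1$ and $a_3$, and I would verify this explicitly. The $\pm$ superscripts throughout come from the sign of $c_{02}$ relative to the leading $s$-coefficient.
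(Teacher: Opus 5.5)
Your proposal is correct and follows the paper's route. The paper computes the Taylor coefficients $\Phi_{ij}$ at $\bm p=(0,q,r)$ and substitutes them into Proposition~\ref{prop:ACriteria}; since $\Phi_{21}=\Phi_{31}=0$, (ii) and (iii) reduce to $c_{30}\neq0$ and $c_{40}\neq0$ for the distance-squared function of $\gamma$. Your observation that $f-\gamma=O(t^2)$ kills every $s^it$ coefficient is exactly why those mixed terms vanish.

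One caution for the step you flag as the main obstacle. In (\ref{eq:gamma}), the $\bm n$-component of the $s^4$ coefficient is misprinted and should be $\kappa_2-\kappa_0^3-\kappa_0\tau_0^2$. With that sign, the $\kappa_0^2$ terms cancel against $8\langle\gamma''',\gamma'\rangle+6\langle\gamma'',\gamma''\rangle=-2\kappa_0^2$. The result is then precisely the osculating-sphere vertex condition $\rho\tau+(\rho'\sigma)'\neq0$, with $\rho=1/\kappa$ and $\sigma=1/\tau$, which you planned to cross-check against.
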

    
        \begin{rmk}
            These conditions depend only on the information of the singular curve $\gamma$ of the surface $f$, so the same conditions hold for parallel curves. Furthermore, a square of the distance $\varepsilon^2$ that the parallel surface $f^\varepsilon$ degenerates equals $p^2 + q^2 + r^2$. Hance, $\varepsilon^2 = \frac{\kappa_0^2 \tau_0^2 + \kappa_1^2}{\kappa_0^4 \tau_0^2}$ holds if $\Phi$ is $\mathcal{R}$-equivalent to $s^4 \pm t^2$ or worse.
        \end{rmk}
    
        \begin{prop}\label{prop:AktDeg}
             Let $f$ be parameterized in Fukui's normal form and have a cuspidal cross cap at \Origin{} and $\Phi$ be the distance squared unfolding. Then, \vspace{-0.25em}
             \begin{itemize}
                 \item[\ronumi] The distance squared unfolding $\Phi$ is $\mathcal{R}$-equivalent to $A_1^{t,\pm}$ singularity, $s^2 \pm t^2$, if and only if $q\not=\frac{1}{\kappa_0}$ and $-q\cos\theta_0 + r\sin\theta_0 \not = 0$.  \vspace{-0.5em}
                 \item[\ronumii] The distance squared unfolding $\Phi$ is $\mathcal{R}$-equivalent to $A_3^{t,\pm}$ singularity, $s^2 \pm t^4$, if and only if $q\not=\frac{1}{\kappa_0}$, $-q\cos\theta_0 + r\sin\theta_0 = 0$. \vspace{-0.25em}
                    \begin{equation} \label{eq:C2}
                        C_2\qty(\frac{r}{\cos\theta_0}):=\qty(\frac{r}{\cos\theta_0})^2 (\tau_0-\theta_1)^2 - \qty(\frac{b_{04}}{3}\frac{r}{\cos\theta_0} - 1)\qty(\frac{r}{\cos\theta_0} \kappa_0\sin\theta_0 -1 )\not=0, \vspace{-0.25em}
                    \end{equation}
                where $C(\varepsilon)=\varepsilon^2(\tau_0-\theta_1)^2 - \qty(\frac{b_{04}}{3}\varepsilon + 1)(\varepsilon \kappa_0\sin\theta_0 + 1)$.
                 \item[\ronumiii] The distance squared unfolding $\Phi$ is $\mathcal{R}$-equivalent to $A_4^{t,\pm}$ singularity, $s^2 \pm t^5$, if and only if $q\not=\frac{1}{\kappa_0}$, $-q\cos\theta_0 + r\sin\theta_0 = 0$, $C_2\qty(\frac{r}{\cos\theta_0}) = 0$ and  \vspace{-0.25em}
                    $$\qty{b_{13}(\tau_0-\theta_1) - \frac{b_{05}}{10}\kappa_0\sin\theta_0}\frac{r}{\cos\theta_0} + \frac{b_{05}}{10} \not= 0. \vspace{-0.25em}$$
             \end{itemize}
        \end{prop}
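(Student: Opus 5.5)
The strategy is to reduce everything to the corank-one criteria of Proposition~\ref{prop:ACriteria}, applied to the function $g(s,t)=\Phi(s,t,\bm{p},\varepsilon)$ at a fixed parameter $(\bm{p},\varepsilon)$ with $p=0$.

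\textbf{Setting up the jet.} First I would write down the $6$-jet of $g$ (weighted jets suffice) using $f=\gamma+a\bm{a}_2+b\bm{a}_3$. For $\gamma$, $\bm{a}_2$, $\bm{a}_3$ I would use the expansions \eqref{eq:gamma}, \eqref{eq:a2}, \eqref{eq:a3}. For $a$ and $b$ I would use Lemma~\ref{lem:FukuiFormCoeff}, with $a_2=1$, $a_3=0$ and $a_4=-\frac34 b_3^2$. Since $f$ is a cuspidal cross cap, Fact~\ref{fact:INV} gives $b_{03}=0$ and $b_{13}\neq 0$.

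Expanding
$$g=\|f\|^2-2\langle f,\bm{p}\rangle+\|\bm{p}\|^2-\varepsilon^2,$$
the $2$-jet displayed before Proposition~\ref{prop:AksDeg} shows that $c_{20}=2(1-\kappa_0 q)$ and $c_{02}=2(-q\cos\theta_0+r\sin\theta_0)$ (suitably normalized). So we are in case $c_{20}\neq 0$ and $c_{02}=0$, with the kernel in the $t$-direction. I will apply Proposition~\ref{prop:ACriteria} with the roles of $u$ and $v$ exchanged: $u\leftrightarrow t$ and $v\leftrightarrow s$.

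\textbf{Identifying the normal direction.} The condition $-q\cos\theta_0+r\sin\theta_0=0$ says that $\bm{p}$ lies in the plane spanned by $\bm{e}_1$ and $\bm{a}_3(0)=(0,\sin\theta_0,\cos\theta_0)$. Combined with $p=0$, this forces $\bm{p}=\lambda\bm{a}_3(0)$ with $\lambda=r/\cos\theta_0$. This identifies the natural distance parameter, and explains why $C_2$ is evaluated at $r/\cos\theta_0$. The sign convention $\bm{p}=f-\varepsilon\bm{\nu}$ relates $\lambda=-\varepsilon$, which accounts for the sign flips between $C_2(r/\cos\theta_0)$ and $C(\varepsilon)$.

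\textbf{The swapped invariants.} I would then compute the coefficients needed:
\begin{itemize}
\item $c_{03}$ (the pure $t^3$ coefficient) vanishes automatically. The $t^3$ terms of $f$ lie only in the $b_3$ component, and $b_{03}=0$.
\item Hence $A_2$ never occurs, and the first condition is the $A_3$ criterion $c_{20}c_{04}-3c_{12}^2\neq 0$, in swapped notation.
\item $c_{04}$ collects three contributions: $\|f\|^2$ at order $t^4$ (giving $\frac14\cdot 4!$ from $a^2=t^4/4$), the $-2\lambda\langle a_4t^4/24\,\bm{a}_2+b_4t^4/24\,\bm{a}_3,\bm{a}_3(0)\rangle$ term (giving $-2\lambda b_{04}$), and $a_4=0$ at $s=0$. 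So $c_{04}$ is proportional to $1-\lambda b_{04}/3$.
\item $c_{12}$ comes from the $st^2$ term. Here $\langle \bm{a}_2'(0),\bm{a}_3(0)\rangle=\tau_0-\theta_1$, and the $\gamma$-$a\bm{a}_2$ cross term vanishes at this order, so $c_{12}$ is proportional to $\lambda(\tau_0-\theta_1)$.
\item $c_{20}$ is proportional to $1-\lambda\kappa_0\sin\theta_0$ after substituting $q=\lambda\sin\theta_0$.
\end{itemize}
Assembling these, $c_{20}c_{04}-3c_{12}^2$ is, up to a nonzero constant, exactly $-C_2(\lambda)$. This gives (ii).

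\textbf{The $A_4$ step (the main obstacle).} For (iii) I need the quintic invariant
$$c_{20}^2c_{05}-10c_{20}c_{12}c_{13}+15c_{21}c_{12}^2,$$
again in swapped notation. This requires $c_{05}$, which involves $b_{05}$ and possibly $a_5=-2b_3b_4$, the latter vanishing at $s=0$. It also requires $c_{13}$, which comes from $b_{13}\,st^3$ together with the frame derivative, and $c_{21}$, which comes from $s^2t$ terms and should vanish since $f$ has no $s^2t$ term.

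The expected simplification is that $c_{21}=0$ kills the last term. The remaining expression should then reduce, after using $C_2(\lambda)=0$ to eliminate one factor, to a multiple of
$$\Bigl(b_{13}(\tau_0-\theta_1)-\tfrac{b_{05}}{10}\kappa_0\sin\theta_0\Bigr)\lambda+\tfrac{b_{05}}{10},$$
times $c_{20}$. The delicate part is bookkeeping the mixed terms $st^3$ coming from $\bm{a}_3'(0)=-\kappa_0\sin\theta_0\bm{a}_1-(\tau_0-\theta_1)\bm{a}_2$, and the division by $c_{20}\neq0$.

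The conditions in (i) are just the Morse case of the $2$-jet.
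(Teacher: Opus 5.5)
Your proposal follows the paper's route: compute the low-order Taylor coefficients of $\Phi$ at $\bm{p}=\lambda\bm{a}_3(0)$ and feed them into Proposition~\ref{prop:ACriteria} with $s$ and $t$ exchanged. Your coefficients for (ii) are right and give $c_{20}c_{04}-3c_{12}^2=-12\,C_2(\lambda)$ under a consistent $1/(i!j!)$ normalization; the paper's displayed $\Phi_{ij}$ are actually plain monomial coefficients, so your bookkeeping is the safer one.

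For (iii) the work is lighter than you expect. One has $c_{13}=-2b_{13}\lambda$, since the frame-derivative contribution carries the factor $b_{03}=0$, and $c_{05}=-2b_{05}\lambda$. With $c_{21}=0$, the quintic invariant is then identically $-40\lambda\,c_{20}\bigl[\bigl(b_{13}(\tau_0-\theta_1)-\frac{b_{05}}{10}\kappa_0\sin\theta_0\bigr)\lambda+\frac{b_{05}}{10}\bigr]$, with no appeal to $C_2(\lambda)=0$. The $\kappa_0\sin\theta_0$ term comes from $c_{20}$, not from $\bm{a}_3'(0)$, and $\lambda\neq 0$ because $C_2(0)=-1$.
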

    
        \begin{rmk}
            The function $\Phi$ cannot be $\mathcal{R}$-equivalent to $s^2 + t^3$ if we assume $f$ is a cuspidal cross cap or worse, and $\Phi$ can be if a cuspidal edge. Because there is a diffeomorphism such that
                $$j^3(\Phi\circ\varphi)-j^2(\Phi\circ\varphi)=\frac{b_{03}}{3}(q\sin\theta_0+r\cos\theta_0)t^3.$$
            The function $\Phi$ degenerates only at $(p,q,r)=(0,0,0)$. Furthermore, the square of the distance $\varepsilon^2$ of the parallel surface $f^\varepsilon$ equals $p^2 + q^2 + r^2$. Hance, $\varepsilon = \frac{r}{\cos\theta_0}$ if $\Phi$ is $\mathcal{R}$-equivalent to $s^2 \pm t^4$ or worse. It is a solution of the quadratic equation $C_2(\varepsilon) = 0$.
        \end{rmk}
    
        \begin{prop} \label{prop:corank2Deg}
            Let $f$ be parameterized in Fukui's normal form and have a cuspidal cross cap at \Origin{} and $\Phi$ be the distance squared unfolding. Then, \vspace{-0.25em}
            \begin{itemize}
                 \item[\ronumi] The distance squared unfolding $\Phi$ is $\mathcal{R}$-equivalent to $D_4^\pm$ singularity, $s^3 \pm st^2$, if and only if $q=\frac{1}{\kappa_0}$, $-q\cos\theta_0 + r\sin\theta_0 = 0$, $\tau_0-\theta_1 \not=0$ and  \vspace{-0.25em}
                    $$\kappa_0\tau_0\cos\theta_0 + \kappa_1\sin\theta_0 \not=0. \vspace{-0.5em}$$
                 \item[\ronumii] The distance squared unfolding $\Phi$ is $\mathcal{R}$-equivalent to $E_6^\pm$ singularity, $s^3 \pm t^4$, if and only if $q=\frac{1}{\kappa_0}$, $-q\cos\theta_0 + r\sin\theta_0 = 0$, $\tau_0-\theta_1 = 0$,  \vspace{-0.25em}
                    $$\frac{b_{04}}{3} - \kappa_0 \sin\theta_0 \not= 0 \text{ and } \kappa_0\tau_0\cos\theta_0 + \kappa_1\sin\theta_0 \not=0. \vspace{-0.25em}$$
                \item[\ronumiii] The distance squared unfolding $\Phi$ is $\mathcal{R}$-equivalent to $E_7$ singularity, $s^3 + st^3$, if and only if $q=\frac{1}{\kappa_0}$, $-q\cos\theta_0 + r\sin\theta_0 = 0$, $\tau_0-\theta_1 = 0$,  \vspace{-0.25em}
                    $$\frac{b_{04}}{3} - \kappa_0 \sin\theta_0 = 0, b_{13}\not= 0 \text{ and } \kappa_0\tau_0\cos\theta_0 + \kappa_1\sin\theta_0 \not=0. \vspace{-0.25em}$$
                \item[\ronumiv] The distance squared unfolding $\Phi$ is $\mathcal{R}$-equivalent to $E_8$ singularity, $s^3 + t^5$, if and only if $q=\frac{1}{\kappa_0}$, $-q\cos\theta_0 + r\sin\theta_0 = 0$, $\tau_0-\theta_1 = 0$,  \vspace{-0.25em}
                    $$\frac{b_{04}}{3} - \kappa_0 \sin\theta_0 = 0, b_{13} = 0, b_{05} \not= 0 \text{ and } \kappa_0\tau_0\cos\theta_0 + \kappa_1\sin\theta_0 \not=0. \vspace{-0.25em}$$
             \end{itemize}
        \end{prop}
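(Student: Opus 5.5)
Throughout, $f$ is in Fukui's normal form with a cuspidal cross cap at the origin, so $b_{03}=0$ and $b_{13}\neq 0$. The plan is to expand $\Phi$ at $(s,t)=(0,0)$ with the point $\bm{p}$ fixed, determine the Hessian kernel, and reduce to the corank~2 criteria of Proposition~\ref{prop:DCriterion} and Proposition~\ref{prop:ECriteria}. Corank~2 requires both diagonal coefficients of $j^2\Phi$ to vanish (the linear term $-2ps$ must also vanish, so $p=0$); this gives exactly $q=\frac{1}{\kappa_0}$ and $-q\cos\theta_0+r\sin\theta_0=0$, hence $r=\frac{\cos\theta_0}{\kappa_0\sin\theta_0}$. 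This is the common hypothesis of all four items.

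Next I substitute $\gamma$ from \eqref{eq:gamma}, $\bm{a}_2,\bm{a}_3$ from \eqref{eq:a2}, \eqref{eq:a3}, and $a,b$ with $a_2=1$, $a_3=0$, $a_4=-\frac34 b_3^2$ (Lemma~\ref{lem:FukuiFormCoeff}) into $\Phi=\|f\|^2-2\langle f,\bm{p}\rangle+\|\bm{p}\|^2-\varepsilon^2$. I then collect the monomials $s^3$, $s^2t$, $st^2$, $t^3$ and, where needed, $t^4$, $st^3$, $t^5$, $s^2t^2$. The key facts I expect are:
\begin{itemize}
\item[(a)] $c_{30}$ is a nonzero multiple of $\kappa_0\tau_0\cos\theta_0+\kappa_1\sin\theta_0$. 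It comes from $\langle \gamma,\bm{p}\rangle$ at order $s^3$, i.e.\ $\kappa_1 q+\kappa_0\tau_0 r$, which after inserting $q,r$ is proportional to $\kappa_0\tau_0\cos\theta_0+\kappa_1\sin\theta_0$.
\item[(b)] $c_{21}=0$. This should follow since $f_s$ has no $t$-linear term and the $s^2t$ coefficients cancel.
\item[(c)] $c_{12}$ is proportional to $\tau_0-\theta_1$. The $st^2$ term arises from $\langle \bm{a}_2'(0),\bm{p}\rangle$, and $(\bm{a}_2')$ has $\bm{a}_3$-component $\tau_0-\theta_1$; since $\bm{a}_3(0)\parallel\bm{p}$-component direction is nonzero, this gives the factor.
\item[(d)] $c_{03}$ is proportional to $b_{03}=0$, as in the remark after Proposition~\ref{prop:AktDeg}.
\end{itemize}

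With these, the cubic is $\frac{c_{30}}{6}s^3+\frac{c_{12}}{2}st^2$. This cubic has three distinct roots iff $c_{30}c_{12}\neq0$, which is item (i) via the discriminant of Proposition~\ref{prop:DCriterion}; the sign of $c_{30}c_{12}$ gives $D_4^\pm$.

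If $\tau_0-\theta_1=0$, the cubic becomes $c_{30}s^3$ with $c_{30}\ne0$, and Proposition~\ref{prop:ECriteria} applies. Here I need $c_{04}$, $c_{13}$ (the $st^3$ coefficient, for $E_7$, which the criterion labels via the $st^3$ term) and $c_{05}$. I expect:
\begin{itemize}
\item $c_{04}\propto \frac{b_{04}}{3}-\kappa_0\sin\theta_0$. The $t^4$ terms come from $\|a\bm{a}_2+b\bm{a}_3\|^2$, giving $t^4/4$, and from $-2\langle b_4\frac{t^4}{24}\bm{a}_3,\bm{p}\rangle$ with $\langle \bm{a}_3(0),\bm{p}\rangle=\frac{1}{\kappa_0\sin\theta_0}$ after normalizing. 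This mirrors $C_2(\varepsilon)$ evaluated where the $\tau_0-\theta_1$ term vanishes.
\item $c_{13}\propto b_{13}$, from $b_3'(0)st^3/6$ paired with $\bm{a}_3\cdot\bm{p}$, with the other contributions vanishing because $\tau_0=\theta_1$.
\item $c_{05}\propto b_{05}$.
\end{itemize}
Items (ii)--(iv) then follow directly from Proposition~\ref{prop:ECriteria}(1)--(3).

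The remaining subtlety in the $E$ cases is that the criteria require $c_{21}=c_{12}=c_{03}=0$ exactly, while mixed quartic terms such as $s^2t^2$ may still be present. These are absorbed because they are weighted-higher under weights $(4,3)$ for $E_6$ (respectively $(3,2)$ for $E_7$, $(5,3)$ for $E_8$). So only the listed coefficients matter, consistent with Proposition~\ref{prop:ECriteria}.

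The main obstacle is the bookkeeping in (c) and in $c_{04}$, $c_{13}$. One must track the second-order corrections of $\bm{a}_2,\bm{a}_3$ from \eqref{eq:a2}, \eqref{eq:a3}, and check that every stray term either cancels or carries a factor $\tau_0-\theta_1$ (or $b_{03}$) and so vanishes in the relevant case. I would do this by writing $f$ in weighted form with weights $(s,t)=(2,1)$, as in the proof of Proposition~\ref{prop:DegCSk}, so that only finitely many terms of $\gamma$, $a\bm{a}_2$ and $b\bm{a}_3$ need to be computed.
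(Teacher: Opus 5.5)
Your proposal is correct and follows the same route as the paper. The paper also computes the low-order Taylor coefficients of $\Phi$, in its case through the general list of $\Phi_{ij}$. At the corank-2 point $p=0$, $q=1/\kappa_0$, $r=\cos\theta_0/(\kappa_0\sin\theta_0)$ it reads off the values you predict:
\begin{itemize}
\item[] $c_{21}=c_{03}=0$, $c_{30}\propto\kappa_0\tau_0\cos\theta_0+\kappa_1\sin\theta_0$, $c_{12}\propto\tau_0-\theta_1$,
\item[] $c_{04}\propto\frac{b_{04}}{3}-\kappa_0\sin\theta_0$, $c_{13}\propto b_{13}$, $c_{05}\propto b_{05}$.
\end{itemize}
It then substitutes these into Propositions~\ref{prop:DCriterion} and~\ref{prop:ECriteria}, as you do.

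Two small points. Since you rightly assume $b_{13}\neq0$ for a cuspidal cross cap, item (iv) is vacuous. Also, the $E_7$ criterion needs the $st^3$ coefficient $c_{13}$, which is what you used, not the $c_{14}$ printed in Proposition~\ref{prop:ECriteria}.
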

    
        \begin{rmk}
            If we assume $f$ is a cuspidal cross cap, then $\Phi$ cannot degenerate to $E_8$-singularity because of $b_{13}\not=0$, and if cuspidal $S_1$, $\Phi$ cannot degenerate to $E_7$-singularity. Because $b_{13}=0$, $b_{14}b_{05}>0$ holds. Furthermore, a square of the distance $\varepsilon^2$ of the parallel surface $f^\varepsilon$ equals $p^2 + q^2 + r^2$. Thus, if $\Phi$ has a corank 2 singularity, then $\varepsilon = \frac{1}{\kappa_0 \sin\theta_0}$ holds. We denote $\varepsilon \kappa_0\sin\theta_0+1$ by $C_1(\varepsilon)$.
        \end{rmk}
    
        We give proofs of these propositions in the section \ref{sec:Proofs}.
        
        \begin{rmk}
            In the case $f$ is a regular surface, the distance squared unfolding $\Phi$ can be $\mathcal{K}$-versal due to \cite[Theorem 3.4.]{FukuiHasegawaPS}. On the other hand, in a cuspidal cross cap case, $\Phi$ can't be a $\mathcal{K}$-versal unfolding, because the discriminant set of $\Phi$, which attains a parallel surface of, as a frontal singularity, a cuspidal cross cap is diffeomorphic to front singularities if $\Phi$ is a $\mathcal{K}$-versal unfolding of $A_k$, $D_k$ and $E_k$-singularities. 
        \end{rmk}
    
        In a general context, we collect the worst singularity of the distance squared unfolding $\Phi$ of the surface $f$ with a cuspidal cross cap. We can rewrite conditions for these singularities using invariants of cuspidal cross caps.
    
        \begin{lem}\label{lem:Curvatures}
            Several geometric invariants for cuspidal edges were already defined. Here is a list of these invariants.
            \begin{itemize}
                \item normal curvature $\kappa_\nu$ and singular curvature $\kappa_s$ in \cite{SUY},
                \item cuspidal curvature $\kappa_c$ in \cite{MSUY}, and
                \item cuspidal directional torsion $\kappa_t$ and edge inflectional curvature $\kappa_i$ in \cite{MS}.
            \end{itemize}
    
            We express constant terms of them in Fukui's form.
                $$\begin{array}{lll}
                     \kappa_\nu(0) = \kappa_0\cos\theta_0 & \kappa_s(0) = \kappa_0\sin\theta_0 & \kappa_i(0) = \kappa_0\tau_0\cos\theta_0 + \kappa_1\sin\theta_0 \\
                     \kappa_t(0) = \tau_0-\theta_0 & \kappa_c(0) = b_{03}
                \end{array}$$
        \end{lem}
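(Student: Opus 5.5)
The statement asks for the values at $s=t=0$ of the five invariants in the Fukui normal form. We already have them as functions of $s$ along $S(f)$ in Lemma \ref{lem:invariants}: $\kappa_\nu=\kappa\cos\theta$, $\kappa_s=\kappa\sin\theta$, $\kappa_c=b_3$, $\kappa_t=\tau-\theta'$, and $\kappa_i=\kappa\tau\cos\theta+\kappa'\sin\theta$. So the plan is simply to evaluate these expressions at $s=0$. We use the convention behind the Taylor expansions (\ref{eq:a2}), (\ref{eq:a3}), (\ref{eq:gamma}): a function $h(s)$ has expansion $h(s)=\sum h_i s^i/i!$, so $h_i=h^{(i)}(0)$, and $b_{ij}=b_j^{(i)}(0)$.

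Substituting $s=0$ gives:
\begin{itemize}
\item $\kappa_\nu(0)=\kappa_0\cos\theta_0$ and $\kappa_s(0)=\kappa_0\sin\theta_0$, directly.
\item $\kappa_c(0)=b_3(0)=b_{03}$.
\item $\kappa_i(0)=\kappa_0\tau_0\cos\theta_0+\kappa_1\sin\theta_0$, since $\kappa'(0)=\kappa_1$.
\item $\kappa_t(0)=\tau(0)-\theta'(0)=\tau_0-\theta_1$, since $\theta'(0)=\theta_1$.
\end{itemize}
The statement writes $\kappa_t(0)=\tau_0-\theta_0$, which I read as a misprint for $\tau_0-\theta_1$. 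The latter is the combination that appears in $C_2$ and throughout Propositions \ref{prop:AktDeg} and \ref{prop:corank2Deg}, so I would state the corrected index explicitly.

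The only point needing care is that the determinant formulas in Lemma \ref{lem:invariants} really define these invariants, i.e.\ that the chosen data are admissible in the definitions of \cite{SUY,MSUY,MS}. Those definitions ask for:
\begin{itemize}
\item an arc-length parametrization of the singular curve, which $\gamma$ provides;
\item a null vector field $\eta$, here $\partial_t$;
\item the normalization $\langle f_{tt},f_{tt}\rangle=1$ on $S(f)$, which holds because $f_{tt}|_{t=0}=\bm{a}_2$.
\end{itemize}
I would check this admissibility against Lemma \ref{lem:FukuiFormCoeff}: $a_2=1$ and $b_2=0$ give $f_{tt}|_{t=0}=\bm{a}_2$, and $a_3=0$ fixes the third-order terms used in $\kappa_c$ and $\kappa_t$.

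The possible obstacle is signs and orientation conventions, for instance whether $\nu|_{S(f)}=\bm{a}_3$ matches the orientation used in the cited definitions. Lemma \ref{lem:EFG} gives $\nu|_{t=0}=\bm{a}_3$, consistent with the formulas, so no sign correction should be needed.
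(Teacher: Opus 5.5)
Your route is the same as the paper's: substitute $s=0$ into the formulas of Lemma~\ref{lem:invariants}. Your correction $\kappa_t(0)=\tau_0-\theta_1$ is right. The problem is the check you single out as ``the only point needing care''. You assert it rather than carry it out, and it fails for the first two entries.

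In \cite{SUY} the two invariants are defined by
\[
\kappa_\nu=\frac{\langle\hat\gamma'',\nu\rangle}{|\hat\gamma'|^2},\qquad
\kappa_s=\mathrm{sgn}(d\lambda(\eta))\,\frac{\det(\hat\gamma',\hat\gamma'',\nu)}{|\hat\gamma'|^3}.
\]
In Fukui's form we have:
\begin{itemize}
\item $|\gamma'|=1$;
\item $\gamma''=\kappa\bm n=\kappa\cos\theta\,\bm a_2+\kappa\sin\theta\,\bm a_3$;
\item $\nu|_{t=0}=\bm a_3$, by Lemma~\ref{lem:EFG};
\item $\lambda=\det(f_s,f_t,\nu)=t(1+O(t^2))$, so $d\lambda(\eta)=\lambda_t|_{t=0}=1>0$.
\end{itemize}
Hence $\kappa_\nu=\kappa\sin\theta$ and $\kappa_s=\kappa\cos\theta$, so $\kappa_\nu(0)=\kappa_0\sin\theta_0$ and $\kappa_s(0)=\kappa_0\cos\theta_0$. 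This is the reverse of what the statement says.

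The source of the swap is the proof of Lemma~\ref{lem:invariants}. It evaluates the singular-curvature determinant $\det(f_s,f_{ss},\bm a_3)$ but calls the result $\kappa_{\bm\nu}$. It evaluates the normal-curvature pairing $\langle f_{ss},\nu\rangle$ but calls the result $\kappa_s$. Plugging into that lemma just propagates the mislabelling. Your orientation worry is fine, since the sign factor is $+1$. But the claim that no correction is needed is wrong: the error is in the labels, not in a sign.

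The paper's own later usage supports the corrected assignment:
\begin{itemize}
\item Equation~\eqref{eq:At4con} in Corollary~\ref{prop:DegConSumm} puts $\kappa_{\bm\nu}$ exactly where $C_2$ has $\kappa_0\sin\theta_0$.
\item Item (iv) of that corollary ties the corank-two distance $1/(\kappa_0\sin\theta_0)$ to $\kappa_{\bm\nu}(0)$.
\item The leading coefficient $b_3\kappa\sin\theta/2$ of $tK$ in Corollary~\ref{cor:GH} then reads $\kappa_c\kappa_\nu/2$.
\end{itemize}

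The other three entries do pass the check. On $t=0$ we have $|\gamma'\times f_{tt}|=1$ and $\langle\gamma',f_{tt}\rangle=0$. So the definitions in \cite{MSUY,MS} reduce to
\[
\det(\bm a_1,\bm a_2,f_{ttt})=b_3,\qquad
\det(\bm a_1,\bm a_2,\bm a_2')=\tau-\theta',\qquad
\det(\bm a_1,\bm a_2,\gamma''')=\kappa\tau\cos\theta+\kappa'\sin\theta.
\]

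What you should actually prove is the statement with $\kappa_\nu$ and $\kappa_s$ interchanged and $\theta_1$ in $\kappa_t(0)$. Derive the first two values directly from the \cite{SUY} definitions, not from Lemma~\ref{lem:invariants}.
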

    
        \begin{proof}
            Substituting the origin by invariants of Lemma\ref{lem:invariants}.
        \end{proof}
            
        \begin{rmk}
            A map $f$ has cuspidal cross cap at the origin if and only if cuspidal curvature $\kappa_c(0)=0$ and $\kappa_c'(0)\not=0$ due to Fact \ref{fact:INV}, and cuspidal $S_1$ \textbf{only if} $\kappa(0)=\kappa'(0)=0$ and $\kappa''(0)\not=0$ due to Proposition \ref{prop:DegCSk}.
        \end{rmk}
        
        \begin{cor} \label{prop:DegConSumm}
            Let $f$ be parameterized in Fukui's normal form and have a cuspidal cross cap at \Origin{}, $f^\varepsilon$ be a parallel surface of $f$ at distance $\varepsilon$, and $\Phi(s,t, \bm{p}, \varepsilon)$ be the distance squared unfolding. Then, \vspace{-0.25em}
            \begin{itemize}
                \item[\ronumi] The singular set of $\ps$ is one of constant principal curvature lines, and $\Phi$ is equivalent to $A_{2\leq}$ singularity if and only if vectors $\partial_s$ and $\partial_t$ transverse to the constant principal curvature line in the source.
                \item[\ronumii] If $\Phi$ is $\mathcal{R}$-equivalent to $s^4+t^2$, $\partial_s$ is tangent to the constant principal curvature line in the source and \vspace{-0.25em}
                    $$\kappa_0^2\tau_0^3 + \kappa_0(\kappa_1\tau_1-\kappa_2\tau_0) + 2\kappa_1^2\tau_0 \not = 0.$$
                \item[\ronumiii] If $\Phi$ is $\mathcal{R}$-equivalent to $s^2+t^5$, $\partial_t$ is tangent to the constant principal curvature line in the source and \vspace{-0.25em}
                    $$\qty(b_3'(0) -\frac{b_5(0)^2}{100}) \kappa_t(0) + b_3'(0)\frac{b_5(0)}{10} \qty(\frac{b_4(0)}{3} - \kappa_{\bm{\nu}}(0)) \not= 0 \vspace{-0.5em}.$$
                Moreover, the distance $\varepsilon$ is a solution of \vspace{-0.5em}
                    \begin{equation} \label{eq:At4con}
                        \qty(\frac{b_4(0)}{3}\varepsilon - 1) \qty(\kappa_{\bm{\nu}} \varepsilon -1) - \kappa_t(0)^2 \varepsilon^2 = 0.\vspace{-0.75em}
                    \end{equation}
                \item[\ronumiv] If $\Phi$ is $\mathcal{R}$-equivalent to $s^3+st^2$, $\partial_s$ is tangent to the constant principal curvature line in the source, $\kappa_i(0)\not=0$, and $\kappa_t(0)\not=0$. Moreover, the distance $\varepsilon$ equals a constant term of the limiting normal curvature $\kappa_{\bm{\nu}}(0)$. \vspace{-0.5em}
            \end{itemize}
        \end{cor}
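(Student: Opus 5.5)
The statement is a translation of Propositions \ref{prop:AksDeg}, \ref{prop:AktDeg} and \ref{prop:corank2Deg} into the invariants of Lemma \ref{lem:Curvatures}, together with a geometric reading of the kernel direction of the Hessian. So I would not redo any normal form computations. I would proceed item by item, substituting $\kappa_\nu(0)=\kappa_0\cos\theta_0$, $\kappa_s(0)=\kappa_0\sin\theta_0$, $\kappa_t(0)=\tau_0-\theta_1$, $\kappa_i(0)=\kappa_0\tau_0\cos\theta_0+\kappa_1\sin\theta_0$, and $b_3'(0)=b_{13}$, $b_4(0)=b_{04}$, $b_5(0)=b_{05}$ into the conditions already proved.

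\textbf{The geometric item \ronumi.} Since $S(f)=\{t=0\}$ and the singular set of $f^\varepsilon$ is where $\det(I-\varepsilon W)$ vanishes, Lemma \ref{lem:Weingarten} shows the following. After clearing the factor $1/t$, the singular set of $\ps$ is the locus where $\varepsilon^{-1}$ is a principal curvature of $f$, i.e.\ a level set of a principal curvature function; this is the ``constant principal curvature line''. $\Phi$ is $A_1$ exactly when the Hessian $j^2\Phi$ is nondegenerate. It degenerates ($A_{\ge2}$) exactly when $\bm p=f-\varepsilon\bm\nu$ is a focal point, and the kernel direction is then the principal direction. From the $2$-jet, the kernel is $\partial_s$ when $q=1/\kappa_0$ and $\partial_t$ when $-q\cos\theta_0+r\sin\theta_0=0$. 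I would argue this via the standard identity $\mathrm{Hess}\,\Phi\propto I-\varepsilon W$ in the frame $(f_s,f_t)$, restricted appropriately at $t=0$. Tangency or transversality of $\partial_s,\partial_t$ to the level curve is then read off by differentiating the determinant.

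\textbf{Items \ronumii--\ronumiv.} Item \ronumii\ is immediate from Proposition \ref{prop:AksDeg}\,\ronumiii. For \ronumiv, Proposition \ref{prop:corank2Deg}\,\ronumi\ gives $\tau_0-\theta_1\neq0$, i.e.\ $\kappa_t(0)\ne0$, and $\kappa_i(0)\ne0$. The distance comes from $\varepsilon^2=p^2+q^2+r^2$ with $p=0$, $q=1/\kappa_0$ and $r=q\cot\theta_0$. For \ronumiii\ I would set $\varepsilon=r/\cos\theta_0$, as in the remark after Proposition \ref{prop:AktDeg}, which turns $C_2=0$ into \eqref{eq:At4con} once the sign conventions are matched. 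I would then eliminate $\varepsilon$ between $C_2(\varepsilon)=0$ and the $A_4$ condition $\{b_{13}\kappa_t-\tfrac{b_{05}}{10}\kappa_s\}\varepsilon+\tfrac{b_{05}}{10}\ne0$. Concretely, the linear condition gives $\varepsilon=-\tfrac{b_{05}/10}{b_{13}\kappa_t-\tfrac{b_{05}}{10}\kappa_s}$, which I substitute into the quadratic and simplify to the displayed polynomial nonvanishing.

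\textbf{Main obstacle.} The hardest step will be this elimination in \ronumiii. The displayed combination mixes $\kappa_\nu$ in \eqref{eq:At4con} with $\kappa_s$ in $C_2$, so I would first carefully check which of $\sin\theta_0,\cos\theta_0$ actually appears after the substitution $\varepsilon=r/\cos\theta_0$, and correct the convention if needed. Only then would I expand the resultant and match it to the stated expression.
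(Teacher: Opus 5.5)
Your overall route matches the paper's. Its proof consists only of the factorization $\lambda^\varepsilon=t\det(I+\varepsilon W)\det(f_s,f_t/t,\bm{\nu})$, so that $S(f^\varepsilon)$ is a level curve of a principal curvature once the $1/t$ pole is cleared. Items (ii)--(iv) are then read off from Propositions \ref{prop:AksDeg}--\ref{prop:corank2Deg}.

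\textbf{The gap: the elimination in (iii).} Put $x=r/\cos\theta_0$, $\beta=b_{05}/10$, $m=b_{13}\kappa_t(0)-\beta\kappa_s(0)$, and let $Q(x)$ be the quadratic of Proposition \ref{prop:AktDeg}. Substituting the root $x^*=-\beta/m$ of the linear form gives
\[
m^2Q(x^*)=-\kappa_t(0)\Bigl[\bigl(b_{13}^2-\beta^2\bigr)\kappa_t(0)+b_{13}\beta\bigl(\tfrac{b_{04}}{3}-\kappa_s(0)\bigr)\Bigr],
\]
which is $-\kappa_t(0)$ times the left side of \eqref{eq:bifA5}. Its vanishing means the linear form vanishes at \emph{some} root of $Q$, not necessarily at the root where $\Phi$ is $A_4^t$. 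Since $Q$ has two roots, $\Phi$ can be $A_4^t$ at one and $A_5$ at the other. This is exactly the $\varepsilon_0$/$\varepsilon_0^*$ situation of the Bifurcation subsection.

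A concrete instance: take $\kappa_t(0)=\kappa_s(0)=b_{13}=1$, $b_{05}=20$, $b_{04}=15/2$. Then $Q(x)=0$ for $x\in\{1/3,2\}$, and neither root equals $1/\kappa_s(0)$. The linear form is $2-x$, so $\Phi$ is $A_4^t$ at $x=1/3$ while the resultant vanishes. Hence the hypothesis $\Phi\sim s^2+t^5$ cannot yield the displayed nonvanishing by any elimination. What the propositions actually give is $C_2(\varepsilon)=0$ together with the pointwise condition of Proposition \ref{prop:AktDeg}(iii). Nonvanishing of the resultant is instead equivalent to \emph{both} roots being $A_4^t$, i.e.\ no $A_5$ in the family.

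\textbf{Formulas in (iii) and (iv).} You rightly flagged the $\kappa_{\bm\nu}$/$\kappa_s$ mismatch, but no sign convention resolves it: $\varepsilon=\pm r/\cos\theta_0$ leaves $\kappa_0\sin\theta_0$ untouched. So \eqref{eq:At4con} and the nonvanishing condition hold with $\kappa_s(0)$ in place of $\kappa_{\bm\nu}(0)$. Also, the first coefficient must be $b_3'(0)^2-b_5(0)^2/100$, as in \eqref{eq:bifA5}, not $b_3'(0)-b_5(0)^2/100$. Likewise in (iv), your own computation gives $\varepsilon=1/(\kappa_0\sin\theta_0)=1/\kappa_s(0)$, in agreement with the remark after Proposition \ref{prop:corank2Deg}, not $\kappa_{\bm\nu}(0)$. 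You should state this rather than the printed claim.

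\textbf{The Hessian argument in (i).} The identity $\mathrm{Hess}\,\Phi\propto I-\varepsilon W$ breaks down at $t=0$, where $G=N=0$ and $W$ has a pole. For $\bm p=-\varepsilon\bm{\nu}(0)$, i.e.\ $(q,r)=-\varepsilon(\sin\theta_0,\cos\theta_0)$, one has $-q\cos\theta_0+r\sin\theta_0=0$ for every $\varepsilon$, and $\mathrm{Hess}\,\Phi(0)\propto\mathrm{diag}(C_1(\varepsilon),0)$. So the $\partial_t$-degeneracy is automatic rather than a focal-point condition. Argue (i) from the factorization of $\lambda^\varepsilon$ and the explicit $2$-jet instead, as the paper does.
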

    
        \begin{proof}
            $$\lambda^\varepsilon=\det|\ps_s, \ps_t, \Nor| = t\times \det|E-\varepsilon W|\times \det|f_s,f_t/t,\nu|=0$$
            $$\Leftrightarrow\ \det|\ps_s, \ps_t, \Nor| = t\times \det|E-\varepsilon W|=0$$
        \end{proof}
    
        \begin{rmk} \label{rmk:GaussMean}
            The quadratic and the linear coefficients of the Equation \eqref{eq:C2} are $\frac{b_{04}}{3}\kappa_0 \sin\theta_0 -(\tau_0-\theta_1)^2$ and $\kappa_0\theta_0 + \frac{b_{04}}{3}$, respectively. These coefficients correspond to the constant term of mean curvature and Gaussian curvature, respectively, skipping leading terms from Corollary \ref{cor:GH}.
                $$\begin{array}{rcl}
                    H&=& \frac{1}{t} \qty[ \frac{b_{4}}{4} + \frac{1}{2}\qty(\frac{b_4}{3} + \kappa\sin\theta)t + O(t^2) ],\\
                    K&=& \frac{1}{t} \qty[ \frac{b_3\kappa\sin\theta}{2} + \qty{ \frac{b_4\kappa\sin\theta}{3} - \qty(\tau-\theta')^2 - \frac{b_3^2\kappa\cos\theta}{4}} t + O(t^2) ].  
                \end{array}$$
            Therefore, solutions of the Equation \eqref{eq:C2} imply inducing principal curvatures.
    
            According to \cite{FukuiSW}, the principal curvatures $\kappa_1,\kappa_2$ admit the asymptotic expansion,
                $$\kappa_1 = a_{-1}\frac{1}{t} + a_0 + a_1t + O^2(t),\ \ \ \kappa_2 =  b_0 + b_1t + O^2(t). $$
            Then the quadric equation
                $$0=C_2(\varepsilon) = \varepsilon^2 -\frac{1}{2}\left.\qty{(\kappa_1+\kappa_2)t}'\right|_{t=0}\varepsilon + \left.(\kappa_1\kappa_2t)'\right|_{t=0} = \varepsilon^2 - (a_0+b_0)\varepsilon+(a_0b_0+a_{-1}b_1)$$
            holds. Hence, the radii of curvature of cuspidal cross caps are determined by the terms up to the second order in the asymptotic expansions of the principal curvatures $\kappa_1,\kappa_2$.
                
        \end{rmk}
    
        \begin{dfn}
            We call a parallel surface $f^\varepsilon$ has $\bm{A_k^t}$, $\bm{D_k}$ or $\bm{E_k}$\textbf{-singularity} if the distance squared unfolding $\Phi$ of the surface $f$ has $A_k^t$, $D_k$ or $E_k$-singularity, respectively. Here, an $A_k^t$-singularity means that $\Phi$ is $\mathcal{R}$-equivalent to $s^2+t^{k+1}$.
        \end{dfn}

    \subsection{Bifurcation}
        In generic cuspidal cross caps, there are three distances $\varepsilon$ for which the parallel surface $f^\varepsilon$ degenerates. The two of distances are that a parallel surface $f^\varepsilon$ degenerates to an $A_4^t$-singularity. Another distance is that a parallel surface $f^\varepsilon$ degenerates to a $D_4$-singularity. We will examine the order in which these singularities appear.
    
        Since $C_2(0)=-1<0$ and $C_2\qty(\frac{1}{\kappa_0\sin\theta_0}) = \qty(\frac{\tau_0-\theta_1}{\kappa_0\sin\theta_0})^2 > 0$, the one of distances of $A_4^t$-singularity lies between the origin and the distance of $D_4$-singularity. Thus, the order can be classified into three patterns (see Figure \ref{fig:ConfigAD}.). Let us assign names to these patterns.
    
        \begin{figure}[h!] 
            \centering
            \includegraphics[width=0.32\linewidth]{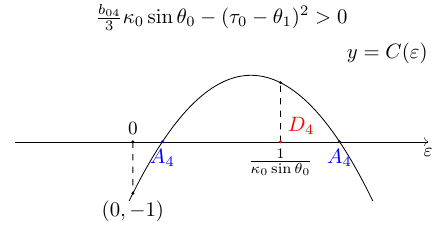}
            \includegraphics[width=0.32\linewidth]{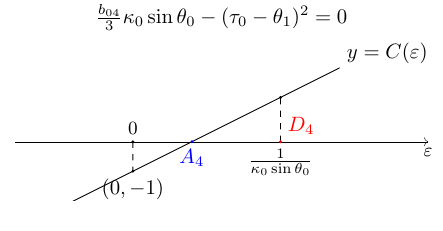}
            \includegraphics[width=0.32\linewidth]{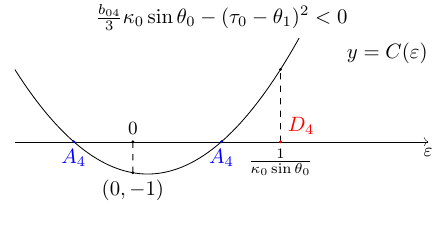}
            \caption{Configuration of $A_4^t$ and $D_4$-singularities.}
            \label{fig:ConfigAD}
        \end{figure}
    
        \begin{dfn}
            We call a cuspidal cross cap \textbf{elliptic}, \textbf{parabolic}, or \textbf{hyperbolic} if $\frac{b_{04}}{3}\kappa_0\sin\theta_0 - (\tau_0-\theta_1)^2$ is positive, zero, or negative, respectively.
        \end{dfn}
    
        Consider a case where this $A_4^t$-singularity further degenerates to an $A_5$-singularity. According to Proposition \ref{prop:AktDeg}, degeneration requires a same solution to satisfy both $C_2(\varepsilon) = 0$ and 
            \begin{equation}\label{eq:DegSolution}
                \qty{b_{13}(\tau_0-\theta_1) - \frac{b_{05}}{10}\kappa_0\sin\theta_0}\varepsilon + \frac{b_{05}}{10} = 0. \vspace{-0.25em}
            \end{equation}
        Then, we calculate these resultant and obtain the following.
            \begin{equation} \label{eq:bifA5}
                \qty(b_{13}^2 - \frac{b_{05}^2}{100}) (\tau_0-\theta_1) + b_{13} \frac{b_{05}}{10}\qty(\frac{b_{04}}{3} - \kappa_0\sin\theta_0) = 0.
            \end{equation}
        In addition, factoring $C_2$ yields the following under Equation \eqref{eq:bifA5}:
            $$C_2(\varepsilon) = \frac{1}{\frac{b_{05}}{10}b_{13}}\overbrace{\qty[\qty{b_{13}(\tau_0-\theta_1) - \frac{b_{05}}{10}\kappa_0\sin\theta_0}\varepsilon + \frac{b_{05}}{10}]}^{\text{Equation \eqref{eq:DegSolution}}} \qty[\qty{\kappa_0\sin\theta_0 b_{13} + (\tau_0-\theta_1)\frac{b_{05}}{10}} \varepsilon -b_{13}].$$
        We denote the degenerated solution by $\varepsilon_0^*$, and the non-degenerated solution by $\varepsilon_0$. A sign of difference $\varepsilon_0^*-\varepsilon_0$ indicates which singularities occur first. The sign is computed as
            $$\begin{array}{ccl}
                 \varepsilon_0^* - \varepsilon_0 & = & (\tau_0-\theta_1)(b_{13}^2 + \frac{b_{05}^2}{100})\frac{1}{b_{13}(\tau_0-\theta_1)-\frac{b_{05}}{10} \kappa_0\sin\theta_0} \frac{1}{\frac{b_{05}}{10}(\tau_0-\theta_1) + b_{13}\kappa_0\sin\theta_0}  \\
                 & = & -\frac{\tau_0-\theta_1}{b_{13} \frac{b_{05}}{10}} {\varepsilon_0\varepsilon_0^*} \qty(b_{13}^2 + \frac{b_{05}^2}{100}) \\
                 & = & \frac{\tau_0-\theta_1}{b_{13} \frac{b_{05}}{10}} \qty(\kappa_0\sin\theta_0 \frac{b_{04}}{3} - (\tau_0-\theta_1)^2) \qty(b_{13}^2 + \frac{b_{05}^2}{100}).
            \end{array}$$
        Thus, the sign is determined by the constant terms of the cuspidal directional torsion $\tau_0-\theta_1$,  our Gaussian curvature $\kappa_0\sin\theta_0\frac{b_{04}}{3} - \frac{b_{05}^2}{100}$, and the invariant of generalized cuspidal edge $b_{13}$, $b_{05}$. 
    
        Fixing $\tau_0-\theta_1$ as positive, we plot a graph of Equation (\ref{eq:bifA5}) in the $b_{13}$, $b_{05}$, $b_{14}$ space, and obtain the left side of Figure \ref{fig:BifA}.
        \begin{figure}[h!]
            \centering
            \includegraphics[width=0.32\linewidth]{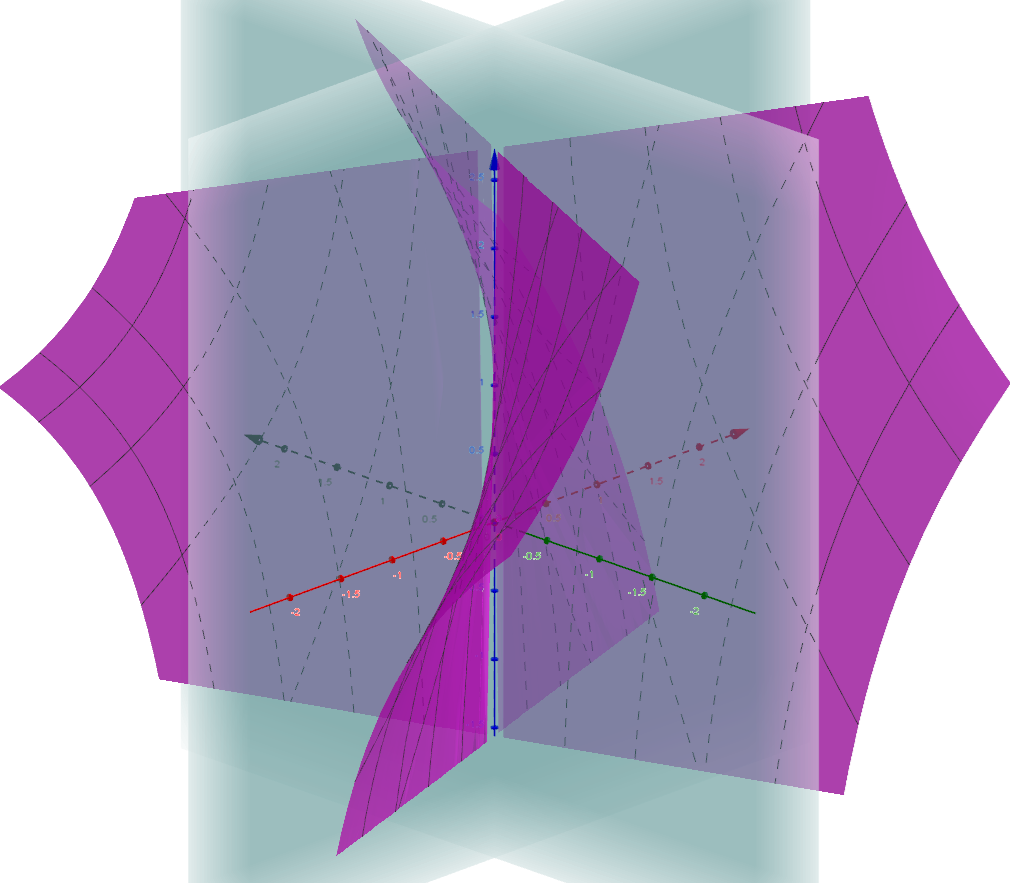}
            \includegraphics[width=0.32\linewidth]{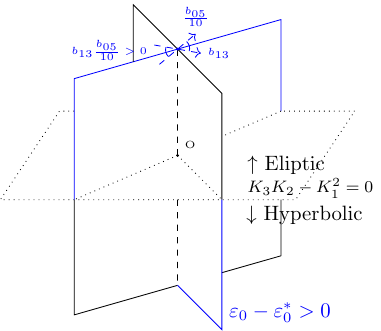}
            \includegraphics[width=0.32\linewidth]{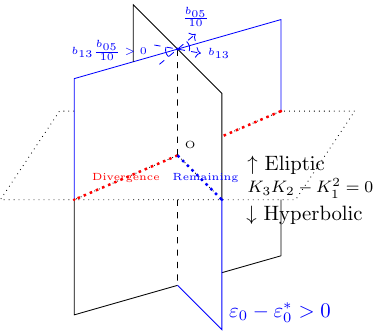}
            \caption{Left: plot of Equation (\ref{eq:bifA5}); blue planes correspond to $b_{05}=0$ or $b_{13}=0$. Center: two planes illustrate the surface defined by Equation (\ref{eq:bifA5}). Right: On the red line, an $A_5$-singularity does not appear for a parallel surface with a cuspidal cross cap; on the blue line, an $A_5$-singularity appears. Here, $K_1=\tau_0-\theta_1$, $K_2=\kappa_0\sin\theta_1$ and $K_3=\frac{b_{04}}{3}$}
            \label{fig:BifA}
        \end{figure}
        \begin{table}[h!]
            \centering
            \begin{tabular}{c@{\hskip10pt}c@{\hskip1pt}c@{\hskip1pt}c}
                 &  \textbf{Elliptic} & \textbf{Parabolic} & \textbf{Hyperbolic}  \vspace{1.5em}\\
                 
                 \hspace{-0.5em}\raisebox{5.75ex}[0ex][0ex]{$\bm{b_{13}\frac{b_{05}}{10}>0}$} & \includegraphics[scale=0.65]{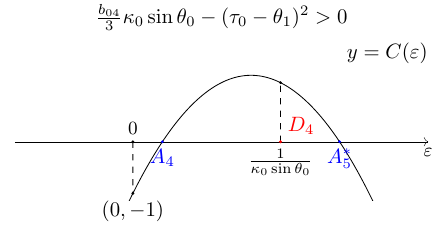} & \includegraphics[scale=0.65]{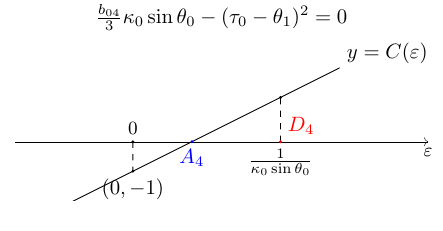} & \includegraphics[scale=0.65]{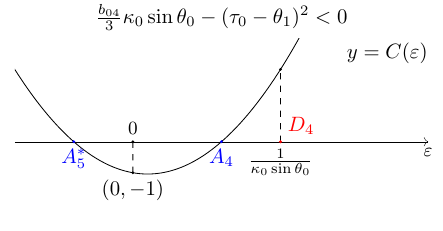}  \vspace{1em}\\
                 
                 \hspace{-0.5em}\raisebox{5.75ex}[0ex][0ex]{$\bm{b_{13}\frac{b_{05}}{10}<0}$} & \includegraphics[scale=0.65]{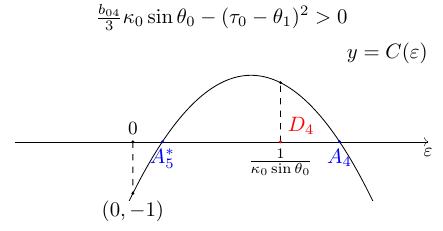} & \includegraphics[scale=0.65]{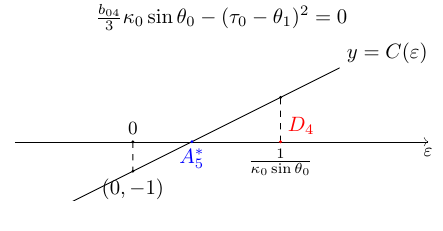} & \includegraphics[scale=0.65]{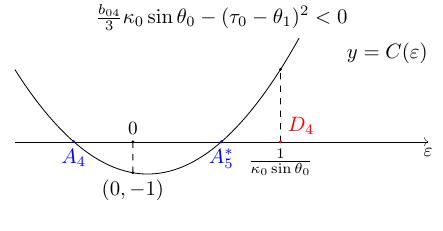}
            \end{tabular}
            \caption{Configration of $A_4^t$, $A_5$ and $D_4$ singularities.}
            \label{tab:configA5}
        \end{table}
        
        The surface defined by Equation (\ref{eq:bifA5}) is divided by two planes $b_{13}=0$ and $b_{05}=0$. For example, there are two cases where an $A_4$-singularity degenerates to an $A_5$-singularity on the left or right side if a cuspidal cross cap is an elliptic. Table \ref{tab:configA5} summarizes the configuration for $A_4^t$ and $A_5$-singularity of cuspidal cross caps. In a parabolic case, an $A_5$-singularity of cuspidal cross caps divergence if $b_{13}\frac{b_{05}}{10}>0$. Therefore, the right side of Figure \ref{fig:BifA}, representing the bifurcation set of singularities worse than $A_4^t$, includes a narrow gap of measure zero in the area $b_{13}\frac{b_{05}}{10}>0$ (See the right side of Figure \ref{fig:BifA}).

        Next, we consider the case where the $E_6$ singularity appears. A cuspidal cross cap is an $E_6$-singularity if and only if $\varepsilon = \frac{1}{\kappa_0\sin\theta_0}$, $\tau_0-\theta_1=0$, $\frac{b_{04}}{3} - \kappa_0\sin\theta_0 \not=0$ and $\kappa_i(0) \not= 0$ according to Proposition \ref{prop:corank2Deg} and Theorem \ref{prop:DegConSumm}. Under these conditions, factoring $C_2(\varepsilon)$ yields the following:
            $$C_2(\varepsilon) = - \qty(\varepsilon\frac{b_{04}}{3} - 1)\qty(\varepsilon\kappa_0\sin\theta_0 - 1).$$
        Thus, one of an $A_4^t$-singularities is located at a position as a $D_5$-singularity, and roughly speaking, they seem to combine to form an $E_6$-singularity. Hence, we write $A_4^t+D_5=E_6$. Properly speaking, combining these singularities means interpreting a singularity as an intersection between the closures of the orbits of $\Phi$'s singularities.
        
        In a elliptic case, which $A_4$-singularities combines with the $D_4$-singularity is determined by the comparison of $\frac{b_{04}}{3}$ and $\kappa_0\sin\theta_0$ (see Table \ref{tab:configE}). If $\frac{b_{04}}{3}$ is equal to $\kappa_0\sin\theta_0$, then two $A_4$-singularities combine the $D_4$-singularity, and this occurs only if $\tau_0 - \theta_1 = 0$ and $\frac{b_{04}}{3} = \kappa_0 \sin\theta_0$ since $C_2(\varepsilon)=0$ has a double root. Thus, a parallel surface of a cuspidal cross cap is an $E_7$-singularity if and only if $C_2(\varepsilon)=0$ has a double root since proposition \ref{prop:corank2Deg}.
    
    
    
                 
                 
    
    
        \begin{table}[h!] \label{tab:configE}
            \centering
            \begin{tabular}{@{\hskip0pt}ccc}
                 \textbf{Elliptic} & \textbf{Parabolic}  \\
                 
                 \includegraphics[scale=0.75]{pictures/ConfigADellip.pdf} & \includegraphics[scale=0.75]{pictures/ConfigADparab.pdf}  \\
    
                 {\footnotesize $(\kappa_0\sin\theta_0 < \frac{b_{04}}{3})$} {\Large $\downarrow$} {\footnotesize $(\kappa_0\sin\theta_0 > \frac{b_{04}}{3})$} & {\Large $\downarrow$} \vspace{0.5em} \\
                 
                 \includegraphics[scale=0.6]{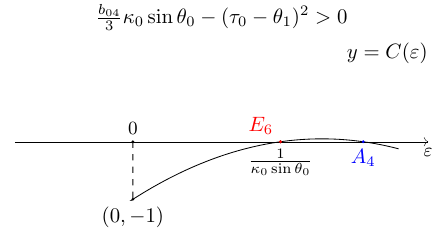} \includegraphics[scale=0.6]{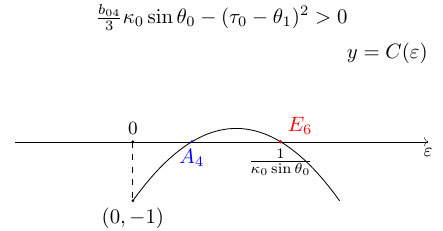} & \includegraphics[scale=0.75]{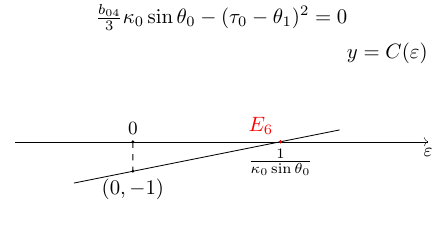} \\
                 
                 {\Large $\downarrow$} & \textbf{Hyperbolic} \vspace{0.5em}\\
    
                 \includegraphics[scale=0.75]{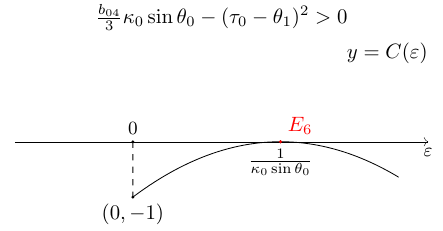} & \includegraphics[scale=0.75]{pictures/ConfigADhyper.pdf} \\
    
                 & {\Large $\downarrow$} \vspace{1em} \\
    
                 & \includegraphics[scale=0.75]{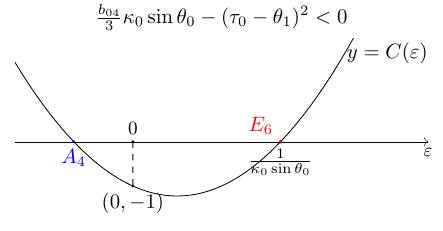}
            \end{tabular}
            \caption{Configration of $A_4^t$, $A_5$, $D_4$ and $E_6$ singularities.}
            \label{tab:my_label}
        \end{table}
    
        Finally, we discuss the adjacency of singularities.
    
        \begin{thm} \label{prop:AddSings}
            We denote combinations with singularities X and Y by $X+Y$. Then we summarize adjacency of singularities for a parallel surface of a cuspidal cross cap as follows:
                $$A_4 + D_4 = E_6, \ \ A_5 + D_4 = E_6 \text{ or } E_7, \ \ A_4 + E_6 = E_7 \ \text{ and }\  A_{\geq4} + D_{\geq5} = X_{\geq 9}.$$
            Here, $X_9$-singularity represents $\mathcal{R}$-equivalent to $x^4 + y^4 + ax^2y^2$, which is not a simple singularity. That is, adjacency between $A_4$ or worse and $D_5$ or worse yields no simple singularities. Additionally, $A_5 + D_4 = E_6$ holds if and only if $b_{05} = 0$ and $\frac{b_{04}}{3} - \kappa_0\sin\theta_0 \not=0$, whereas $A_5 + D_4 = E_7$ holds if and only if $b_{05} \not= 0$. $A_5 + E_6$ cannot happen.
        \end{thm}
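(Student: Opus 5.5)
The plan is to read each ``adjacency'' $X+Y$ as the simultaneous imposition of the defining conditions of $X$ and of $Y$ from Propositions \ref{prop:AktDeg} and \ref{prop:corank2Deg}, evaluated at one common point $(\bm{p},\varepsilon)$. We then decide which singularity of $\Phi$ results, using the criteria of Propositions \ref{prop:ACriteria}, \ref{prop:DCriterion} and \ref{prop:ECriteria}. In every case the $D$-type condition forces corank 2, i.e. $q=\frac{1}{\kappa_0}$ and $-q\cos\theta_0+r\sin\theta_0=0$, so $\varepsilon=\frac{1}{\kappa_0\sin\theta_0}$. The $A^t_{\geq4}$ condition forces $C_2(\varepsilon)=0$ at the same $\varepsilon$. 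Substituting, we have $C_2\bigl(\frac{1}{\kappa_0\sin\theta_0}\bigr)=\bigl(\frac{\tau_0-\theta_1}{\kappa_0\sin\theta_0}\bigr)^2$, computed in the Bifurcation subsection. Hence any such combination requires $\tau_0-\theta_1=0$, and by Proposition \ref{prop:corank2Deg} the cubic part of $\Phi$ becomes a pure cube $s^3$ (up to the factor $\kappa_i(0)\neq0$). This is the single mechanism behind all the statements.

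First I will treat $A_4+D_4$. With $\tau_0-\theta_1=0$, $C_2$ factors as $-(\varepsilon\frac{b_{04}}{3}-1)(\varepsilon\kappa_0\sin\theta_0-1)$. Generically $\frac{b_{04}}{3}\neq\kappa_0\sin\theta_0$, and Proposition \ref{prop:corank2Deg}(ii) then gives $E_6$. If $\frac{b_{04}}{3}=\kappa_0\sin\theta_0$, the root is double, which is the $E_6$ stratum meeting another $A_4$; this yields $E_7$ by Proposition \ref{prop:corank2Deg}(iii), using $b_{13}\neq0$ for a cuspidal cross cap. That settles $A_4+E_6=E_7$.

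Second, for $A_5+D_4$ I will additionally impose the $A_5$ condition \eqref{eq:DegSolution} at $\varepsilon=\frac{1}{\kappa_0\sin\theta_0}$ with $\tau_0-\theta_1=0$. The linear form then becomes $-\frac{b_{05}}{10}+\frac{b_{05}}{10}=0$ automatically. So I must look at the next-order condition from Proposition \ref{prop:ACriteria}(4) and compare it with the $E_7/E_8$ dichotomy in Proposition \ref{prop:ECriteria}, which reads off $c_{04}\propto\frac{b_{04}}{3}-\kappa_0\sin\theta_0$ and then $c_{13}\propto b_{13}$ and $c_{05}\propto b_{05}$. The case split is as follows. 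If $b_{05}\neq0$, the $A_5$-locus coincides with the double root, giving $E_7$. If $b_{05}=0$ and $\frac{b_{04}}{3}\neq\kappa_0\sin\theta_0$, we get $E_6$. To exclude $A_5+E_6$, I will show that $E_6$ requires $\frac{b_{04}}{3}\neq\kappa_0\sin\theta_0$, which makes the $A_4^t$ root simple and inconsistent with the extra $A_5$ equation unless the coefficient degeneracy forces $E_7$. The main obstacle is exactly this step: it needs the sextic coefficient of Proposition \ref{prop:ACriteria}(4), written in Fukui's invariants at the corank-2 point. Rather than computing it directly, I would first try to take the limit of the $A_5$ locus \eqref{eq:bifA5} as $\tau_0-\theta_1\to0$.

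Finally, for $A_{\geq4}+D_{\geq5}$: $D_{\geq5}$ means the cubic form has a repeated but not triple root, whereas we have shown that coexistence with $A^t_{\geq4}$ forces $\tau_0-\theta_1=0$. Combined with the further vanishing required by $D_{\geq5}$ (through $\kappa_i(0)=0$), the whole 3-jet vanishes. The resulting germ therefore has a zero 3-jet and corank 2, so it is of type $X_9$ or worse (quartic leading form $x^4+y^4+ax^2y^2$) and not simple, by the standard classification.
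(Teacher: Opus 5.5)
Your reduction matches the paper's: the corank-2 distance $\varepsilon=1/(\kappa_0\sin\theta_0)$ together with $C_2(\varepsilon)=0$ forces $\tau_0-\theta_1=0$. Your treatment of $A_4+D_4=E_6$, $A_4+E_6=E_7$ and $A_{\geq4}+D_{\geq5}$ is also the paper's argument. The gap is in the $A_5$ part.

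\textbf{The case split for $A_5+D_4$.} Your main route is to read off an $A_5$ condition, or the sextic coefficient of Proposition~\ref{prop:ACriteria}(4), at the corank-2 point. That is not available. Propositions~\ref{prop:ACriteria} and \ref{prop:AktDeg} presuppose corank~1, and the reduction divides by the non-degenerate Hessian entry, here $\Phi_{20}=1-\kappa_0 q$. This entry vanishes exactly at $q=1/\kappa_0$. The automatic vanishing of \eqref{eq:DegSolution} that you noticed is the symptom. The resultant of $C_2$ and \eqref{eq:DegSolution} equals, up to sign, $(\tau_0-\theta_1)$ times the left-hand side of \eqref{eq:bifA5}. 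So it vanishes identically on $\tau_0=\theta_1$, and imposing both conditions at one point carries no information. What does carry information is the closure of the $A_5$ locus: \eqref{eq:bifA5} itself, with that factor removed, restricted to $\tau_0=\theta_1$:
\[
b_{13}\,\frac{b_{05}}{10}\Bigl(\frac{b_{04}}{3}-\kappa_0\sin\theta_0\Bigr)=0 .
\]
This one line is the paper's proof of the case split. Since $b_{13}\neq0$:
\begin{itemize}
\item $b_{05}\neq0$ forces $\frac{b_{04}}{3}=\kappa_0\sin\theta_0$, the double root of $C_2$, hence $E_7$ by Proposition~\ref{prop:corank2Deg}(iii);
\item $b_{05}=0$ with $\frac{b_{04}}{3}\neq\kappa_0\sin\theta_0$ leaves $E_6$.
\end{itemize}
Your ``fallback'' should therefore be the argument itself. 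As written, your case split is asserted rather than derived.

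\textbf{Excluding $A_5+E_6$.} Here no sextic coefficient is needed either, and your sketch contradicts your own earlier computation. At $\tau_0=\theta_1$, \eqref{eq:DegSolution} reads $\frac{b_{05}}{10}\bigl(1-\varepsilon\kappa_0\sin\theta_0\bigr)=0$. The paper uses this directly. For $b_{05}\neq0$, the $A_5$ distance $\varepsilon_0^*$ is pinned to the corank-2 distance $1/(\kappa_0\sin\theta_0)$, where the display above turns it into $E_7$. So it can never sit at the other root of $C_2$, the zero of $\frac{b_{04}}{3}\varepsilon-1$.

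Your claim that the simple root is ``inconsistent with the extra $A_5$ equation'' fails in two ways. At the corank-2 distance you yourself showed that the equation holds automatically. For $b_{05}=0$ the equation vanishes identically on $\tau_0=\theta_1$, so it is satisfied at every root and yields no contradiction. The paper's formula for $\varepsilon_0^*$ likewise requires $b_{05}\neq0$, and the paper files the case $b_{05}=0$ under $A_5+D_4=E_6$. You should handle that case the same way rather than by claiming an inconsistency.
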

        
        \begin{proof}
            The cases $A_4 + D_4 = E_6$ and $A_4 + E_6 = E_7$ are already shown. We assume there is an $A_5$-singularity, then Equation (\ref{eq:bifA5}) holds. If an $E_6$-singularity appear, $\tau_0-\theta_1=0$ and Equation (\ref{eq:bifA5}) become $b_{13}\frac{b_{05}}{10} \qty(\frac{b_{04}}{3} - \kappa_0\sin\theta_0) = 0$. Thus, $A_5 + D_4 = E_6$ if and only if $\frac{b_{05}}{10} = 0$ and $\frac{b_{04}}{3} - \kappa_0\sin\theta_0 \not=0$, and $A_5 + D_4 = E_7$ if and only if $\frac{b_{05}}{10} \not= 0$ holds. In addition, $A_5 + E_6$ cannot happen because the distance $\varepsilon_0^*$ of $A_5$-singularity is calculated from Theorem \ref{prop:corank2Deg} \ronumiii{} and we obtain the following:
                $$\varepsilon_0^* = \frac{b_{05}}{10} \frac{1}{b_{13}(\tau_0-\theta_1) - \frac{b_{05}}{10} \kappa_0 \sin\theta_0} = \frac{1}{\kappa_0\sin\theta_0}.$$
            This implies that combining $E_6$ with $A_5$ cannot occur before combining with $A_5$.
    
            Finally, we consider the case of combining $D_5$ with $A_4$. The 3-jet of $\Phi$ given by the following if a $D_5$ appears, i.e., the coefficient of $s^3$ of $\Phi$, $\kappa_0\tau_0\cos\theta_0 + \kappa_1\sin\theta_0$ equals to 0.
                $$j^3\Phi = \frac{\tau_0\cos\theta_0}{\kappa_1\sin^2\theta_0}(\tau_0-\theta_1)t^2s,$$
            and $\varepsilon=\frac{1}{\kappa_0\sin\theta_0}$. $C(\varepsilon)=0$ holds from the coincidence of the $A_4$. That is
                $$0=\varepsilon^2 (\tau_0-\theta_1)^2 - \qty(\frac{b_{04}}{3}\varepsilon - 1)\qty(\varepsilon \kappa_0\sin\theta_0 -1 ) = \varepsilon^2(\tau_0-\theta_1)^2.$$
            Consequently, we have $j^3\Phi=0$ since $\tau_0-\theta_1=0$.
        \end{proof}
    
        \begin{rmk}
            From the proof of Theorem \ref{prop:AddSings}, $A_5 + D_4 = E_6$ if and only if $\frac{b_{05}}{10} = 0$ and $\frac{b_{04}}{3} - \kappa_0\sin\theta_0 \not=0$. $A_5 + D_4 = E_7$ if and only if $\frac{b_{05}}{10} \not= 0$ or $\frac{b_{05}}{10} = \frac{b_{04}}{3} - \kappa_0\sin\theta_0 =0$.
        \end{rmk}
    
        \begin{rmk}
            We describe the adjacency of singularities for the distance squared unfolding in the cases of regular surfaces and cuspidal cross caps. For regular surfaces, the adjacency was obtained in \cite{FukuiHasegawaPS}, and is given as follows. Parallel surfaces degenerate at each principal radius of curvature.
            
                \begin{minipage}{0.48\textwidth}
                    \centering
                    \begin{tabular}{c|c}
                         Sing. of the D.S.U. & Conditions for surfaces \\ \hline\hline
                         $A_2$ &  neither ridge and umblic\\
                         $A_3$ &  1st. order ridge\\
                         $A_4$ &  2nd. order ridge\\
                         $D_4$ &  umblic\\
                    \end{tabular}        
                \end{minipage}
                \begin{minipage}{0.38\textwidth}
                    $$\xymatrix@R=10pt@C=35pt{
                        &&& D_4 \\
                        A_1 \ar[rrru]^{} \ar[r]_{} & A_2 \ar[r]_{} & A_3 \ar[r]_{} & A_4\\
                    }$$
                \end{minipage}
                
            For cuspidal cross caps, from Theorem \ref{prop:AddSings}, the adjacency is given as follows. There exists a principal radius of curvature, meaning in Remark \ref{rmk:GaussMean}, at which a parallel surface degenerates only $A_4^t$-singularity if the principal radii of curvature are distinct. This does not occur for regular surfaces.
                $$\xymatrix@R=10pt@C=20pt{
                    A_3^t+D_4 \ar[r] & 2A_4^t+D_4 \ar[rrd] \ar[r] & A_4^t+A_5^t+D_4 \ar[r] \ar[rd] \ar[rrd] & \cdots \\
                    &&& A_4^t+E_6 \ar[rd] & A_4^t+E_7 \\
                    &&&& E_7
                }$$
        \end{rmk}

    \subsection{Proof of Propsitions} \label{sec:Proofs}
        We compute the distance squared unfolding of cuspidal cross caps parameterized in Fukui's normal form. To express it, we need explicit forms of $\gamma$, $a$, $b$, $\bm{a}_2$ and $\bm{a}_3$ up to fifth degree, so we denote the coefficients of $s^it^j$ of these by $\gamma_i$, $a_{ij}$, $b_{ij}$, $\bm{a}_2^i$ and $\bm{a}_3^i$ for simply. That is;
            \begin{equation} \label{eq:coeffs}
                \begin{array}{ccccc}
                     \D a =\sum_{i,j=0}^m a_{ij} \frac{s^it^j}{i!j!} + O^{m+1}(s,t), & & b = \D \sum_{i,j=0}^m b_{ij} \frac{s^it^j}{i!j!} + O^{m+1}(s,t), \\
                     \D \bm{a}_2 =\sum_{i=0}^m (\bm{e}_1,\bm{e}_2,\bm{e}_3) \vecTri{a_2^{1i}}{a_2^{2i}}{a_2^{3i}} \frac{s^i}{i!} + O^{m+1}(s), & & \D \bm{a}_3 =\sum_{i=0}^m (\bm{e}_1,\bm{e}_2,\bm{e}_3) \vecTri{a_3^{1i}}{a_3^{2i}}{a_3^{3i}}\frac{s^i}{i!} + O^{m+1}(s), \ \text{and}\\
                     \D \gamma = \sum_{i=0}^m (\bm{e}_1,\bm{e}_2,\bm{e}_3) \vecTri{\gamma_{1i}}{\gamma_{2i}}{\gamma_{3i}} \frac{s^i}{i!} + O^{m+1}(s). & &
                \end{array}
            \end{equation}
        For example, $\gamma_3= \vecTri{-\kappa_0^2}{\kappa_1}{\kappa_0\tau_0}$, $a_{02}=1$, $a_{i3}=0$ and $\D a_{14}=-\frac{3}{2}b_{03}b_{13}$ from Equation \eqref{eq:gamma} and Lemma \ref{lem:FukuiFormCoeff}. Since
            $$\begin{array}{rcl}
                f(s,t) &=& \gamma+a\bm{a}_2+b\bm{a}_3  \\
                 &=& \D \sum_{i=0}^m \qty{ \vecTri{\gamma_{1i}}{\gamma_{2i}}{\gamma_{3i}}  + \qty(\sum_{k,l=0}^m a_{kl} \frac{s^kt^l}{k!l!}) \vecTri{a_2^{1i}}{a_2^{2i}}{a_2^{3i}} + \qty(\sum_{k,l=0}^m b_{kl} \frac{s^kt^l}{k!l!})\vecTri{a_3^{1i}}{a_3^{2i}}{a_3^{3i}}}\frac{s^i}{i!} + O^{m+1}(s,t)\\
                 &=&\D \sum_{i=0}^m \qty{ \vecTri{\gamma_{1i}}{\gamma_{2i}}{\gamma_{3i}}  + \sum_{k,l=0}^m\qty{a_{kl}  \vecTri{a_2^{1i}}{a_2^{2i}}{a_2^{3i}} + b_{kl} \vecTri{a_3^{1i}}{a_3^{2i}}{a_3^{3i}}}\frac{s^kt^l}{k!l!}}\frac{s^i}{i!} + O^{m+1}(s,t)\\
            \end{array}$$$$\begin{array}{rcl}
                 &=&\D \sum_{i=0}^m \vecTri{\gamma_{1i}}{\gamma_{2i}}{\gamma_{3i}} \frac{s^i}{i!} + \sum_{i,k,l=0}^m \qty{a_{kl}\vecTri{a_2^{1i}}{a_2^{2i}}{a_2^{3i}} + b_{kl}\vecTri{a_3^{1i}}{a_3^{2i}}{a_3^{3i}}}\frac{s^{i+k}}{i!k!}\cdot\frac{t^l}{l!} + O^{m+1}(s,t)\\
                 &=& \D \sum_{l=0}^m P_l \frac{t^l}{l!} + O^{m+1}(s,t)\ =\ \sum_{l=0}^m \vecTri{P_l^1}{P_l^2}{P_l^3} \frac{t^l}{l!} + O^{m+1}(s,t),
            \end{array}$$
        the distance squared unfolding of cuspidal cross caps is:
            \begin{equation}\label{eq:DSF}
                \begin{array}{rcl}
                     \Phi(s,t,x,y,z,\varepsilon) &=& ||(x,y,z)-f(s,t)||^2-\varepsilon^2  \\
                     &=& \D \left| \left|\vecTri{x}{y}{z} - \sum_{l=0}^m \vecTri{P_l^1}{P_l^2}{P_l^3} \frac{t^l}{l!} + O^{m+1}(s,t) \right| \right|^2 - \varepsilon^2,
                \end{array}
            \end{equation}
        where we denote $\D P_0=\sum_{i=0}^m \vecTri{\gamma_{1i}}{\gamma_{2i}}{\gamma_{3i}} \frac{s^i}{i!}$ and $\D P_l=\sum_{i,k=0}^m \qty{a_{kl}\vecTri{a_2^{1i}}{a_2^{2i}}{a_2^{3i}} + b_{kl}\vecTri{a_3^{1i}}{a_3^{2i}}{a_3^{3i}}}\frac{s^{i+k}}{i!k!} \ (l>0)$. Note that $a_{k0}\equiv b_{k0}\equiv 0$ because of the definition of Fukui's normal form. For example, by Lemma \ref{lem:FukuiFormCoeff},
            $$\begin{array}{rcccl}
                P_1 &=& \D \sum_{i,k=0}^m \qty{a_{k1}\vecTri{a_2^{1i}}{a_2^{2i}}{a_2^{3i}} + b_{k1}\vecTri{a_3^{1i}}{a_3^{2i}}{a_3^{3i}}}\frac{s^{i+k}}{i!k!} &=& 0, \\
                P_2 &=& \D \sum_{i,k=0}^m \qty{a_{k2}\vecTri{a_2^{1i}}{a_2^{2i}}{a_2^{3i}} + b_{k2}\vecTri{a_3^{1i}}{a_3^{2i}}{a_3^{3i}}}\frac{s^{i+k}}{i!k!} 
                &=& \D \sum_{i=0}^m \vecTri{a_2^{1i}}{a_2^{2i}}{a_2^{3i}}\frac{s^{i}}{i!}, \ \text{and} \\
                P_3 &=& \D \sum_{i,k=0}^m \qty{a_{k3}\vecTri{a_2^{1i}}{a_2^{2i}}{a_2^{3i}} + b_{k3}\vecTri{a_3^{1i}}{a_3^{2i}}{a_3^{3i}}}\frac{s^{i+k}}{i!k!}
                &=& \D \sum_{i,k=0}^m b_{k3}\vecTri{a_3^{1i}}{a_3^{2i}}{a_3^{3i}}\frac{s^{i+k}}{i!k!}.
            \end{array}$$
        In addition, because of Lemma \ref{lem:FukuiFormCoeff}, $a_4=-\frac{3}{4}b_3^2$, $a_5=-2b_3b_4$ in the coefficients of Equation \eqref{eq:FukuiNormalForm}, we rewrite Equations \eqref{eq:DSF} in the coefficients of Equation \eqref{eq:coeffs}.
            $$\begin{array}{l}
                \D a_4 = -\frac{3}{4}b_3^2 = -\frac{3}{4} (b_{03} + b_{13}s+\cdots)^2 = -\frac{3}{4} (b_{03}^2 + 2b_{03}b_{13}s+\cdots)  \\
                \D a_5 = -2b_3b_4  = -2(b_{03} + \cdots)(b_{04} + \cdots) = -2(b_{03}b_{04} + \cdots) 
            \end{array}$$
        Thus, $a_{04}=-\frac{3}{4}b_{03}^2$, $a_{14}=-\frac{3}{2}b_{03}b_{13}$ and $a_{05}=-2b_{03}b_{04}$. Using these coefficients, we obtain
            $$\begin{array}{rcl}
                \D j^5\qty(P_4\frac{t^4}{4!}) &=& j^5\qty(\D \sum_{i,k=0}^m \qty{a_{k4}\vecTri{a_2^{1i}}{a_2^{2i}}{a_2^{3i}} + b_{k4}\vecTri{a_3^{1i}}{a_3^{2i}}{a_3^{3i}}}\frac{s^{i+k}}{i!k!}\cdot \frac{t^4}{4!}) \\
                
                &=& \D \qty{a_{04}\vecTri{a_2^{10}}{a_2^{20}}{a_2^{30}} + b_{04}\vecTri{a_3^{10}}{a_3^{20}}{a_3^{30}}}\frac{s^{0}}{0!0!}\cdot\frac{t^4}{4!} + \qty{a_{04}\vecTri{a_2^{11}}{a_2^{21}}{a_2^{31}} + b_{04}\vecTri{a_3^{11}}{a_3^{21}}{a_3^{31}}}\frac{s^{1}}{1!0!}\cdot\frac{t^4}{4!} \\
                & &\D \ + \ \qty{a_{14}\vecTri{a_2^{10}}{a_2^{20}}{a_2^{30}} + b_{14}\vecTri{a_3^{10}}{a_3^{20}}{a_3^{30}}}\frac{s^{1}}{0!1!}\frac{t^4}{4!}\\
                &=& \D \qty{-\frac{3}{4}b_{03}^2\vecTri{a_2^{10}}{a_2^{20}}{a_2^{30}} + b_{04}\vecTri{a_3^{10}}{a_3^{20}}{a_3^{30}}}\frac{s^{0}}{0!0!}\cdot\frac{t^4}{4!} + \qty{-\frac{3}{4}b_{03}^2\vecTri{a_2^{11}}{a_2^{21}}{a_2^{31}} + b_{04}\vecTri{a_3^{11}}{a_3^{21}}{a_3^{31}}}\frac{s^{1}}{1!0!}\cdot\frac{t^4}{4!} \\
            \end{array}$$$$\begin{array}{rcl}
                & &\D \ + \ \qty{-\frac{3}{2}b_{03}b_{13} \vecTri{a_2^{10}}{a_2^{20}}{a_2^{30}} + b_{14}\vecTri{a_3^{10}}{a_3^{20}}{a_3^{30}}}\frac{s^{1}}{0!1!}\frac{t^4}{4!}, \\
            \end{array}$$
            $$\begin{array}{rcl}
                 \D j^5\qty(P_5\frac{t^5}{5!})&=& \D j^5\qty(\sum_{i,k=0}^m \qty{a_{k5}\vecTri{a_2^{1i}}{a_2^{2i}}{a_2^{3i}} + b_{k5}\vecTri{a_3^{1i}}{a_3^{2i}}{a_3^{3i}}}\frac{s^{i+k}}{i!k!}\cdot \frac{t^5}{5!})\\
                
                &=& \D \qty{-2b_{03}b_{04}\vecTri{a_2^{10}}{a_2^{20}}{a_2^{30}} + b_{05}\vecTri{a_3^{10}}{a_3^{20}}{a_3^{30}}}\frac{s^{0}}{0!0!}\cdot\frac{t^5}{5!}, \ \text{and}
            \end{array}$$
            $$j^5\qty(P_5\frac{t^5}{5!}) = j^5\qty(\sum_{i,k=0}^m \qty{a_{k5}\vecTri{a_2^{1i}}{a_2^{2i}}{a_2^{3i}} + b_{k5}\vecTri{a_3^{1i}}{a_3^{2i}}{a_3^{3i}}}\frac{s^{i+k}}{i!k!}\cdot \frac{t^5}{5!}) = \qty{-2b_{03}b_{04}\vecTri{a_2^{10}}{a_2^{20}}{a_2^{30}} + b_{05}\vecTri{a_3^{10}}{a_3^{20}}{a_3^{30}}}\frac{t^5}{5!}.$$
    
        We compute the distance squared unfolding.
            $$\begin{array}{rcl}
                \Phi-r_0 &=& \D \left| \left|\vecTri{x}{y}{z} - \sum_{l=0}^m \vecTri{P_l^1}{P_l^2}{P_l^3} \frac{t^l}{l!} + O^{m+1}(s,t) \right| \right|^2 - \varepsilon^2 -r_0\\
                &=& \D - \varepsilon^2 + \qty(x-\sum_{l=0}^mP_l^1\frac{t^l}{l!})^2 + \qty(y-\sum_{l=0}^mP_l^2\frac{t^l}{l!})^2 + \qty(z-\sum_{l=0}^mP_l^3\frac{t^l}{l!})^2 - r_0 + O^{m+1}(s,t) 
            \end{array}$$
            $$\begin{array}{cl}
                = &\D - \sum_{l=0}^m \qty(2xP_l^1 + 2yP_l^2 + 2z P_l^3)\frac{t^l}{l!} + \qty(\sum_{l=0}^mP_l^1\frac{t^l}{l!})^2 + \qty(\sum_{l=0}^mP_l^2\frac{t^l}{l!})^2 + \qty(\sum_{l=0}^mP_l^3\frac{t^l}{l!})^2  + O^{m+1}(s,t) \\
                
                = &\D - \sum_{l=0}^m \qty(2xP_l^1 + 2yP_l^2 + 2z P_l^3)\frac{t^l}{l!} + \sum_{k=0}^{m} \sum_{l=0}^k\qty(\binom{k}{l} P^1_l P^1_{k-l} + \binom{k}{l} P^2_l P^2_{k-l} + \binom{k}{l} P^3_l P^3_{k-l} )\frac{t^{k}}{k!} + O^{m+1}(s,t)\\
    
                = & \D - \sum_{k=0}^m \qty{\qty(2xP_k^1 + 2yP_k^2 + 2z P_k^3) +  \sum_{l=0}^k\binom{k}{l}\qty( P^1_l P^1_{k-l} +  P^2_l P^2_{k-l} + P^3_l P^3_{k-l} )}\frac{t^{k}}{k!} + O^{m+1}(s,t),\\
            \end{array}$$
        where $r_0=- \varepsilon^2 + x^2 + y^2 + z^2$. This is because 
            $$\qty(\sum_{l=0}^m P^i_l\frac{t^l}{l!})^2 = \sum_{k=0}^{m} \qty(\sum_{l=0}^k \binom{k}{l} P^i_l P^i_{k-l} )\frac{t^{k}}{k!} + O^{m+1}(t).$$
    
        We give Taylor expansion $\D \sum_{i=0}^m P^p_{ik}\frac{s^i}{i!} + O^{m+1}(s)$ of $P_k^p(s)$. Then,
            $$2xP_k^1 + 2yP_k^2 + 2z P_k^3 = \sum_{i=0}^m (2xP_{ik}^1 + 2yP_{ik}^2 + 2zP_{ik}^3)\frac{s^i}{i!} + O^{m+1}(s),\ \text{and}$$
            $$\begin{array}{rcl}
                \D P^p_l P^p_{k-l} &=& \D \qty(\sum_{i=0}^m P^p_{il}\frac{s^i}{i!}) \qty(\sum_{i=0}^m P^p_{i,k-l}\frac{s^i}{i!}) + O^{m+1}(s) \\
                 &=& \D \sum_{i=0}^m\sum_{j=0}^m P^p_{il} P^p_{j,k-l}\frac{s^{i+j}}{i!j!} + O^{m+1}(s)\\
                 &=& \D \sum_{n=0}^{2m}\sum_{i+j=n=0}^{2m} P^p_{il} P^p_{j,k-l}\frac{s^{n}}{i!j!} + O^{m+1}(s)\\
            \end{array}$$$$\begin{array}{rcl}
                 &=& \D \sum_{n=0}^{2m}\qty(\sum_{i+j=n=0}^{m} P^p_{il} P^p_{j,k-l}\frac{s^{n}}{i!j!} + \sum_{i+j=n=m+1}^{2m} P^p_{il} P^p_{j,k-l}\frac{s^{n}}{i!j!}) + O^{m+1}(s)\\
                 &=& \D \sum_{n=0}^{m}\qty(\sum_{i=0}^{n} P^p_{il} P^p_{n-i,k-l}\frac{s^{n}}{i!(n-i)!}) + O^{m+1}(s)\\
                 &=& \D \sum_{n=0}^{m}\qty(\sum_{i=0}^{n} \binom{n}{i}P^p_{il} P^p_{n-i,k-l})\frac{s^{n}}{n!} + O^{m+1}(s).\\
            \end{array}$$
        Thus,
            $$\begin{array}{cl}
                & \D \sum_{l=0}^k\binom{k}{l}\qty( P^1_l P^1_{k-l} +  P^2_l P^2_{k-l} + P^3_l P^3_{k-l} ) \\
    
                =& \D \sum_{l=0}^k\sum_{n=0}^{m}\sum_{i=0}^{n}\binom{k}{l}\binom{n}{i}\qty( P^1_{il} P^1_{n-i,k-l} + P^2_{il} P^2_{n-i,k-l} + P^3_{il} P^3_{n-i,k-l})\frac{s^{n}}{n!} + O^{m+1}(s)\\
    
                =& \D \sum_{n=0}^{m}\sum_{l=0}^k\sum_{i=0}^{n}\binom{k}{l}\binom{n}{i}\qty( P^1_{il} P^1_{n-i,k-l} + P^2_{il} P^2_{n-i,k-l} + P^3_{il} P^3_{n-i,k-l})\frac{s^{n}}{n!} + O^{m+1}(s).
            \end{array}$$
        Therefore, 
            $$\begin{array}{cl}
                & -\Phi+r_0 + O^{m+1}(s,t) \\
                =& \D \sum_{n,k=0}^m \qty{ (2xP_{nk}^1 + 2yP_{nk}^2 + 2zP_{nk}^3) + \sum_{l=0}^k\sum_{i=0}^{n}\binom{k}{l}\binom{n}{i}\qty( P^1_{il} P^1_{n-i,k-l} + P^2_{il} P^2_{n-i,k-l} + P^3_{il} P^3_{n-i,k-l})}\frac{s^n}{n!}\frac{t^{k}}{k!}.
            \end{array}$$
        Let $\Phi$ be $\D \sum_{i,j=0}^m \Phi_{ij} \frac{s^it^j}{i!j!} + O^{m+1}(s,t)$, then
            $$\begin{array}{rcl}
                \Phi_{00}-r_0 &=& \D -\qty{(2xP_{00}^1 + 2yP_{00}^2 + 2zP_{00}^3) + \sum_{l=0}^0\sum_{i=0}^{0}\binom{0}{l}\binom{0}{i}\qty( P^1_{il} P^1_{0-i,0-l} + P^2_{il} P^2_{0-i,0-l} + P^3_{il} P^3_{0-i,0-l})}\\
                &=& \D -\qty{(2xP_{00}^1 + 2yP_{00}^2 + 2zP_{00}^3) + \qty( P^1_{00} P^1_{00} + P^2_{00} P^2_{00} + P^3_{00} P^3_{00})}\\
                &=& 0,
            \end{array}$$
        since $P^p_{00}=\gamma_{p0} =0$ from Equation \eqref{eq:gamma}, and when $n>0$ or $k>0$,
            $$-\Phi_{nk} = (2xP_{nk}^1 + 2yP_{nk}^2 + 2zP_{nk}^3) + \sum_{l=0}^k\sum_{i=0}^{n}\binom{k}{l}\binom{n}{i}\qty( P^1_{il} P^1_{n-i,k-l} + P^2_{il} P^2_{n-i,k-l} + P^3_{il} P^3_{n-i,k-l}).$$
        We obtain $\Phi$ up to 5 order. The following are the coefficients of $\Phi$.
    
        
            $$\begin{array}{rclcrcl}
                 \Phi_{00} &=& -\varepsilon^2 + x^2 + y^2 + z^2 & &\Phi_{30} &=& \frac{1}{3}\qty(\kappa_0^2 x - \kappa_1 y + \kappa_0\tau_0 z)\\
                 \Phi_{10} &=& -2x & & \Phi_{21} &=& 0 \\
                 \Phi_{01} &=& 0 & & \Phi_{12} &=& \kappa_0\cos\theta_0x -(\tau_0-\theta_1)(\sin\theta_0 y + \cos\theta_0 z)\\ 
                 \Phi_{20} &=& 1-\kappa_0y & & \Phi_{03} &=& 0 \\
                 \Phi_{11} &=& 0 & & \\ 
                 \Phi_{02} &=& -\cos\theta_0y + \sin\theta_0 z & & \\
            \end{array}$$
    
            $$\begin{array}{rclcrcl}
                 \Phi_{40} &=& \frac{\kappa_0\kappa_1}{4}x + \frac{\kappa_0\tau_0^2-\kappa_2+\kappa_0^3}{12}y - \frac{\kappa_0\tau_1-2\kappa_1\tau_0}{12}z - \frac{\kappa_0^2}{12} & & \Phi_{41} &=& 0 \\
                 \Phi_{31} &=& 0 & & \Phi_{05} &=& -\frac{1}{60}b_{05}(\sin\theta_0 y + \cos\theta_0 z) \\ 
                 \Phi_{13} &=& -\frac{1}{3}b_{13}(\sin\theta_0 y + \cos\theta_0 z) \\ 
                 \Phi_{04} &=& -\frac{1}{12}b_{04}(\sin\theta_0 y + \cos\theta_0 z) + \frac{1}{4} \\ 
            \end{array}$$
    
            $$\begin{array}{rcl}
                \Phi_{22} &=& \frac{1}{2}(\kappa_0(\tau_0-2\theta_1)\sin\theta_0 + \kappa_1\cos\theta_0)x -\frac12((\tau_0-\theta_1)^2\sin\theta_0 + (\tau_1-\theta_2)\cos\theta_0)z \\ 
                 && \ -\ \frac12((\tau_1-\theta_2)\sin\theta_0 - ((\tau_0-\theta_1)^2+\kappa_0^2)\cos\theta_0)y -\frac12\kappa_0\cos\theta_0\\
                 \\
                 \Phi_{50} &=& -\frac{1}{60}(\kappa_0^2\tau_0^2-4\kappa_0\kappa_2-3\kappa_1^2+\kappa_0^4)x + \frac{1}{60}(3\kappa_0\tau_0\tau_1+3\kappa_1\tau_0^2-\kappa_3+6\kappa_0\kappa_1)y \\
                 && \ -\ \frac{1}{60}(\kappa_0\tau_2+3\kappa_1\tau_1-\kappa_0\tau_0^3+3\kappa_2\tau_0-\kappa_0^3\tau_0)z - \frac{1}{12}\kappa_0^2\kappa_1\\
                 \Phi_{14} &=& \frac{1}{12}(b_{04}\kappa_0\sin\theta_0x + (b_{04}(\tau_0-\theta_1)\cos\theta_0 -b_{14}\sin\theta_0)y - (b_{04}(\tau_0-\theta_1)\sin\theta_0 + b_{14}\cos\theta_0)z) \\ 
            \end{array}$$
            
            Substituting these coefficients into Proposition \ref{prop:ACriteria}, Proposition \ref{prop:DCriterion} and Proposition \ref{prop:ECriteria}, We obtain Proposition \ref{prop:AksDeg}, Proposition \ref{prop:AktDeg} and Proposition \ref{prop:corank2Deg}.

\section{Singularities of parallel surfaces} \label{Sec:4}
    \subsection{Result about the singularity of the parallel surface of the cuspidal cross cap}
         Parallel surfaces \PS of a surface $f(s,t)$ is expressed by $\ps \coloneq f + \varepsilon \Nor$. Then let the signed area density function $\Lam \coloneq \det|f^\varepsilon_s, f^\varepsilon_t, \Nor|$ of parallel surfaces $f^\varepsilon$, the locus of the singularity $\Locus$ such that $\Lam \circ \Locus =0$, and a null vector field $\Null \coloneq \eta_1^\varepsilon \partial_s + \eta_2^\varepsilon \partial_t$ provide that $\Lam$ is nondegenerate and $\ker d\ps|_{\Locus}$ is of 1-dimensional.

        The equation $\Lam=0$ can be solved with respect to $s$ or $t$ because of the implicit function theorem, allowing each variable to serve as the parameter given $\lambda^\varepsilon=0$ is nondegenerate, i.e., the coefficients of $s$ or $t$ in its linear part are non-zero from Lemma \ref{lem:LamEx}. Here, let $C_1 := \varepsilon \kappa_0\sin\theta_0 + 1$ and $C_2 := \varepsilon^2(\tau_0-\theta_1)^2 - \qty(\frac{b_{04}}{3}\varepsilon + 1)(\varepsilon \kappa_0\sin\theta_0 + 1)$ from coefficients $s,t$ of $\Lam = \frac{b_{13}\varepsilon}{2} C_1 s - C_2 t + O^2(s,t)$. 
    
        First, we assume that the coefficient of $s$ in $\Lam=0$ does not vanish, i.e., $C_1 \not= 0$. From Lemma, solving $\Lam = 0$ with respect to $s$ yields the following:
         $$s = \frac{2}{b_{13}\varepsilon}\frac{C_2}{C_1} t 
            +O^2(t).$$
        The Jacobian matrix of $f^\varepsilon$ along the locus $\gamma^\varepsilon=(s,t)$ is given by the following due to the proof of Lemma \ref{lem:EFG}.
            $$(f_s^\varepsilon,f_t^\varepsilon) \circ \gamma ^\varepsilon(t)= \qty(\begin{array}{cc}
                 \d C_1 + \frac{2C_2}{C_1b_{13}}(\tau_0-\theta_1)(\kappa_1\sin\theta_0+\kappa_0\theta_1\cos\theta_0)t + O^2(t) & \varepsilon(\tau_0-\theta_1)t + O^2(t) \\
                 \varepsilon(\tau_0-\theta_1) + \frac{1}{2C_1b_{13}}(4C_2(\tau_1-\theta_2)+\varepsilon C_1b_{13}^2)t + O^2(t) & \frac{\varepsilon^2}{C_1}(\tau_0-\theta_1)^2t + O^2(t)\\
                 O^2(t) & O^2(t)\\
            \end{array})$$
        We take a null vector field 
            $$\eta^\varepsilon\circ \gamma ^\varepsilon(t) = -\qty(\frac{\varepsilon^2}{C_1}(\tau_0-\theta_1)^2t + O^2(t))\partial_s+\qty(\varepsilon(\tau_0-\theta_1) + \frac{1}{2C_1b_{13}}(4C_2(\tau_1-\theta_2)+\varepsilon C_1b_{13}^2)t + O^2(t))\partial_t$$
        from the second row of the Jacobian matrix. This null vector field $\eta^\varepsilon$ vanishes at the origin if $\tau_0-\theta_1=0$, so we assume $\tau_0-\theta_1\not=0$.
    
        \begin{rmk}
            If we use the first row of the Jacobian matrix to obtain a null vector field $\eta^\varepsilon$, then $\eta^\varepsilon$ vanishes at the origin when $C_1 = 0$. Although this issue is irrelevant when the parameter is $t$, it is safer not to use the first row of the Jacobian matrix.
        \end{rmk}
    
         The normal vector of parallel surfaces corresponds to that of original surfaces, so we substitute the normal vector $\nu$ into the singular locus $\gamma^\varepsilon$.
            $$\nu\circ \gamma^\varepsilon = (O^2(t),O^2(t),1+O^2(t))$$
    
        The derivative of $\nu$ at $\gamma^\varepsilon$ from the definition of $\eta^\varepsilon$:
            $$\begin{array}{rcl}
                 d\nu_{\gamma^\varepsilon}(\eta^\varepsilon) &=& (\nu_s,\nu_t)\eta^\varepsilon  \\
                 &=& (f_s,f_t)\cdot W\cdot\eta^\varepsilon.
            \end{array}$$
        The singular locus $\gamma^\varepsilon$ is substituted into the Jacobi matrix of the original surface and the Weingarten matrix $W$ from Lemma \ref{lem:Weingarten}.
            $$(f_s,f_t)\circ \gamma ^\varepsilon(t) = \qty(\begin{array}{cc}
                 1 + O^2(t) & O^2(t) \\
                 O^2(t) & t + O^2(t) \\
                 O^2(t) & O^2(t)
            \end{array})$$
            \begin{multline*}
                W\circ \gamma ^\varepsilon(t) = \\
                \qty(\begin{array}{cc}
                     -\kappa_0\sin\theta_0 - \frac{2C_2}{C_1b_{13}\varepsilon}(\kappa_1\sin\theta_0+\kappa_0\theta_1\cos\theta_0)t + O^2(t) & \frac{1}{t}\qty(\theta_1-\tau_0-\frac{A}{2\varepsilon b_{13}C_1}t +O^2(t)) \\
                     (\theta_1-\tau_0)t + O^2(t) & -\frac{b_{04}\varepsilon C_1+3C_2}{3\varepsilon C_1} - \frac{B}{24C_1^2b_{13}^2\varepsilon^2}t + O^2(t),
                \end{array})
            \end{multline*}
        where $A=\varepsilon b_{13}^2C_1-4(\theta_2-\tau_1)C_2$ and $B=(3C_1^2b_{05}b_{13}^2\varepsilon^2+16C_1C_2b_{13}b_{14}\varepsilon+24C_2^2b_{23})$.  So, $d\nu_{\gamma^\varepsilon}(\eta^\varepsilon)$ is calculated as follows:
            $$d\nu_{\gamma^\varepsilon}(\eta^\varepsilon)=\qty(\begin{array}{c}
                    -\frac{\varepsilon}{C_1}(\theta_1-\tau_0)^2t + O^2(t)  \\
                    -(\theta_1-\tau_0)t + O^2(t) \\
                    O^2(t) 
            \end{array})$$
        and $\frac{d}{dt}f^\varepsilon\circ\gamma^\varepsilon$ is given by:
            $$\begin{array}{rcl}
                 \frac{d}{dt}f^\varepsilon\circ\gamma^\varepsilon &=& (f^\varepsilon_s, f^\varepsilon_t) \vecDual{\frac{ds}{dt}}{\frac{dt}{dt}} \\
                 &=& \qty(\begin{array}{cc}
                     \d C_1 + \frac{2C_2}{C_1b_{13}}K(\tau_0-\theta_1)t + O^2(t) & \varepsilon(\tau_0-\theta_1)t + O^2(t) \\
                     \varepsilon(\tau_0-\theta_1) + \frac{D}{2C_1b_{13}}t + O^2(t) & \frac{\varepsilon^2}{C_1}(\tau_0-\theta_1)^2t + O^2(t)\\
                     O^2(t) & O^2(t)\\
                \end{array}) \vecDual{\frac{2}{b_{13}\varepsilon}\frac{C_2}{C_1} 
                +O^1(t)}{1}   \\
                &=& \qty(\begin{array}{c}
                    \frac{2C_2}{\varepsilon b_{13}} + O^1(t)\\
                    \frac{2C_2}{C_1b_{13}}(\tau_0-\theta_1) + O^1(t) \\
                    O^1(t)
                \end{array}),
            \end{array}$$
        where $K=\kappa_1\sin\theta_0+\kappa_0\theta_1\cos\theta_0$, we call edge inflectional curvature in Lemma \ref{lem:Curvatures}, and $D=4C_2(\tau_1-\theta_2)+\varepsilon C_1b_{13}^2$
        We consider that $\eta$ is not transverse to $S(f)$ since
            \begin{equation} \label{eq:independent}
                \begin{split}
                    \eta\lambda(0)&=\qty(\frac{\varepsilon^2}{C_1}(\tau_0-\theta_1)^2t + O^2(t)) \cdot \qty(\frac{b_{13}\varepsilon}{2}C_1+O^1(s,t)) \\
                    &\ \ \ + \qty(\varepsilon(\tau_0-\theta_1) + \frac{1}{2C_1b_{13}}(4C_2(\tau_1-\theta_2)+\varepsilon C_1b_{13}^2) + O^2(t)) \cdot \qty(C_2 + O^1(s,t))\\
                    &= \varepsilon(\tau_0-\theta_1)C_2 + O^1(t).
                \end{split}
            \end{equation}
        
        The above arguments lead to the following theorem.
        
        \begin{thm}\label{thm:PSofCCC}
            A parallel surface of a cuspidal cross caps as written by Fukui's normal form and $\tau_0-\theta_1\not=0$ is $\mathcal{A}$-equivalent to a cuspidal cross cap if and only if $C_2(\varepsilon)\not=0$.
        \end{thm}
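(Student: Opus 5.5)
The plan is to apply the standard criterion for cuspidal cross caps of frontals (Fujimori--Saji--Umehara--Yamada): a frontal $f^\varepsilon$ with unit normal $\Nor$ has a cuspidal cross cap at a singular point $p$ if and only if three conditions hold. First, $p$ is non-degenerate, i.e.\ $d\Lam(p)\neq0$. Second, the null vector $\Null(p)$ is transverse to the singular curve, i.e.\ $\Null\Lam(p)\neq 0$. Third, the function $\psi(t)=\det\bigl(\tfrac{d}{dt}(f^\varepsilon\circ\Locus),\,\Nor\circ\Locus,\,d\Nor(\Null)\bigr)$ satisfies $\psi(0)=0$ and $\psi'(0)\neq0$. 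Note that $\Nor=\bm{\nu}$ is also a unit normal for $f^\varepsilon$, so $f^\varepsilon$ is a frontal. All the ingredients have already been written out in the computations preceding the statement, so the proof amounts to evaluating these three conditions at the origin and showing that, under $\tau_0-\theta_1\neq0$, each either holds automatically or is equivalent to $C_2(\varepsilon)\neq0$.

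\emph{Step 1.} The linear part is $\Lam=\frac{b_{13}\varepsilon}{2}C_1 s-C_2 t+O^2(s,t)$. Since $f$ is a cuspidal cross cap, $b_{13}\neq0$ by Fact \ref{fact:INV}. For $\varepsilon\neq0$, non-degeneracy can fail only if $C_1=C_2=0$, so if $C_2\neq0$ the origin is non-degenerate. The computation in the text assumes $C_1\neq0$ in order to parametrize $\Locus$ by $t$. If $C_1=0$ but $C_2\neq0$, I would instead solve $\Lam=0$ for $t$ as a function of $s$, and the same argument goes through.

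\emph{Step 2.} By \eqref{eq:independent}, $\Null\Lam(0)=\varepsilon(\tau_0-\theta_1)C_2$. With $\tau_0-\theta_1\neq0$ and $\varepsilon\neq0$, transversality of the null direction is therefore equivalent to $C_2\neq0$. This gives the ``only if'' direction immediately. If $C_2=0$, then $\Null(0)$ is tangent to the singular curve, which rules out a cuspidal cross cap (and in fact any cuspidal edge, swallowtail or cuspidal cross cap type criterion). If in addition $C_1=0$, the point is degenerate, which excludes these singularities as well.

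\emph{Step 3.} For the converse, assume $C_2\neq0$ and evaluate $\psi$. From the text, $\Nor\circ\Locus=(O^2,O^2,1+O^2)$ and $d\Nor(\Null)=\bigl(-\frac{\varepsilon}{C_1}(\tau_0-\theta_1)^2t,\,-(\tau_0-\theta_1)t,\,0\bigr)+O^2(t)$. Also $\frac{d}{dt}(f^\varepsilon\circ\Locus)=\bigl(\frac{2C_2}{\varepsilon b_{13}},\,\frac{2C_2}{C_1b_{13}}(\tau_0-\theta_1),\,0\bigr)+O^1(t)$. Expanding the determinant along the third components, the leading term of $\psi$ is the $2\times2$ minor. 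This gives $\psi(0)=0$ and
$$\psi'(0)=-\frac{2C_2}{\varepsilon b_{13}}(\tau_0-\theta_1)+\frac{2C_2}{C_1b_{13}}(\tau_0-\theta_1)\cdot\frac{\varepsilon}{C_1}(\tau_0-\theta_1)^2 .$$
Simplifying, this equals $\frac{2C_2(\tau_0-\theta_1)}{\varepsilon b_{13}C_1^2}\bigl(\varepsilon^2(\tau_0-\theta_1)^2-C_1^2\bigr)$, up to sign conventions.

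\emph{Main obstacle.} The main obstacle is this last step. A naive evaluation produces a factor such as $\varepsilon^2(\tau_0-\theta_1)^2-C_1^2$, which is not obviously nonzero. I therefore expect to recheck two things with care: the $O^1(t)$ corrections to the first two entries of $\frac{d}{dt}(f^\varepsilon\circ\Locus)$, which multiply the $O(1)$ entry $1$ of $\Nor$ only through third components (so they should be harmless), and the exact $t$-coefficient of $d\Nor(\Null)$ computed via $(f_s,f_t)W\Null$. The relevant product is $W_{22}$ times $t\cdot\eta_2$ together with the $\frac1t$ entry of $W$. I expect that the true coefficient, once the $\frac{1}{t}$ term of $W$ is paired with the $O(t)$ component $\eta_1$, cancels the spurious piece so that $\psi'(0)$ is a nonzero multiple of $C_2(\tau_0-\theta_1)$ alone. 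If that cancellation fails, I would fall back on checking the cuspidal cross cap criterion through the equivalent condition on $\det(\hat\gamma',\Nor,d\Nor(\Null))$ using the invariant formulation from \cite{SUY}, i.e.\ that the limiting normal curvature along the new singular curve vanishes to first order. Finally, I would combine Steps 1--3 to conclude the equivalence.
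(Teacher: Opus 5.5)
Your route is the same as the paper's. You use the cuspidal cross cap criterion for the frontal $\ps$: non-degeneracy from $\Lam=\frac{b_{13}\varepsilon}{2}C_1s-C_2t+O^2(s,t)$, transversality from $\Null\Lam(0)=\pm\varepsilon(\tau_0-\theta_1)C_2$, and $\psi(0)=0\neq\psi'(0)$. Steps 1 and 2 are fine and give the ``only if'' direction. One side remark is wrong: swallowtails do have $\Null\Lam(p)=0$, but this plays no role here.

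The genuine gap is Step 3. As written, the ``if'' direction is not proved. Your factor $\varepsilon^2(\tau_0-\theta_1)^2-C_1^2$ can vanish with $C_2\neq0$, and you only hope that some cancellation removes it. That factor comes from a sign error in the $\bm{a}_2$-component of $d\Nor(\Null)$. It is $+(\tau_0-\theta_1)t$, which the paper writes as $-(\theta_1-\tau_0)t$, not $-(\tau_0-\theta_1)t$. Your $\bm{a}_1$-component is correct.

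The quickest check is as follows. With the convention $(\ps_s,\ps_t)=(f_s,f_t)(I+\varepsilon W)$ used for $C_1,C_2$, and with $(\Nor_s,\Nor_t)=-(f_s,f_t)W$, the relation $d\ps(\Null)=0$ along $\Locus$ gives
\[
d\Nor(\Null)=\tfrac{1}{\varepsilon}\,df(\Null)=\tfrac{1}{\varepsilon}\bigl(\eta^\varepsilon_1f_s+\eta^\varepsilon_2f_t\bigr).
\]
Along $\Locus$ you have $f_s=\bm{a}_1+O(t^2)$, $f_t=t\bm{a}_2+O(t^2)$, $\eta^\varepsilon_1=-\frac{\varepsilon^2(\tau_0-\theta_1)^2}{C_1}t+O(t^2)$ and $\eta^\varepsilon_2=\varepsilon(\tau_0-\theta_1)+O(t)$. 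Hence
\[
d\Nor(\Null)=\Bigl(-\frac{\varepsilon(\tau_0-\theta_1)^2}{C_1}\bm{a}_1+(\tau_0-\theta_1)\bm{a}_2\Bigr)t+O(t^2).
\]
In the $-(f_s,f_t)W\Null$ computation, the pairing you suspected ($W_{21}\eta^\varepsilon_1=-\varepsilon^2(\tau_0-\theta_1)^3/C_1+O(t)$) cancels part of $W_{22}\eta^\varepsilon_2$. It leaves $(W\Null)_2=-(\tau_0-\theta_1)+O(t)$, so its effect is to fix the sign, not to remove the extra factor.

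Now use $\frac{d}{dt}(\ps\circ\Locus)(0)=\sigma\bigl(C_1\bm{a}_1+\varepsilon(\tau_0-\theta_1)\bm{a}_2\bigr)$ with $\sigma=\frac{2C_2}{\varepsilon b_{13}C_1}$, and $\Nor(0)=\bm{a}_3$. This gives
\[
\psi'(0)=-\frac{2C_2(\tau_0-\theta_1)}{\varepsilon b_{13}C_1^2}\bigl(C_1^2+\varepsilon^2(\tau_0-\theta_1)^2\bigr),
\]
which is the paper's formula up to sign. So $\psi'(0)$ never reduces to a multiple of $C_2(\tau_0-\theta_1)$ alone. The extra factor is a sum of squares, strictly positive because $\varepsilon(\tau_0-\theta_1)\neq0$. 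That observation is what closes the ``if'' direction.

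Your claim about the case $C_1=0$ is correct: parametrizing $\Locus$ by $s$ gives $\psi'(0)=\pm\frac{b_{13}\varepsilon(\tau_0-\theta_1)}{2}\neq0$. Your use of the plain cuspidal cross cap criterion is also cleaner than the paper's proof, which additionally invokes condition (b) of the cuspidal $S_1$ criterion; that condition is not needed here.
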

    
        \begin{proof}
            From \cite{SSk}, $\psi(0)=\qty|\frac{d}{dt}f^\varepsilon\circ\gamma^\varepsilon \ \nu\circ\gamma^\varepsilon\ d\nu_{\gamma^\varepsilon}(\eta^\varepsilon)|_{t=0}=0$ only if a map $f$ has a cuspidal $S_1$-singularity. Then,
                $$\psi(0)= \qty|\frac{d}{dt}f^\varepsilon\circ\gamma^\varepsilon \ \nu\circ\gamma^\varepsilon\ d\nu_{\gamma^\varepsilon}(\eta^\varepsilon)|_{t=0} = \frac{2C_2(\tau_0-\theta_1)}{C_1^2b_{13}\varepsilon}(C_1^2+\varepsilon^2(\tau_0-\theta_1)^2)t + O^2(t)$$
                
            We will discuss the case of the parameter $s$. That is solving $\lambda^\varepsilon=0$ for $t$. This necessary and sufficient condition is $C_2\not=0$ according to the implicit function theorem. For the same computation as in Theorem \ref{thm:PSofCCC},
                $$\psi(t)=\frac{b_{13}\varepsilon(\tau_0-\theta_1)}{2C_2}(C_1^2+\varepsilon^2(\tau_0-\theta_1)^2)t + 
                \frac{b_{23}\varepsilon(\tau_0-\theta_1)}{4C_2}(C_1^2+\varepsilon^2(\tau_0-\theta_1)^2)t^2 + O^3(t).$$
    
            $\eta\lambda(0)$ and $\det \qty(\gamma'\ \hat{c}'',\ 3\hat{c}^{(5)}-10lc^{(4)})(0)$ in the proof of Proposition \ref{prop:DegCSk} is not depended on coordinate. Therefore, it satisfies the conditions (a) and (b) due to Equation \eqref{eq:Cusp} and \eqref{eq:independent}
                $$\eta\lambda(0) = \varepsilon(\tau_0-\theta_1)C_2$$
                $$\det \qty(\gamma'\ \hat{c}'',\ 3\hat{c}^{(5)}-10lc^{(4)})(0)=-b_{05}\varepsilon^7(b_{04}\varepsilon-3)(\tau_0-\theta_1)^7.$$
        \end{proof}
    
        \begin{dfn}\label{def:DegCS1}
            \textbf{The degenerate cuspidal $S_1$ singularity} if the conditions in the proof of Proposition \ref{prop:DegCSk}. without (a) hold.
        \end{dfn}
    
        \begin{rmk}
            The necessary and sufficient condition needs the above calculations for 2 jet, so the computation is straightforward but rather brutal. We just state the result. A parallel surface of cuspidal cross caps is $\mathcal{A}$-equivalent to a degenerate cuspidal $S_1$ if and only if $C_2(\varepsilon)=0$, $b_{05}\not=0$ and the following hold.
                \begin{equation}\label{eq:SNCofCK1}
                    \qty{b_{13}(\tau_0-\theta_1) - \frac{b_{05}}{10}\kappa_0\sin\theta_0}\varepsilon + \frac{b_{05}}{10} \not= 0
                \end{equation}
                
            To obtain $b_{05} \neq 0$, it is sufficient to compute condition (b) in the proof of Proposition \ref{prop:DegCSk}. We set 
                $$c(x)=(kx^3+O^6(x),\varepsilon(\tau_0-\theta_1)x+O^6(x)),$$
            where $k=\frac{\varepsilon^4(\tau_0-\theta_1)^3(3b_{05}(\tau_0-\theta_1)+4(3-b_{04}\varepsilon))}{24(3\varepsilon^2(\tau_0-\theta_1)^2 - (b_{04}\varepsilon - 3)(\varepsilon \kappa_0\sin\theta_0 - 1))}$ and 
                $$l=\frac{3\varepsilon^2(\tau_0-\theta_1)(4b_{13}\varepsilon(\tau_0-\theta_1)+b_{05}(1-\varepsilon\kappa_0\sin\theta_0))}{4(3\varepsilon^2(\tau_0-\theta_1)^2 - (b_{04}\varepsilon - 3)(\varepsilon \kappa_0\sin\theta_0 - 1))},$$
            then 
                \begin{equation}\label{eq:Cusp}
                    \det \qty(\gamma'\ \hat{c}'',\ 3\hat{c}^{(5)}-10lc^{(4)})(0) = -b_{05}\varepsilon^7(b_{04}\varepsilon-3)(\tau_0-\theta_1)^7
                \end{equation}
            This is why the condition (b) if and only if $b_{05}\not=0$ since we are assuming $\tau_0-\theta_1\not=0$. 
    
            There are two distances at which the cuspidal cross cap degenerates. The more degenerate one has already been discussed in Table \ref{tab:configA5}.
        \end{rmk}
    
        \begin{rmk}
            There is a direction in which the parallel surface degenerates only deg. $cS_1$ since one solution satisfied $C_2(\varepsilon)=$ and \eqref{eq:SNCofCK1} exists. This implies parallel surfaces of a frontal have a direction degenerated to a fixed singularity. In fact, according to \cite{FHpc}, the parallel curve of a front degenerates to a fixed singularity.
        \end{rmk}
        
        \begin{rmk}
            Theorem \ref{thm:PSofCCC} corresponds to Proposition \ref{prop:AktDeg} (iv). This shows that the distance squared unfolding is a good tool to study parallel surfaces.
        \end{rmk}
        
        We have to discuss the case where $s$ is the parameter to claim correctly Theorem \ref{thm:PSofCCC}. But in this case, there is no degenerated distance $\varepsilon$.
    
        \begin{prop}\label{prop:TCon}
            A parallel surface of a cuspidal cross cap as written by Fukui's form is $\mathcal{A}$-equivalent to a cuspidal cross cap if and only if $C_2(\varepsilon)\not=0$.
        \end{prop}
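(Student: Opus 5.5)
We prove Proposition \ref{prop:TCon} by redoing the argument of Theorem \ref{thm:PSofCCC} with $s$ as the parameter of the singular locus, i.e.\ solving $\Lam=0$ for $t$. We use the expansion $\Lam = \frac{b_{13}\varepsilon}{2}C_1 s - C_2 t + O^2(s,t)$. The first observation settles one direction immediately. If $C_2(\varepsilon)=0$, then $\eta\lambda(0)=\varepsilon(\tau_0-\theta_1)C_2=0$ by Equation \eqref{eq:independent}. Since $\eta\lambda(0)$ does not depend on the choice of parametrisation of the singular curve, the null vector is tangent to the singular set at the origin, and $\ps$ cannot be a cuspidal cross cap. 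This holds whichever variable parametrises $\Locus$, so the case $C_1=0$ causes no extra trouble.

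For the converse, assume $C_2\neq 0$. The implicit function theorem gives $t=\frac{b_{13}\varepsilon C_1}{2C_2}s+O^2(s)$, so $\Locus(s)=(s,t(s))$, and $\Lam$ is nondegenerate. We then substitute $\Locus$ into the Jacobian $(\ps_s,\ps_t)$, computed as in the proof of Lemma \ref{lem:EFG}, and into the Weingarten matrix $W$ of Lemma \ref{lem:Weingarten}. As before, we take the null vector field $\Null$ from the second row of the Jacobian. Its $\partial_t$-component at the origin is $\varepsilon(\tau_0-\theta_1)\neq0$, so $\Null$ does not vanish.

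Next we compute $\frac{d}{ds}\ps\circ\Locus$, $\nu\circ\Locus=\bm{a}_3+O^1(s)$, and $d\nu_{\Locus}(\Null)=(f_s,f_t)W\Null$, and form
\[
\psi(s)=\det\Bigl(\tfrac{d}{ds}\ps\circ\Locus,\ \nu\circ\Locus,\ d\nu_{\Locus}(\Null)\Bigr).
\]
By the computation already displayed in the proof of Theorem \ref{thm:PSofCCC}, we expect $\psi(0)=0$ and
\[
\psi'(0)=\frac{b_{13}\varepsilon(\tau_0-\theta_1)}{2C_2}\bigl(C_1^2+\varepsilon^2(\tau_0-\theta_1)^2\bigr).
\]
This is nonzero: $b_{13}\neq0$ because $f$ is a cuspidal cross cap (Fact \ref{fact:INV}), $\tau_0-\theta_1\neq0$ and $\varepsilon\neq0$ by assumption, and $C_1^2+\varepsilon^2(\tau_0-\theta_1)^2>0$. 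Together with $\eta\lambda(0)\neq0$ and the frontal property of $\ps$ with unit normal $\Nor=\nu$, the criterion for cuspidal cross caps in \cite{SSk} (non-degeneracy, $\eta$ transverse to the singular set, $\psi(0)=0$, $\psi'(0)\neq0$) shows that $\ps$ is $\mathcal{A}$-equivalent to a cuspidal cross cap.

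The main obstacle is the first-order expansion of $\psi$. It requires the $O^1$ terms of $W\circ\Locus$, including the $\frac1t$ entries. These entries become regular only after multiplying by $(f_s,f_t)$, whose second column is $t\,\bm{a}_2+O^2$. Along $t=\frac{b_{13}\varepsilon C_1}{2C_2}s$ the factors $1/t$ and $t$ must be tracked carefully in $s$. We would compute $(f_s,f_t)W$ first as a matrix that is regular in $(s,t)$, and only then substitute $\Locus$. If $\tau_0-\theta_1=0$ were allowed, $\Null$ would degenerate; the statement is therefore understood under the standing hypothesis $\tau_0-\theta_1\neq0$ of Theorem \ref{thm:PSofCCC}.
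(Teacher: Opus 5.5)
Your treatment of the case $\tau_0-\theta_1\neq0$ is fine, but there it is essentially the $s$-parametrised computation already carried out in the proof of Theorem~\ref{thm:PSofCCC}. The gap is that you then read Proposition~\ref{prop:TCon} as carrying the hypothesis $\tau_0-\theta_1\neq0$. It does not. Unlike Theorem~\ref{thm:PSofCCC}, the proposition is stated without that hypothesis, and its content beyond that theorem is exactly the case $\tau_0-\theta_1=0$; the paper's proof begins by reducing to that case. There every ingredient of your argument fails. The null vector field taken from the second row of the Jacobian vanishes at the origin. Both $\Null\Lam(0)=\varepsilon(\tau_0-\theta_1)C_2$ and your value of $\psi'(0)$ carry the factor $\tau_0-\theta_1$, so they vanish identically and decide neither direction of the equivalence.

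The missing idea is to take the null vector field from the other row. If $\tau_0=\theta_1$, then $C_2=-C_1\bigl(\frac{b_{04}}{3}\varepsilon+1\bigr)$, so $C_2\neq0$ forces $C_1\neq0$. The first row of the Jacobian is $\bigl(C_1+O^1,\ \varepsilon(\tau_0-\theta_1)t+O^2\bigr)$, and it gives $\Null=O^2(t)\,\partial_s+(C_1+O^1(t))\,\partial_t$ along the singular curve. This field is nonzero at the origin, and $\Null\Lam(0)=-C_1C_2\neq0$.

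For $\varepsilon\neq0$, parametrise the singular curve by $t$ through $s=\frac{2C_2}{b_{13}\varepsilon C_1}t+O^2(t)$. Then $\frac{d}{dt}\ps\circ\Locus=\frac{2C_2}{\varepsilon b_{13}}\bm{a}_1+O^1(t)$ and $\nu\circ\Locus=\bm{a}_3+O^2(t)$. Using $C_2=-C_1\bigl(\frac{b_{04}}{3}\varepsilon+1\bigr)$ once more, $d\nu_{\Locus}(\Null)=\frac{C_1}{\varepsilon}t\,\bm{a}_2+O^2(t)$. Hence $\psi(0)=0$ and $\psi'(0)=-\frac{2C_1C_2}{b_{13}\varepsilon^2}\neq0$.

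Conversely, suppose $\tau_0=\theta_1$ and $C_2=0$. If $C_1=0$, then $d\ps(0)=0$. If $C_1\neq0$, then $\Lam=\frac{b_{13}\varepsilon}{2}C_1s+O^2(s,t)$ while $\Null(0)=C_1\partial_t$, so $\Null\Lam(0)=0$. In either case $\ps$ is not a cuspidal cross cap. Adding this case, which is what the paper's proof does, would complete your argument.
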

    
        \begin{proof}
            We can assume that $\tau_0-\theta_1=0$, since the case $\tau_0-\theta_1\not=0$ is treated in Theorem \ref{thm:PSofCCC}. Then, $C_1\not=0$ holds because 
                $$C_2=-\qty(\frac{b04}{3}\varepsilon + 1)(\varepsilon \kappa_0\sin\theta_0 + 1) = -C_1\qty(\frac{b04}{3}\varepsilon + 1)$$
            and $\ker df^\varepsilon=\qty{0=\frac{b_{13}\varepsilon}{2}C_1s-C_2t+O^2(s,t)}$ is non singular at the origin. Thus, we obtain the singular locus as the parameter $t$ and the vector field $\eta^\varepsilon$ as follows:
                $$\begin{array}{rcl}
                    \eta^\varepsilon &=& (\varepsilon(\tau_0-\theta_1)t + O^2(t))\partial_s + \qty(C_1 + \frac{2C_2}{C_1b_{13}}(\tau_0-\theta_1)(\kappa_1\sin\theta_0+\kappa_0\theta_1\cos\theta_0)t + O^2(t))\partial_t\\
                    &=& O^2(t)\partial_s + \qty(C_1 + O^2(t))\partial_t
                \end{array}$$
            For the same way, we compute $d\nu_{\gamma^\varepsilon}\eta^\varepsilon$:
                $$d\nu_{\gamma^\varepsilon}(\eta^\varepsilon) = (f_s,f_t)\cdot W \cdot \eta^\varepsilon = \qty(\begin{array}{c}
                    (\theta_1-\tau_0)t + O^2(t) \\
                    \frac{C_1}{\varepsilon}t + O^2(t) \\
                    O^2(t)
                \end{array}) = \qty(\begin{array}{c}
                    O^2(t) \\
                    \frac{C_1}{\varepsilon}t + O^2(t) \\
                    O^2(t)
                \end{array})$$
            and $\psi$:
                $$\psi(t)=\qty|\begin{array}{ccc}
                     \frac{2C_2}{\varepsilon b_{13}} + O^1(t) & O^2(t) & O^2(t)\\
                        O^1(t) & \frac{C_1}{\varepsilon}t + O^2(t) & O^2(t)\\
                        O^1(t) & O^2(t) & 1+O^2(t)
                \end{array}| = -\frac{2C_1C_2}{b_{13}\varepsilon^2}.$$
        \end{proof}
    
        \begin{cor} \label{cor:PSofcS1}
            A parallel surface of cuspidal $S_1$s as written by Fukui's form is $\mathcal{A}$-equivalent to a cuspidal $S_1$ if and only if $C_2(\varepsilon)\not=0$.
        \end{cor}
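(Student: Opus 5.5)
The plan is to repeat the computations of Theorem \ref{thm:PSofCCC} and Proposition \ref{prop:TCon}, now assuming $b_{03}=b_{13}=0$ and $b_{23}b_{05}\neq 0$ (the cuspidal $S_1$ hypotheses of Proposition \ref{prop:DegCSk}). We then check the three criteria (a), (b), (c) from \cite{SSk} quoted in the proof of Proposition \ref{prop:DegCSk}, each for the parallel surface $f^\varepsilon$.

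\textbf{Step 1: the singular set.} We first recompute $\lambda^\varepsilon$. Since $b_{13}=0$, the linear term $\frac{b_{13}\varepsilon}{2}C_1 s$ drops out, so $\lambda^\varepsilon=-C_2t+O^2(s,t)$. Hence $\lambda^\varepsilon$ is non-degenerate if and only if $C_2\neq0$. In that case the singular set is a graph $t=t(s)$, and we parametrize the singular locus by $s$, exactly as in the second half of the proof of Theorem \ref{thm:PSofCCC}.

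\textbf{Step 2: the null vector field and condition (a).} We take $\eta^\varepsilon$ from a non-vanishing row of the Jacobian $(f^\varepsilon_s,f^\varepsilon_t)$ along the locus. As in Proposition \ref{prop:TCon}, we use the $t$-component $C_1+O(t)$ when $\tau_0-\theta_1=0$ and the second row otherwise. Then $\eta^\varepsilon\lambda^\varepsilon(0)$ is computed as in \eqref{eq:independent}; it is a nonzero multiple of $C_2$. So transversality in (a) also reduces to $C_2\neq0$. Conversely, if $C_2=0$, then $\lambda^\varepsilon$ is degenerate, or the null direction is tangent to the singular set, and $f^\varepsilon$ cannot be a cuspidal $S_1$. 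This gives the ``only if'' direction.

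\textbf{Step 3: condition (c).} We compute $\psi(s)=\det\bigl(\tfrac{d}{ds}f^\varepsilon\circ\gamma^\varepsilon,\ \nu\circ\gamma^\varepsilon,\ d\nu(\eta^\varepsilon)\bigr)$ using $W$ from Lemma \ref{lem:Weingarten}. The displayed expansion of $\psi$ in the proof of Theorem \ref{thm:PSofCCC} has linear coefficient proportional to $b_{13}$ and quadratic coefficient proportional to
\[
\frac{b_{23}\varepsilon(\tau_0-\theta_1)}{4C_2}\bigl(C_1^2+\varepsilon^2(\tau_0-\theta_1)^2\bigr).
\]
With $b_{13}=0$ we get $\psi(0)=\psi'(0)=0$ and $\psi''(0)\neq0$ whenever $b_{23}\neq0$ and $C_2\neq 0$. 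If $\tau_0-\theta_1=0$, this factor vanishes, so we must redo the expansion one order higher in the style of Proposition \ref{prop:TCon}. There we expect the $s^2$-coefficient to be a multiple of $b_{23}C_1C_2$. The proof of Proposition \ref{prop:TCon} shows $C_1\neq0$ in that case.

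\textbf{Step 4: condition (b).} This is the main obstacle. We need a curve $c$ through the origin tangent to $\eta^\varepsilon(0)$ such that $\hat c=f^\varepsilon\circ c$ satisfies $\hat c'(0)=0$ and $\hat c''(0)\neq0$, together with the fifth-order determinant condition. We plan to take the ansatz $c(x)=(kx^3+O^6(x),\,\varepsilon(\tau_0-\theta_1)x+O^6(x))$ from the remark following Definition \ref{def:DegCS1}, with the same $k$ and $l$. Setting $b_{13}=0$ there, we expect
\[
\det\bigl(\gamma',\hat c'',3\hat c^{(5)}-10l\hat c^{(4)}\bigr)(0)=-b_{05}\varepsilon^7(b_{04}\varepsilon-3)(\tau_0-\theta_1)^7 .
\]
This is nonzero since $b_{05}\neq0$, provided $\tau_0-\theta_1\neq0$ and $b_{04}\varepsilon\neq3$.

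The delicate points are the denominators of $k$ and $l$, which are essentially $C_2$ and so are nonzero by assumption, and the residual factors in the determinant. The factor $b_{04}\varepsilon-3$ and the case $\tau_0-\theta_1=0$ will need separate treatment. In the latter case, we would instead use the coordinate-free nature of the determinant, as noted in the proof of Theorem \ref{thm:PSofCCC}, and a curve $c(x)=(O(x^2),C_1x+\dots)$, expecting the determinant to reduce to a nonzero multiple of $b_{05}$ times a power of $C_1$. With (a), (b), (c) verified under $C_2\neq0$, the criterion of \cite{SSk} yields the corollary.
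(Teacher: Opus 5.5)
Your Steps 1--3 follow the paper's own, very brief, argument. With $b_{03}=b_{13}=0$ one gets $\lambda^\varepsilon=\pm C_2t+O^2(s,t)$. This settles non-degeneracy, transversality of $\eta^\varepsilon$, and the ``only if'' direction. Condition (c) is then read off from the $\psi$-expansion in the proof of Theorem \ref{thm:PSofCCC}; the paper's proof stops there and never checks (b).

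\textbf{The gap is your Step 4, and it is more than a loose end.} You import the value \eqref{eq:Cusp} from the remark after Definition \ref{def:DegCS1}, which was derived for parallels of cuspidal cross caps ($b_{13}\neq0$), and you only \emph{expect} it to survive setting $b_{13}=0$. Taken at face value, it would \emph{refute} the ``if'' direction. In the convention of \eqref{eq:C2}, at $\varepsilon=3/b_{04}$ one has $C_2=\varepsilon^2(\tau_0-\theta_1)^2$, which is nonzero whenever $\tau_0\neq\theta_1$. So (b) would fail at a distance where the corollary asserts a cuspidal $S_1$. Deferring this to ``separate treatment'' leaves sufficiency unproved. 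The same holds for your conjectured outcomes of (b) and (c) when $\tau_0=\theta_1$; your guessed form $b_{23}C_1C_2$ in Step 3 is also off, since $C_2$ actually enters as a denominator.

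\textbf{What is needed is a direct computation of (b) for $b_{13}=0$, and it shows the factor $b_{04}\varepsilon-3$ is spurious.}
\begin{itemize}
\item Since $b_{03}=0$ gives $f^\varepsilon_t(0)=0$, we have $\eta^\varepsilon(0)=\partial_t$. Normalise $\eta^\varepsilon$ to have $\partial_t$-component $1$, and take $c(x)=(\sigma_2x^2+\sigma_3x^3,\,x)$.
\item The prefactor $\varepsilon(\tau_0-\theta_1)$ in the paper's $\psi$ comes only from scaling $\eta^\varepsilon$ by $\varepsilon(\tau_0-\theta_1)$. The factor $(\tau_0-\theta_1)^7$ in \eqref{eq:Cusp} comes only from scaling $t$ by the same amount. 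So no case split in $\tau_0-\theta_1$ is needed.
\item The vectors $\hat\gamma'(0)$ and $\hat c''(0)$ are orthogonal to $\bm a_3(0)=\bm\nu(0)$, and $\det(\hat\gamma'(0),\hat c''(0),\bm a_3(0))=\pm C_2$. Hence only the $\bm a_3(0)$-component of $3\hat c^{(5)}-10l\hat c^{(4)}$ matters.
\item Expand $h=\langle f^\varepsilon-f^\varepsilon(0),\bm a_3(0)\rangle$ with weights $(2,1)$ in $(s,t)$ up to weight $5$. Use $\langle\bm\nu,\bm a_3(0)\rangle-1=-\frac{1}{2}|\bm\nu-\bm a_3(0)|^2$, together with one extra term of $\bm\nu(0,t)$, namely $-\frac{b_{05}}{24}t^3\bm a_2$.
\item Fix $\sigma_3$ so that $\hat c'''(0)\parallel\hat c''(0)$; this is possible precisely because $C_2\neq0$. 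The dependence on $\sigma_2$ then cancels, and the determinant equals $\pm3b_{05}C_2$. There is no factor $b_{04}\varepsilon-3$ and no factor $\tau_0-\theta_1$. At $\varepsilon=0$ this is consistent with the proof of Proposition \ref{prop:DegCSk}.
\item With the same normalisation, $d\bm\nu(\eta^\varepsilon)$ along the singular curve equals $s^2$ times a vector orthogonal to $\hat\gamma'$, of length $\frac{|b_{23}|}{4|C_2|}\sqrt{C_1^2+\varepsilon^2(\tau_0-\theta_1)^2}$.
\item Therefore $\psi''(0)=\pm\frac{b_{23}}{2C_2}\bigl(C_1^2+\varepsilon^2(\tau_0-\theta_1)^2\bigr)$. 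The bracket is nonzero, since otherwise $C_2=0$.
\end{itemize}
These two computations give (b) and (c) whenever $C_2\neq0$, uniformly in $\tau_0-\theta_1$, and that is the part your proposal leaves open.
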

    
        \begin{proof}
            $b_{13}=0$ if the map $f$ has a cuspidal $S_1$ singularity. Thus, a parallel surface of a cuspidal $S_1$ is not degenerated since $\lambda^\varepsilon = C_2t +O^2(s,t)$, $\psi(0)=\psi'(0)=0$ and $\psi''(0)\not=0$ in the proof of Proposition \ref{prop:TCon}
        \end{proof}
    
        

        \begin{rmk}
            If a cuspidal cross cap has $E_6$ singularity, i.e., $C_1=C_2=0$. Then, the origin is not nondegenerate point since $\Lam = \frac{b_{13}\varepsilon}{2} C_1 s + C_2 t + O^2(s,t)$
        \end{rmk}

    \subsection{Proof of Lemmata}
        \begin{lem}\label{lem:Lambda}
            Using the Weingarten matrix $W$ of a surface $M$, the signed area density function $\Lam$ can be expressed as follows:
                $$\Lam = \det|I+\varepsilon W|\cdot \det|f_s, f_t, \Nor|.$$
        \end{lem}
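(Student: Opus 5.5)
The identity is essentially a Weingarten-equation computation followed by a determinant factorization. We use the convention of Lemma~\ref{lem:Weingarten}, namely $(\Nor_s,\Nor_t)=-(f_s,f_t)W$, and the normal of the parallel surface is $\Nor$ itself.

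First, differentiate $\ps=f+\varepsilon\Nor$:
\[
(\ps_s,\ps_t)=(f_s,f_t)+\varepsilon(\Nor_s,\Nor_t)=(f_s,f_t)\,(I-\varepsilon W).
\]
The sign in front of $\varepsilon W$ is fixed by the sign convention for $W$ and the orientation of $\Nor$. The paper's statement $I+\varepsilon W$ corresponds to the opposite convention, equivalently to replacing $\varepsilon$ by $-\varepsilon$ as in the discriminant $\bm p=f-\varepsilon\Nor$ of Section~\ref{Sec:3}. I will record this convention explicitly and carry it through.

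Second, append $\Nor$ as a third column. Since $\Nor$ is unchanged, we get the $3\times3$ matrix identity
\[
(\ps_s,\ps_t,\Nor)=(f_s,f_t,\Nor)\begin{pmatrix} I\pm\varepsilon W & 0\\ 0 & 1\end{pmatrix}.
\]
Taking determinants and using multiplicativity gives
\[
\Lam=\det(I\pm\varepsilon W)\cdot\det|f_s,f_t,\Nor|,
\]
which is the claim.

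One subtlety needs care. $W$ has entries with $1/t$ poles by Lemma~\ref{lem:Weingarten}, so it is defined only off $S(f)=\{t=0\}$. There are two ways to handle this.

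The first way is to prove the identity on $\{t\ne0\}$ and then extend. Both sides are smooth there, and we need them to extend smoothly across $t=0$. This holds because $\det|f_s,f_t,\Nor|=t\cdot\det|f_s,f_t/t,\Nor|$ vanishes to first order, which cancels the simple pole of $\det(I\pm\varepsilon W)$; that pole comes only from the second row, as seen in Lemma~\ref{lem:Weingarten}. Both sides then agree by continuity.

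The second way, which is cleaner, avoids the poles entirely. Write $(\Nor_s,\Nor_t)=-(f_s,f_t/t)\,\widetilde W$, where $\widetilde W=\mathrm{diag}(1,t)\,W$ is smooth. This relation holds on $t\ne0$, hence everywhere by density. We also have $(f_s,f_t)=(f_s,f_t/t)\,\mathrm{diag}(1,t)$. Running the same factorization gives
\[
\Lam=\det\bigl(\mathrm{diag}(1,t)\pm\varepsilon\widetilde W\bigr)\det|f_s,f_t/t,\Nor|,
\]
and this equals the stated formula wherever $t\ne0$. This version is the one actually used in the expansion $\Lam=\tfrac{b_{13}\varepsilon}{2}C_1s-C_2t+O^2(s,t)$.

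The only real obstacle is bookkeeping: matching the sign convention for $W$ and $\varepsilon$ with the statement, and justifying the extension across the singular set. The linear algebra itself is immediate.
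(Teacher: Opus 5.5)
Your proposal is correct and follows the same route as the paper: substitute the Weingarten relation $(\Nor_s,\Nor_t)=-(f_s,f_t)W$ into $\det|\ps_s,\ps_t,\Nor|$, then pull the $2\times 2$ factor out of the determinant, which is the paper's identity \eqref{eq:DepMat} phrased as your block-matrix multiplicativity. Your sign remark is justified, since with that convention and $\ps=f+\varepsilon\Nor$ the paper's middle step actually yields $I-\varepsilon W$ rather than $I+\varepsilon W$, and your treatment of the $1/t$ pole of $W$ along $t=0$ is extra care that the paper's proof omits.
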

        \begin{proof}
            From $(\Nor_s, \Nor_t) = -(f_s, f_t)W$,
                $$\begin{array}{ccl}
                     \Lam &=& \det|\ps_s, \ps_t, \Nor|  \\
                     &=& \det|f_s+\varepsilon \Nor_s, f_t+\varepsilon \Nor_t, \Nor| \\
                     &=& \det|(f_s, f_t)(I+\varepsilon W), \Nor| \\
                     &=& \det|I+\varepsilon W|\cdot \det|f_s, f_t, \Nor|.
                \end{array}$$
            Here, the following relation is used.
                \begin{equation}\label{eq:DepMat}
                    \begin{array}{ccc}
                         \left| (v_1, v_2) \left (\begin{array}{cc}
                              a_1& b_1  \\
                              a_2& b_2
                         \end{array} \right ), v_3 \right| &=&| a_1 v_1 + a_2 v_2, b_1 v_1 + b_2 v_2, v_3| \\
                         &=& (a_1b_2 - a_2b_1)|v_1, v_2, v_3|  \\
                         &=& \left |\begin{array}{cc}
                              a_1& b_1  \\
                              a_2& b_2
                         \end{array} \right | |v_1, v_2, v_3|.  \\
                    \end{array}    
                \end{equation}
        \end{proof} 
        
        \begin{lem}\label{lem:LamEx}
            The Taylor expansion of the signed area density function $\lambda^\varepsilon$ of a parallel surface is the following.
                $$\begin{array}{rl}
                     \Lam \\
                     =& \D\varepsilon \frac{b_3}{2}\qty(1+\varepsilon\kappa\sin\theta) + \qty(-\varepsilon^2\kappa\cos\theta\frac{b_3^2}{4} + \qty(1 + \varepsilon\frac{b_4}{3})(1+\varepsilon\kappa\sin\theta) - \varepsilon^2(\tau-\theta'))t + O^2(t) \\
                     =& \D b_{03}* + \qty(\varepsilon\frac{b_{13}}{2}(1+\varepsilon\kappa_0\sin\theta_0) + b_{03}*)s + \qty(\qty(1 + \varepsilon\frac{b_{04}}{3})(1+\varepsilon\kappa_0\sin\theta_0) - \varepsilon^2(\tau_0 - \theta_1) + b_{03}^2*)t + O^2(s,t)
                \end{array}$$
            Here, the omitted coefficient $*$ is computed in the proof below.
            In particular, the singular locus of a parallel surface $\Lam=0$ is a curve through the origin if the original surface has a cuspidal cross cap at the origin, i.e., $b_{03}=0$ and the following hold in the cuspidal cross cap case.
                $$\Lam = \qty(\varepsilon\frac{b_{13}}{2}(1+\varepsilon\kappa_0\sin\theta_0))s + \qty(\qty(1 + \varepsilon\frac{b_{04}}{3})(1+\varepsilon\kappa_0\sin\theta_0) - \varepsilon^2(\tau_0 - \theta_1))t + O^2(s,t)$$
            In addition, in the case for a cuspidal $S_1$, i.e., $b_{03}=b_{13}=0$, $\Lam$ express as the following.
                $$\Lam = \qty(\qty(1 + \varepsilon\frac{b_{04}}{3})(1+\varepsilon\kappa_0\sin\theta_0) - \varepsilon^2(\tau_0 - \theta_1))t + O^2(s,t)$$
        \end{lem}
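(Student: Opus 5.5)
The plan is to combine Lemma \ref{lem:Lambda} with the expansions already obtained in Lemmas \ref{lem:EFG} and \ref{lem:Weingarten}. By Lemma \ref{lem:Lambda},
$$\Lam=\det|I+\varepsilon W|\cdot\det|f_s,f_t,\Nor| .$$
By Lemma \ref{lem:EFG}, $\det|f_s,f_t,\Nor|=|f_s\times f_t| = t\,(1+O^2(t))$. This factor of $t$ should exactly cancel the $1/t$ appearing in the second row of $W$ in Lemma \ref{lem:Weingarten}.

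So I would first write $W=\begin{pmatrix} w_{11}&w_{12}\\ t^{-1}\tilde w_{21}& t^{-1}\tilde w_{22}\end{pmatrix}$, with
$$w_{11}=\kappa\sin\theta-\tfrac{b_3}{2}\kappa\cos\theta\, t+O^2(t),\qquad w_{12}=(\tau-\theta')t+O^2(t),$$
$$\tilde w_{21}=(\tau-\theta')+\tfrac{b_3'}{2}t+O^2(t),\qquad \tilde w_{22}=\tfrac{b_3}{2}+\tfrac{b_4}{3}t+O^2(t).$$
Then
$$t\det(I+\varepsilon W)=(1+\varepsilon w_{11})(t+\varepsilon\tilde w_{22})-\varepsilon^2 w_{12}\tilde w_{21}.$$
Expanding to first order in $t$ (with $s$ kept as a parameter inside the coefficient functions) gives:
\begin{itemize}
\item the constant term $\varepsilon\frac{b_3}{2}(1+\varepsilon\kappa\sin\theta)$;
\item the $t$-coefficient $(1+\varepsilon\kappa\sin\theta)(1+\varepsilon\frac{b_4}{3})-\varepsilon^2\kappa\cos\theta\frac{b_3^2}{4}-\varepsilon^2(\tau-\theta')^2$.
\end{itemize}
The last of these comes from $w_{12}\tilde w_{21}$. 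The cross term $-\varepsilon^2\frac{b_3}{2}\kappa\cos\theta\cdot\frac{b_3}{2}$ must be tracked carefully: the paper's $O^2$ bookkeeping in $W$ hides a $b_3$-dependent first-order term in $w_{11}$, and its product with $\tilde w_{22}$ produces the $b_3^2$ term. The factor $(1+O^2(t))$ from the area density does not affect first order. This establishes the first line of the lemma. Note the lemma writes $\tau-\theta'$ where the computation gives $(\tau-\theta')^2$; I will keep the square, consistent with $C_2$.

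Next I would expand each coefficient function in $s$, using $b_3(s)=b_{03}+b_{13}s+\dots$ and $\kappa\sin\theta=\kappa_0\sin\theta_0+O(s)$, and keep only terms of total degree $\le1$ in $(s,t)$.
\begin{itemize}
\item The $s$-coefficient is $\varepsilon\frac{b_{13}}{2}(1+\varepsilon\kappa_0\sin\theta_0)$ plus terms proportional to $b_{03}$, namely $\varepsilon\frac{b_{03}}{2}\varepsilon(\kappa\sin\theta)'(0)$; these are the entries marked $*$.
\item The $t$-coefficient is evaluated at $s=0$, and the $b_3^2$ term becomes $b_{03}^2*$.
\end{itemize}
Setting $b_{03}=0$ for the cuspidal cross cap (Fact \ref{fact:INV}) kills the constant term and all starred terms, so $\Lam(0)=0$ and the displayed linear form follows. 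Setting further $b_{13}=0$ gives the cuspidal $S_1$ formula (Proposition \ref{prop:DegCSk}).

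The main obstacle is the first-order bookkeeping in $W$. Lemma \ref{lem:Weingarten} only gives $w_{12}$ and the numerators of the second row to the order needed. Because of the $1/t$ factor, a second-order error in $M$ or $N$ contributes at first order after multiplying by $t$. I would therefore recheck that $N$ is known through $t^2$ (it is: $\frac{b_3}{2}t+\frac{b_4}{3}t^2$). I would also confirm that $F=O^4(t)$ and $EG-F^2=t^2(1+O^2)$ do not perturb $\tilde w_{22}$ at order $t$, before assembling the determinant.
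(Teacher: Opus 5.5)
Your proposal is correct and follows the paper's proof essentially step for step: you factor $\lambda^\varepsilon=\det(I+\varepsilon W)\det(f_s,f_t,\bm{\nu})$ via Lemma~\ref{lem:Lambda}, let the factor $t(1+O^2(t))$ from Lemma~\ref{lem:EFG} absorb the $1/t$ in the second row of the Weingarten matrix of Lemma~\ref{lem:Weingarten}, read off the constant and linear terms in $t$, and then Taylor-expand the coefficient functions in $s$ before setting $b_{03}=0$ (and then $b_{13}=0$). You are also right that the $t$-coefficient must contain $-\varepsilon^2(\tau-\theta')^2$ (resp.\ $-\varepsilon^2(\tau_0-\theta_1)^2$), which is what makes it agree with $-C_2(\varepsilon)$; the paper's display drops the square.
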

        \begin{proof}
            We aim to express a Taylor expansion of $\Lam$ in terms of the coefficients of $f$. Then, $\det|E-\varepsilon W|$ and $\det|f_s, f_t, \Nor|$ can be computed as follows, according to Lemma.\ref{lem:EFG} and Lemma. \ref{lem:Weingarten}.
                $$\begin{array}{ccl}
                    \det|I+\varepsilon W| &=& \det\matrixDD
                    {1+\varepsilon\kappa\sin\theta - \varepsilon\frac{b_3}{2}\kappa\cos\theta t + O^2(t)}
                    {\varepsilon(\tau -\theta')t + O^2(t)}
                    {\frac{1}{t}(\varepsilon(\tau-\theta') + \varepsilon b_3'\frac{t}{2} + O^2(t))}
                    {\frac{1}{t}(\varepsilon\frac{b_3}{2} + (1 + \varepsilon\frac{b_4}{3})t + O^2(t))}  \\
                    &=& \D \frac{1}{t}\qty{\varepsilon \frac{b_3}{2}\qty(1+\varepsilon\kappa\sin\theta) + \qty(-\varepsilon^2\kappa\cos\theta\frac{b_3^2}{4} + \qty(1 + \varepsilon\frac{b_4}{3})(1+\varepsilon\kappa\sin\theta) - \varepsilon^2(\tau-\theta'))t + O^2(t)} \\
                    \det|f_s, f_t, \Nor| &=& \langles{\Nor,f_s\times f_t}\ =\ |f_s\times f_t| \ =\ \sqrt{EG-F^2}\\
                    &=&  t(1 + O^2(t))
                \end{array}$$
            Therefore,
                $$\begin{array}{rcl}
                     \Lam &=& \det|E-\varepsilon W|\cdot \det|f_s, f_t, \Nor| \\
                     &=& \D\varepsilon \frac{b_3}{2}\qty(1+\varepsilon\kappa\sin\theta) + \qty(-\varepsilon^2\kappa\cos\theta\frac{b_3^2}{4} + \qty(1 + \varepsilon\frac{b_4}{3})(1+\varepsilon\kappa\sin\theta) - \varepsilon^2(\tau-\theta'))t + O^2(t) \\
                     &=& \D \varepsilon \frac{b_{03}}{2}\qty(1+\varepsilon\kappa_0\sin\theta_0) + \qty{\varepsilon \frac{b_{13}}{2}\qty(1+\varepsilon\kappa_0\sin\theta_0) + \varepsilon \frac{b_{03}}{2}\varepsilon\qty(\kappa_1\sin\theta_0+\kappa_0\theta_1\cos\theta_0)}s \\
                     && \D \hspace{3em} + \qty{-\varepsilon^2\kappa_0\cos\theta_0\frac{b_{03}^2}{4} + \qty(1 + \varepsilon\frac{b_{04}}{3})(1+\varepsilon\kappa_0\sin\theta_0) - \varepsilon^2(\tau_0-\theta_1)}t + O^2(s,t)
                \end{array}$$
        \end{proof}
    
        \begin{lem}
            The Jacobi matrix of a parallel surface of a cuspidal cross cap is the following;
                $$\begin{array}{rcl}
                     (f^\varepsilon_s, f^\varepsilon_t) &=& \qty(\begin{array}{cc}
                          1+\varepsilon\kappa\sin\theta - \varepsilon\frac{b_3}{2}\kappa\cos\theta t + O^2(t) & \varepsilon(\tau -\theta')t + O^2(t) \\
                          \varepsilon(\tau-\theta') + \varepsilon b_3'\frac{t}{2} + O^2(t) & \varepsilon\frac{b_3}{2} + (1 + \varepsilon\frac{b_4}{3})t + O^2(t) \\
                          O^2(t) & O^2(t)
                     \end{array}) \\
                     &=& \qty(\begin{array}{cc}
                          1+\varepsilon\kappa_0\sin\theta_0 +\varepsilon(\kappa_1\sin\theta_0+\kappa_0\theta_1\cos\theta_0)s + O^2(s,t) & \varepsilon(\tau_0 -\theta_1)t + O^2(s,t) \\
                          \varepsilon(\tau_0-\theta_1) + \varepsilon(\tau_1-\theta_2)s + \varepsilon b_{13}\frac{t}{2} + O^2(s,t) & \varepsilon\frac{b_{13}}{2}s + (1 + \varepsilon\frac{b_{04}}{3})t + O^2(s,t) \\
                          O^2(s,t) & O^2(s,t)
                     \end{array})
                \end{array}$$
        \end{lem}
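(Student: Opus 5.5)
The plan is to compute the Jacobian directly from $f^\varepsilon = f + \varepsilon\Nor$ and the Weingarten formula $(\Nor_s,\Nor_t) = -(f_s,f_t)W$, exactly as in the proof of Lemma \ref{lem:Lambda}. This gives
$$(f^\varepsilon_s, f^\varepsilon_t) = (f_s, f_t)(I + \varepsilon W),$$
where the sign convention for $W$ is the one already used in Lemma \ref{lem:LamEx}; there $I+\varepsilon W$ has $(1,1)$ entry $1+\varepsilon\kappa\sin\theta$. With this identity the lemma reduces to multiplying two known expansions.

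First, I take $(f_s,f_t)$ in the frame $(\bm{a}_1,\bm{a}_2,\bm{a}_3)$ from the proof of the first-fundamental-form lemma:
$$f_s = \bigl(1+O^2(t),\, O^3(t),\, O^2(t)\bigr), \qquad f_t = \bigl(0,\, t+O^3(t),\, b_3\tfrac{t^2}{2}+O^3(t)\bigr).$$
Next, I use $I+\varepsilon W$ with the entries from Lemma \ref{lem:Weingarten}, written in the form displayed in the proof of Lemma \ref{lem:LamEx}. Its second row has the factor $\frac{1}{t}$.

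The key observation is that multiplying by $f_t = t\bm{a}_2 + O^2(t)$ cancels this $\frac{1}{t}$. The $\bm{a}_2$-component of the second column is therefore $t\cdot\frac{1}{t}\bigl(\varepsilon\frac{b_3}{2} + (1+\varepsilon\frac{b_4}{3})t + O^2(t)\bigr)$, and the $\bm{a}_2$-component of the first column is $\varepsilon(\tau-\theta') + \varepsilon b_3'\frac{t}{2} + O^2(t)$. The $\bm{a}_1$-row comes only from $f_s$ times the first row of $I+\varepsilon W$. The $\bm{a}_3$-components are $O^2(t)$ because $f_s$ and $f_t$ both have $\bm{a}_3$-parts of order $t^2$. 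This yields the first displayed matrix, with the matrix understood as coordinates in the frame $(\bm{a}_1,\bm{a}_2,\bm{a}_3)$.

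The main obstacle is bookkeeping the orders. I must check that the $O^2(t)$ corrections in $f_s$ and $f_t$, once multiplied by the $\frac{1}{t}$ entries of $W$, contribute only $O^2(t)$ in each entry. This requires $f_t$'s $\bm{a}_2$-part to be $t+O^3(t)$, which holds because $a_3 = 0$ (Lemma \ref{lem:FukuiFormCoeff}), and the $\bm{a}_1$-part of $f_t$ to vanish. I would verify each entry individually.

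For the second expression, I expand each coefficient in $s$:
$$\kappa\sin\theta = \kappa_0\sin\theta_0 + (\kappa_1\sin\theta_0 + \kappa_0\theta_1\cos\theta_0)s + \cdots, \qquad \tau-\theta' = (\tau_0-\theta_1) + (\tau_1-\theta_2)s + \cdots,$$
together with $b_3 = b_{03} + b_{13}s + \cdots$ and $b_4 = b_{04} + \cdots$. I then impose the cuspidal cross cap condition $b_{03}=0$ from Fact \ref{fact:INV}. Terms such as $\varepsilon\frac{b_3}{2}\kappa\cos\theta\, t$ and $\varepsilon b_3'\frac{t}{2} - \varepsilon b_{13}\frac{t}{2}$ are then absorbed into $O^2(s,t)$, and truncating at total degree one gives the stated matrix.
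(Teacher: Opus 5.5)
Your route is the paper's: write $(f^\varepsilon_s,f^\varepsilon_t)=(f_s,f_t)(I+\varepsilon W)$, multiply the known expansions in the frame $(\bm{a}_1,\bm{a}_2,\bm{a}_3)$, then expand in $s$ with $b_{03}=0$. On the $\bm{a}_2$-row you are more careful than the paper: you correctly note that the $\bm{a}_2$-part of $f_t$ must be $t+O^3(t)$ (from $a_3=0$) to recover the $t$-coefficient of the $(2,2)$ entry.

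The step that fails is the $\bm{a}_3$-row. You say it is $O^2(t)$ ``because $f_s$ and $f_t$ both have $\bm{a}_3$-parts of order $t^2$.'' This ignores the same $\frac{1}{t}$ you exploited one line earlier. The $\bm{a}_3$-part $b_3\frac{t^2}{2}$ of $f_t$ is multiplied by the second row of $I+\varepsilon W$. Writing the superscript $(3)$ for the $\bm{a}_3$-component, this gives
\[
(f^\varepsilon_s)^{(3)}=\varepsilon\frac{b_3}{2}(\tau-\theta')\,t+O^2(t),\qquad (f^\varepsilon_t)^{(3)}=\varepsilon\frac{b_3^2}{4}\,t+O^2(t).
\]
These terms are really there. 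Since $\langle f^\varepsilon_s,\Nor\rangle=0$ and $\Nor=\bm{a}_3-\frac{b_3}{2}t\,\bm{a}_2+O^2(t)$, the $\bm{a}_3$-component of $f^\varepsilon_s$ must equal $\frac{b_3}{2}t$ times its $\bm{a}_2$-component, modulo $O^2(t)$. A cuspidal cross cap has $b_3=b_{13}s+\cdots$ with $b_{13}\neq 0$. So the third row of the first display is not $O^2(t)$. The entry-by-entry check you promise would expose this rather than confirm it. The paper's own proof makes the same slip: it multiplies an $O^2(t)$ entry by a $\frac{1}{t}$ entry and records the result as $O^2(t)$.

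The repair is easy and keeps everything that is used later. Retain these two $t$-linear terms in the first display. With $b_{03}=0$ they become $\varepsilon\frac{b_{13}}{2}(\tau_0-\theta_1)st+\cdots$ and $O(s^2t)$, both $O^2(s,t)$. Hence the second display, third row included, is correct as stated.

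One further minor point concerns the sign. From $f^\varepsilon=f+\varepsilon\Nor$ and $(\Nor_s,\Nor_t)=-(f_s,f_t)W$ one gets $I-\varepsilon W$, not $I+\varepsilon W$. The stated entries $1+\varepsilon\kappa\sin\theta$ and $+\varepsilon(\tau-\theta')$ correspond to $f-\varepsilon\Nor$, the parametrization $\bm{p}=f-\varepsilon\Nor$ of the discriminant of $\Phi$. So the convention you need to fix is the one for $f^\varepsilon$, not the one for $W$.
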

        \begin{proof}
            Since the Weingarten formula, the following holds.
                $$\begin{array}{cl}
                     (f^\varepsilon_s,f^\varepsilon_t) \\
                     =& (f_s+\varepsilon\Nor_s,f_t+\varepsilon\Nor_t) \ =\  (f_s,f_t) + \varepsilon(\Nor_s,\Nor_t) \\
                     =& (f_s,f_t) + \varepsilon\qty(-(f_s,f_t)W) \ =\  (f_s,f_t)\qty(I +\varepsilon W)\\
                     =& \qty(\begin{array}{cc}
                          1 + O^2(t) & 0 \\
                          O^2(t) & t + O^2(t) \\
                          O(t)^2 & O^2(t)
                     \end{array})\matrixDD
                    {1+\varepsilon\kappa\sin\theta - \varepsilon\frac{b_3}{2}\kappa\cos\theta t + O^2(t)}
                    {\varepsilon(\tau -\theta')t + O^2(t)}
                    {\frac{1}{t}(\varepsilon(\tau-\theta') + \varepsilon b_3'\frac{t}{2} + O^2(t))}
                    {\frac{1}{t}(\varepsilon\frac{b_3}{2} + (1 + \varepsilon\frac{b_4}{3})t + O^2(t))}  \\
                    =& \qty(\begin{array}{cc}
                          1+\varepsilon\kappa\sin\theta - \varepsilon\frac{b_3}{2}\kappa\cos\theta t + O^2(t) & \varepsilon(\tau -\theta')t + O^2(t) \\
                          \varepsilon(\tau-\theta') + \varepsilon b_3'\frac{t}{2} + O^2(t) & \varepsilon\frac{b_3}{2} + (1 + \varepsilon\frac{b_4}{3})t + O^2(t) \\
                          O^2(t) & O^2(t)
                     \end{array})
                \end{array}$$
            Expanding this expression into a Taylor series in $s$ and $t$, we obtain
                $$(f^\varepsilon_s,f^\varepsilon_t) = \qty(\begin{array}{cc}
                          1+\varepsilon\kappa_0\sin\theta_0 +\varepsilon(\kappa_1\sin\theta_0+\kappa_0\theta_1\cos\theta_0)s + O^2(s,t) & \varepsilon(\tau_0 -\theta_1)t + O^2(s,t) \\
                          \varepsilon(\tau_0-\theta_1) + \varepsilon(\tau_1-\theta_2)s + \varepsilon b_{13}\frac{t}{2} + O^2(s,t) & \varepsilon\frac{b_{13}}{2}s + (1 + \varepsilon\frac{b_{04}}{3})t + O^2(s,t) \\
                          O^2(s,t) & O^2(s,t)
                     \end{array})$$
        \end{proof}



\end{document}